\documentclass{article}

\usepackage{arxiv}
\usepackage[applemac]{inputenc} 		
\usepackage[T1]{fontenc}    		
\usepackage[colorlinks]{hyperref}      
\usepackage{url}            			
\usepackage{amsthm}
\usepackage{booktabs}       		
\usepackage{amsfonts}       		
\usepackage{amsmath}
\usepackage{dsfont}
\usepackage{nicematrix}
\usepackage{nicefrac}       		
\usepackage{microtype}      		
\usepackage{lipsum}				
\usepackage[square,numbers]{natbib}
\usepackage{mathtools}
\usepackage{algorithm}
\usepackage{algorithmicx}
\usepackage{algpseudocode}
\usepackage{graphicx}
\usepackage{indentfirst,latexsym,bm}
\usepackage{subfigure}
\usepackage{amssymb}
\usepackage{xcolor}
\usepackage{comment}
\usepackage{enumitem}
\usepackage{bbm}
\usepackage[subfigure]{tocloft}
\setlength{\cftbeforesecskip}{2pt}  
\setlength{\cftbeforesubsecskip}{1pt}  
\usepackage{pgfplots}

\pgfplotsset{compat=1.18}
\usepackage{tikz}
\usetikzlibrary{arrows.meta,positioning}
\usepgfplotslibrary{fillbetween}
\usetikzlibrary{shapes,decorations}
\usetikzlibrary{fit}
\usetikzlibrary{calc}
\usepackage{comment}
\usepackage{bbding}
\usepackage{pifont}


\newcommand{\xmark}{\color{red}\ding{55}}
\definecolor{lavender}{rgb}{0.9, 0.9, 0.98}
\colorlet{color1}{blue}
\colorlet{color2}{red!50!black}

\definecolor{ivory}{RGB}{218,215,203}

\definecolor{cuhkp}{RGB}{98,56,105} 	
\definecolor{cuhkpl}{RGB}{152,24,147} 	
\definecolor{cuhkb}{RGB}{219,160,1} 	
\definecolor{cuhkbd}{RGB}{178,129,0} 	
\definecolor{cuhkr}{RGB}{88,35,155}  	

\definecolor{blackp}{RGB}{0,0,0} 
\definecolor{redp}{RGB}{255,0,0}
\definecolor{orangep}{RGB}{255,128,0}
\definecolor{brownp}{RGB}{128,77,0}
\definecolor{yellowp}{RGB}{255,230,0}
\definecolor{greenp}{RGB}{128,230,0}
\definecolor{bluep}{RGB}{0,128,255}
\definecolor{purplep}{RGB}{152,24,147}
\definecolor{pinkp}{RGB}{230,0,128}    

\definecolor{turql}{RGB}{53,130,134}
\definecolor{oranl}{RGB}{223,149,86} 
\definecolor{mblue}{RGB}{52,52,246}
\usepackage{hyperref}[6.83]
\hypersetup{
	colorlinks=true,
	frenchlinks=false,
	pdfborder={0 0 0},
	naturalnames=false,
	hypertexnames=false,
	breaklinks,
	allcolors = mblue,
	urlcolor = color2,
}

\RequirePackage[capitalize,nameinlink]{cleveref}[0.19]

\crefname{section}{section}{sections}
\crefname{subsection}{subsection}{subsections}

\Crefname{figure}{Figure}{Figures}
\Crefname{assumption}{Assumption}{Assumptions}

\crefformat{equation}{\textup{#2(#1)#3}}
\crefrangeformat{equation}{\textup{#3(#1)#4--#5(#2)#6}}
\crefmultiformat{equation}{\textup{#2(#1)#3}}{ and \textup{#2(#1)#3}}
{, \textup{#2(#1)#3}}{, and \textup{#2(#1)#3}}
\crefrangemultiformat{equation}{\textup{#3(#1)#4--#5(#2)#6}}%
{ and \textup{#3(#1)#4--#5(#2)#6}}{, \textup{#3(#1)#4--#5(#2)#6}}{, and \textup{#3(#1)#4--#5(#2)#6}}

\Crefformat{equation}{#2Equation~\textup{(#1)}#3}
\Crefrangeformat{equation}{Equations~\textup{#3(#1)#4--#5(#2)#6}}
\Crefmultiformat{equation}{Equations~\textup{#2(#1)#3}}{ and \textup{#2(#1)#3}}
{, \textup{#2(#1)#3}}{, and \textup{#2(#1)#3}}
\Crefrangemultiformat{equation}{Equations~\textup{#3(#1)#4--#5(#2)#6}}%
{ and \textup{#3(#1)#4--#5(#2)#6}}{, \textup{#3(#1)#4--#5(#2)#6}}{, and \textup{#3(#1)#4--#5(#2)#6}}

\crefdefaultlabelformat{#2\textup{#1}#3}

\theoremstyle{plain}
\newtheorem{theorem}{Theorem}[section]
\newtheorem{thm}{Theorem}[section]
\newtheorem{lemma}[thm]{Lemma}
\newtheorem{corollary}[thm]{Corollary}

\newtheorem{remark}[thm]{Remark}
\newtheorem{assumption}[thm]{Assumption}
\theoremstyle{plain}

\newcommand{\mer}{\mathcal{M}}

\newcommand{\R}{\mathbb{R}}
\newcommand{\N}{\mathbb{N}}
\newcommand{\Rn}{\mathbb{R}^d}

\newcommand{\Prob}{\mathbb{P}}
\newcommand{\Exp}{\mathbb{E}}
\newcommand{\vp}{\varphi}

\newcommand{\crit}{\mathrm{crit}}

\newcommand{\sL}{{\sf L}}

\newcommand{\sC}{{\sf C}}
\newcommand{\sD}{{\sf D}}

\newcommand{\sE}{{\sf E}}
\newcommand{\sM}{{\sf M}}

\newcommand{\ti}{\gamma}
\newcommand{\tg}{\Delta}
\newcommand{\tw}{{\sf T}}

\newcommand{\Ti}{\Gamma}

\newcommand{\cO}{\mathcal O}

\newcommand{\cB}{\mathcal B}

\newcommand{\cF}{\mathcal F}
\newcommand{\cE}{\mathcal E}
\newcommand{\cS}{\mathcal{S}}

\newcommand{\cX}{\mathcal{X}}
\newcommand{\cR}{\mathcal{R}}

\newcommand{\sct}[1]{\boldsymbol{#1}}
\newcommand{\se}{\sct{e}}
\newcommand{\sd}{\sct{d}}
\newcommand{\sr}{\sct{r}}

\newcommand{\sV}{\sct{V}}
\newcommand{\sW}{\sct{W}}

\newcommand{\sv}{\sct{v}}
\newcommand{\sg}{\sct{g}}

\newcommand{\scs}{\sct{s}}

\newcommand{\sw}{\sct{w}}
\newcommand{\sx}{\sct{x}}
\newcommand{\sy}{\sct{y}}
\newcommand{\sz}{\sct{z}}

\newcommand{\SGD}{\mathsf{SGD}}
\newcommand{\RR}{\mathsf{RR}}
\newcommand{\SGDM}{\mathsf{SGDM}}
\newcommand{\SHB}{\mathsf{SHB}}

\newcommand{\SNAG}{\mathsf{SNAG}}

\newcommand{\nx}{\tilde{\sct{x}}}
\newcommand{\mparam}{\nu}

\definecolor{purp}{RGB}{152,24,147}
\definecolor{bluep}{RGB}{0,128,255}
\definecolor{redp}{RGB}{255,0,0}
\definecolor{orangep}{RGB}{255,128,0}

\definecolor{OxfordBlue}{rgb}{0,0.106,0.329}   
\definecolor{UMRed}{rgb}{0.73,0.09,0.19}   
\definecolor{CUBrown}{RGB}{152,95,42}   
\definecolor{LightBrown}{RGB}{219,199,181}   
\definecolor{MyBlue}{RGB}{81,168,221}   
\definecolor{LightBlue}{RGB}{229,232,247}   
\definecolor{MyGreen}{RGB}{165,222,228}   
\definecolor{LightGreen}{RGB}{228,245,247} 

\definecolor{McBlack}{RGB}{39,37,31} 
\definecolor{McRed}{RGB}{218,41,28} 
\definecolor{McYellow}{RGB}{255,199,44}

\newcommand{\iprod}[2]{\langle #1, #2 \rangle}

\title{Convergence of SGD with momentum in the nonconvex setting: A time window-based analysis\thanks{{Andre Milzarek was partly supported by the National Natural Science Foundation of China (Foreign Young Scholar Research Fund Project) under Grant No$.$ 12150410304, by the Shenzhen Science and Technology Program (No$.$ RCYX20221008093033010), and by the Guangdong Provincial Key Laboratory of Mathematical Foundations for Artificial Intelligence (2023B1212010001).}}}
\date{} 					

\author{
Junwen Qiu \\
	Industrial Systems Engineering \& Management \\
	National University of Singapore\\ 
    \texttt{jwqiu@nus.edu.sg} \\
    \And
    Bohao Ma \\
	School of Data Science (SDS) \\
	The Chinese University of Hong Kong, Shenzhen \\ 
	\texttt{bohaoma@link.cuhk.edu.cn} \\
    \And
	Andre Milzarek\\
	School of Data Science (SDS) \\
	The Chinese University of Hong Kong, Shenzhen \\ \texttt{andremilzarek@cuhk.edu.cn} \\	
}

\begin{document}
	\maketitle	
    \vspace{-8mm}
\begin{abstract}
The stochastic gradient descent method with momentum (SGDM) is a common approach for solving large-scale and stochastic optimization problems. Despite its popularity, the convergence behavior of SGDM remains less understood in nonconvex scenarios. This is primarily due to the absence of a sufficient descent property and challenges in simultaneously controlling the momentum and stochastic errors in an almost sure sense. To address these challenges, we investigate the behavior of SGDM over specific time windows, rather than examining the descent of consecutive iterates as in traditional studies. This time window-based approach simplifies the convergence analysis and enables us to establish the iterate convergence result for SGDM under the {\L}ojasiewicz property. We further provide local convergence rates which depend on the underlying {\L}ojasiewicz exponent and the utilized step size schemes.
\end{abstract}
	
\textbf{Keywords:} Stochastic gradient descent, momentum,  {\L}ojasiewicz inequality, iterate convergence, convergence rates

\tableofcontents

\section{Introduction}
In this work, we consider the general optimization problem
\begin{equation} \label{eq:prob-exp}
{\min}_{x\in\Rn}~f(x),
\end{equation}
where $f : \Rn \to \R$ is a smooth but potentially \emph{nonconvex} mapping. 
Many problems in stochastic approximation and optimization take the form \eqref{eq:prob-exp}. Here, the objective function $f$ typically corresponds to some data-driven predictive learning or expected risk task, such as, e.g., $f(x)=\Exp_{\xi\sim\Xi}[F(x,\xi)]$, where $(\Xi,{\cal H},\Prob)$ is an underlying probability space, \cite{bottou2018optimization,chung1954stochastic,ghadimi2013stochastic,kushner2003stochastic,robbins1951stochastic}. Modern machine learning and deep learning problems frequently serve as important examples of such applications, \cite{bottou2018optimization,cutkosky2020momentum,krizhevsky2012imagenet,sutskever2013importance,tseng1998incremental}. 

Stochastic gradient descent ($\SGD$) \cite{robbins1951stochastic} is perhaps one of the most classical and successful methods in dealing with \eqref{eq:prob-exp}. In practice, momentum-based techniques are broadly used to enhance the performance of $\SGD$,  \cite{rumelhart1986learning,sutskever2013importance,cutkosky2020momentum,liu2022almost,liu2020improved,sebbouh2021almost}. The stochastic gradient descent method with momentum ($\SGDM$) generates iterates $\{x^k\}_k$ through the following mechanism: Given $x^0\in\Rn$ and $x^1 = x^0$, the update of $\SGDM$ reads as
\begin{equation}
	\label{eq:update-SGDM-1}
 \left[
    \begin{aligned}
        \tilde x^k &= x^k + \mparam(x^k - x^{k-1}),\\
        g^k & = \nabla f(\tilde x^k) - e^k, \\  
        x^{k+1} & = x^k - \alpha_k g^k + \lambda(x^k-x^{k-1}),
    \end{aligned}
    \right.
\end{equation}
where $e^k$ is the stochastic error (or noise), $\alpha_k>0$ is the step size and  $\mparam \geq 0$, $\lambda\in[0,1)$ are the momentum parameters. 
The update rule \eqref{eq:update-SGDM-1} is a general framework encompassing various momentum-based optimization techniques. In the case $\nu=0$, \eqref{eq:update-SGDM-1} reduces to the stochastic variant of Polyak's momentum (or heavy ball) method ($\SHB$), \cite{polyak1964some, sutskever2013importance,cutkosky2020momentum,liu2022almost,LIU202327}, while $\nu = \lambda$ corresponds to the stochastic version of Nesterov's accelerated gradient method ($\SNAG$), \cite{nesterov1983method,sutskever2013importance,LIU202327}. Despite the rich body of work on $\SGD$, the theoretical analysis of its momentum variants \eqref{eq:update-SGDM-1}, particularly in nonconvex settings, remains fairly limited. This paper aims to address this gap by establishing comprehensive asymptotic convergence guarantees for $\SGDM$ in the context of nonconvex optimization.

\subsection{Related Work} 
Momentum-based methods have been reported to speed up the training of neural networks, \cite{tseng1998incremental,krizhevsky2012imagenet,sutskever2013importance,goodfellow2016deep}, 
and most learning libraries, including TensorFlow \cite{abadi2016tensorflow} and PyTorch \cite{paszke2019pytorch}, provide built-in support for such algorithms. Among these, $\SHB$ and $\SNAG$ stand out as widely utilized, standard momentum approaches.

In the following, we discuss existing theoretical results for (stochastic) momentum methods with a focus on the nonconvex setting. In the deterministic case, the first convergence result for Polyak's momentum method for nonconvex problems dates back to \cite{zavriev1993heavy} showing $\nabla f(x^k) \to 0$. 
When $f$ satisfies the {\L}ojasiewicz inequality, iterate convergence (i.e., $x^k\to x^*\in\crit(f)$) of Polyak's momentum and Nesterov's accelerated gradient methods has been established in, e.g., \cite{OchCheBroPoc14,Csaba2021,josz2023convergence}.

In the stochastic case, a universal and common assumption is that the stochastic gradient, $g^k$, is an unbiased approximation of the true gradient, $\nabla f(\tilde x^k)$, and satisfies a bounded variance condition (cf$.$ \Cref{as:sgd}). In this setting, Liu et al$.$, \cite{LIU202327} have proved $\Exp[\|\nabla f(\sx^k)\|] \to 0$ for $\SGDM$.
In the case $\nu=0$, $\lambda\in[0,1)$, Liu and Yuan \cite{liu2022almost,liu2024almost} showed $\nabla f(\sx^k) \to 0$ almost surely (a.s.) for $\SHB$ when $f$ is $\sL$-smooth. Furthermore, if $f$ is convex, the authors in \cite{liu2022almost,liu2024almost} derived $\sx^k\to \sx^*\in\crit(f)$ and verified the asymptotic rate $f(\sx^k)-f^*= \cO(k^{-\frac13+\varepsilon})$, $\varepsilon>0$ a.s..
Several studies have analyzed the convergence of a specific variant of $\SHB$ with specialized momentum parameters, i.e.,
\begin{equation}
\label{eq:special-SHB}
x^{k+1} = x^k - \alpha_k (\nabla f(x^k) -e^k) + \lambda_k(x^k-x^{k-1})\quad \text{with}\quad \lambda_k\to0\quad \text{or} \quad \lambda_k\to1.
\end{equation}
For coercive functions, Gadat et al. \cite{gadat2018stochastic} established almost sure convergence in the sense $\nabla f(\sx^k)\to 0$ when $\lambda_k \to 1$. In the convex case, Sebbouh et al. \cite{sebbouh2021almost} proved almost sure convergence of the iterates, $\sx^k \to \sx^*\in\crit(f)$, if $\lambda_k\to1$. Dereich and Kassing \cite{dereich2024} extended these convergence results to functions satisfying the {\L}ojasiewicz inequality, showing almost sure convergence of the iterates for a momentum approach similar to \eqref{eq:special-SHB}. Specifically, the update in \cite{dereich2024} is based on the gradient $\nabla f(x^{k-1})$ (instead of $\nabla f(x^k)$) and the condition $\lambda_k\to1$ is again required in the analysis. 
Very recently, Liu and Yuan \cite[Theorem 11]{liu2024almost} established global convergence results ($\nabla f(\sx^k) \to 0$ a.s.) for $\SNAG$ (i.e., when $\nu=\lambda$ in \eqref{eq:update-SGDM-1}). To the best of our knowledge, iterate convergence guarantees ($\sx^k\to \sx^*\in\crit(f)$ a.s.) are not known for $\SHB$ and $\SNAG$ when applied to nonconvex objectives. Our work establishes comprehensive convergence guarantees for $\SGDM$ in a general nonconvex setting for arbitrary momentum parameters $\lambda \in [0,1)$ and $\nu \geq 0$. Specifically, our analysis provides both global and iterate convergence results, along with almost sure convergence rates.


\begin{table}[t]
\centering
{\footnotesize

\NiceMatrixOptions{cell-space-limits=2pt}
\begin{NiceTabular}{|p{1.5cm}|p{1.5cm}p{2cm}|p{2cm}p{2cm}p{2cm}|p{1cm}|}%
 [ 
   code-before = 
    \rectanglecolor{lavender!40}{3-2}{18-2}
    \rectanglecolor{lavender!40}{3-4}{18-4}
    \rectanglecolor{lavender!40}{3-6}{18-6}
 ]
\toprule
\Block[c]{2-1}{\textbf{Alg.}}&\Block[c]{1-2}{\textbf{Conditions}} && \Block{1-3}{\textbf{Almost Sure Convergence}}  &&& \Block{2-1}{\textbf{Ref.}}   \\\Hline   & \Block{1-1}{noncvx.${}^{\textcolor{blue}{\text{(a)}}}$} & \Block{1-1}{bounded iter. \emph{not} required} & \Block{1-1}{glob. conv. $\nabla f(\sx^k)\to0$} & \Block{1-1}{iter. conv. $\sx^k \to \sx^*$}  & \Block{1-1}{conv. rate} & \\ \Hline  \Block[l]{1-1}{$\SGD$}& \Block{1-1}{\Checkmark} &\Block{1-1}{\xmark} & \Block{1-1}{\Checkmark} & \Block{1-1}{\Checkmark} & \Block{1-1}{\Checkmark}  & \Block{1-1}{\cite{tadic2015convergence}} \\
\Block[l]{1-1}{$\SHB^{\textcolor{blue}{\text{(b)}}}$} & \Block{1-1}{{\Checkmark}\,/\,{\xmark}} & \Block{1-1}{{\Checkmark}\,/\,{\Checkmark}} &  \Block{1-1}{{\Checkmark}\,/\,{\Checkmark}} & \Block{1-1}{{\xmark}\,/\,{\Checkmark}} & \Block{1-1}{{\xmark}\,/\,{\xmark}}  & \Block{1-1}{\cite{sebbouh2021almost}} \\
\Block[l]{1-1}{$\SHB$/$\SNAG$} & \Block{1-1}{{\Checkmark}\,/\,{\xmark}} & \Block{1-1}{{\Checkmark}\,/\,{\Checkmark}} &  \Block{1-1}{{\Checkmark}\,/\,{\Checkmark}} & \Block{1-1}{{\xmark}\,/\,{\Checkmark}} & \Block{1-1}{{\xmark}\,/\,{\xmark}}  & \Block{1-1}{\cite{liu2022almost,liu2024almost}} \\
\Block[l]{1-1}{$\SHB^{\textcolor{blue}{\text{(c)}}}$} & \Block{1-1}{\Checkmark} & \Block{1-1}{\xmark} &   \Block{1-1}{\Checkmark} & \Block{1-1}{\Checkmark} & \Block{1-1}{\xmark}  & \Block{1-1}{\cite{dereich2024}} \\
\Block[l]{1-1}{$\SGDM^{\textcolor{blue}{\text{(d)}}}$} & \Block{1-1}{\Checkmark} & \Block{1-1}{\Checkmark} &   \Block{1-1}{\textcolor{black!50}{\Checkmark}} & \Block{1-1}{\xmark} & \Block{1-1}{\xmark}  & \Block{1-1}{\cite{LIU202327}} \\
\Block[l]{1-1}{$\SGDM$} & \Block{1-1}{\Checkmark} & \Block{1-1}{\Checkmark} &  \Block{1-1}{\Checkmark} & \Block{1-1}{\Checkmark} & \Block{1-1}{\Checkmark}  & \Block{1-1}{Ours} \\
\bottomrule
\end{NiceTabular}
}
\caption{Overview of convergence results for stochastic (momentum) methods.
\endgraf
\setlength{\parindent}{1ex} \setlength{\baselineskip}{11pt}
\noindent \hspace*{1pt} ${}^{\textcolor{blue}{\text{(a)}}}$ {\footnotesize Results for strongly convex objective functions are not considered.} \\
\hspace*{1pt} ${}^{\textcolor{blue}{\text{(b)}}}$ {\footnotesize This work requires $\{\lambda_k\}_k$ defined in \eqref{eq:special-SHB} to converge to $1$. The result on iterate convergence is based on \emph{convexity} of $f$ (similarly for the entry on $\SHB$/$\SNAG$ by \cite{liu2022almost}).} \\ \hspace*{1pt} ${}^{\textcolor{blue}{\text{(c)}}}$ {\footnotesize The results hold for the special case where the sequence of momentum parameters $\{\lambda_k\}_k$ defined in \eqref{eq:special-SHB} to converge to $1$.}
\\ \hspace*{1pt} ${}^{\textcolor{blue}{\text{(d)}}}$ {\footnotesize  $\Exp[\|\sg^k\|^2]$ is required to be bounded; convergence is shown in expectation $\Exp[\|\nabla f(\sx^k)\|]\to 0$.}
\endgraf}
\vspace{-2ex}
\end{table}

\subsection{Contributions}
The analysis of $\SGDM$ is challenging due to the inherent entanglement of stochastic errors and the momentum mechanism, which complicates the derivation of convergence properties. Even in the deterministic setting, momentum methods typically do not monotonically decrease the objective function across iterations. This effect is amplified in $\SGDM$, where individual trajectories can exhibit vastly different behavior. To address these challenges, we employ a twofold strategy: \\[1mm]
\indent\emph{Time window-based analysis.} We leverage time window techniques (see \Cref{subsec:time win}) 
to effectively estimate the aggregation of stochastic errors (\Cref{lem:err_estimate}) and to provide iterate bounds for $\SGDM$ (\Cref{lem:iterate-bound}) in an almost sure sense. \\[1mm]
\indent\emph{Auxiliary iterates and merit function.} In \Cref{subsec:merit func}, we employ an auxiliary iterate sequence $\{z^k\}_k\subset\Rn$, coupled with a carefully designed merit function $\mer$. This strategic pairing allows disentangling the momentum term from the main dynamics of $\SGDM$. Together with the time window techniques, this enables us to establish an approximate descent-type property (\Cref{lem:approx-desent}) for $\SGDM$. \\[1mm]
\indent These tools allow us to show novel convergence results for momentum methods in the stochastic setting. Our key contributions are summarized as follows:
\begin{itemize}[leftmargin=4.5ex]
    \item In the nonconvex setting, we show the convergence of the function and gradient values for the generic stochastic momentum mechanism --- $\SGDM$. In particular, we prove that $\{f(\sx^k)\}_k$ converges and $\|\nabla f(\sx^k)\|\to0$ almost surely (\Cref{Prop:convergence}).  
    \item Leveraging the {\L}ojasiewicz inequality --- a mild assumption on the local geometry of $f$ --- we establish the almost sure convergence of the iterates generated by $\SGDM$ to a stationary point of $f$. This seems to be the first iterate convergence guarantee for stochastic momentum methods in a nonconvex setting  with momentum parameters being chosen freely from $\lambda \in[0,1)$ and $\nu\geq 0$\footnote{{In practice, the momentum parameter is a fixed number, typically, $\lambda=0.9$ rather than $\lambda_k \to 0$ or $1$ for $\SHB$, see, \cite{sutskever2013importance,rumelhart1986learning} and \cite[Chapter 8.3.2]{goodfellow2016deep}.}}. 
    We prove that the iterates generated by $\SGDM$ converge to some stationary point almost surely without requiring the ubiquitous ``a priori bounded iterate'' assumption, $\limsup_{k\to\infty} \|\sx^k\| < \infty$ a.s.\footnote{{This assumption seems prevalent in the study of stochastic algorithms under the {\L}ojasiewicz inequality \cite{tadic2015convergence,li2023convergence,dereich2024}.}}, or convexity of $f$. Instead we work with the weaker condition $\liminf_{k\to\infty} \|\sx^k\| < \infty$ (a.s.) which is standard in \emph{deterministic} {\L}ojasiewicz inequality-based analyses \cite{absil2005convergence,AttBol09}. This extends the theoretical guarantees for $\SGDM$ to a wider spectrum of optimization problems.
    \item Beyond iterate convergence, we further provide the asymptotic convergence rates of $\SGDM$ for general step sizes (\Cref{thm:convergence-rate}) and for polynomial step sizes (\Cref{coro:local-rate}) --- depending on the underlying {\L}ojasiewicz exponent. The obtained rates improve the existing results in the convex \cite{sebbouh2021almost,liu2022almost} and nonconvex setting \cite{tadic2015convergence} (for $\SGD$) and can match the rates in the strongly convex setting. 
\end{itemize}

\section{Modeling the Stochastic Process and Assumptions}
We assume that there is a sufficiently rich probability space $(\Omega,\mathcal F,\{\mathcal F_k\}_k,\Prob)$ that allows us to model the stochastic components of $\SGDM$ in a unified way. We will use bold letters $\sx^k$, $\sg^k$, etc$.$ to represent random variables while lowercase letters are typically reserved for realizations of a random variable, $x^k = \sx^k(\omega)$, or deterministic parameters. Hence, each approximation of $\nabla f(x^k)$ is understood as a realization of a random vector $\sg^k : \Omega \to \Rn$. For $\lambda \in [0,1)$, $\mparam \geq 0$, $\SGDM$ generates a stochastic process $\{\sx^k\}_k$ via
\begin{equation}
	\label{algo:sgdm}
 \left[
     \begin{aligned}
        \nx^k &= \sx^k + \mparam(\sx^k - \sx^{k-1}),\\
        \sg^k & = \nabla f(\nx^k) - \se^k, \\  
        \sx^{k+1} & = \sx^k - \alpha_k \sg^k + \lambda(\sx^k-\sx^{k-1}),
    \end{aligned}
    \right.
\end{equation}
where $\se^k$ represents the stochastic errors and $\sx^1 = \sx^0= x^0$ are deterministic initial points. We work with the natural filtration ${\cal F}_k:=\sigma(\sx^0,\sx^1,\ldots,\sx^k)$ (i.e., each $\sx^k$ is $\mathcal F_{k}$-measurable). Next, we introduce our main assumptions on the stochastic errors.
\begin{assumption} \label{as:sgd}
	Given the probability space $(\Omega,{\cal F},\{{\cal F}_k\}_k,\Prob)$, the errors $\{\se^k\}_k$ are assumed to satisfy $\Exp[\se^k \mid \mathcal F_{k}]=0$ and $\Exp[\|\se^k\|^2 \mid \mathcal F_{k}] \leq \sigma^2$ (a.s.) for all $k$.
\end{assumption}

This assumption is standard in the analysis of stochastic methods \cite{borkar2009stochastic,bottou2018optimization,cutkosky2020momentum,milzarek2023convergence}. Next, we impose the following assumption on the objective function $f$. 

\begin{assumption} \label{Assumption:1} 
The function $f$ is bounded from below by some $\bar f\in\R$ and the gradient $\nabla f$ is Lipschitz continuous with parameter $\sL>0$.
\end{assumption}

This assumption is ubiquitous among studies on the convergence of first-order methods, see, \cite{AttBolSva13,BolSabTeb14,li2015global,bottou2018optimization}.
Below, we formally introduce the {\L}ojasiewicz inequality, a mild assumption on the local geometry of the objective function. It plays a key role in establishing the iterate convergence of $\SGDM$.

\begin{assumption}
\label{Assumption:2} 
    The function $f$ satisfies the {\L}ojasiewicz inequality on $\crit(f)$, i.e., for every $x^* \in \crit(f)$, there are $\eta\in (0,\infty]$ and  a neighborhood $U(x^*)$ of $x^*$ 
    such that
  \[\hspace{-2ex}\|\nabla{f}(x)\|\geq \sC_f|f(x)-f(x^*)|^\theta \quad \forall~x \in U(x^*) \cap \{x \in \Rn: 0 < |f(x) - f(x^*)| < \eta\},\]
where $\sC_f = \sC_f(x^*) >0$ and $\theta = \theta(x^*) \in[\frac12,1)$ denotes the {\L}ojasiewicz exponent of $f$ at $x^*$.
\end{assumption}

One main feature of the {\L}ojasiewicz inequality is that it holds universally for subanalytic and semialgebraic functions \cite{lojasiewicz1965ensembles,kur98,absil2005convergence}. Moreover, a broad class of problems arising in practice and applications satisfy this property, see \cite[Section 4]{AttBolRedSou10} and \cite[Section 5]{BolSabTeb14}.  

\section{Time Window Techniques}\label{sec:time win}
In this section, we introduce a time window-based analysis technique, which is tailored to facilitate the iterate convergence analysis of stochastic methods. We start with a toy example to motivate why such tool is needed in the nonconvex stochastic setting. We formally define the time window in \Cref{subsec:time win}. The time window introduces a new ``scaling'', which is then used to derive an approximate descent-type property for $\SGDM$ in \Cref{subsec:merit func}.

\subsection{Limitations of Classical Approaches}\label{subsec:toy-example} Conventional convergence analyses under the {\L}ojasiewicz inequality are typically based on a verification of the \emph{finite length property}, i.e., $\sum_{k=1}^\infty \|x^{k+1} - x^{k}\| <\infty$,
\cite{absil2005convergence,AttBolRedSou10,BolSabTeb14,li2015global,li2023convergence,OchCheBroPoc14}. This readily implies that $\{x^k\}_k$ is a Cauchy sequence and thus, the iterates $\{x^k\}_k$ have to converge. However, we will now illustrate that this property generally does \emph{not hold} for stochastic methods.

Let us consider the two-dimensional case and let the objective function $f:\R^2\to\R$ and the stochastic error $\se^k:\Omega \to \R^2$ be given by 
\[
\begin{aligned}
    f(x)=f(x_1,x_2) := \sin(x_1)\quad \text{and} \quad \se^k:=\begin{cases}
        [0,1]^\top & \text{w.p. }50\%,\\
        [0,-1]^\top & \text{w.p. }50\%.\\
    \end{cases}
\end{aligned} 
\]
The function $f$ is Lipschitz smooth and analytic (thus, the {\L}ojasiewicz inequality holds, \cite{lojasiewicz1965ensembles}), and we have $\Exp[\se^k] = 0$, $\Exp[\|\se^k\|^2] = 1$, and $\iprod{\nabla f(x)}{\se^k}=0$ for all $x\in\R^2$ and $k \geq 1$. By the update rule of $\SGD$, i.e., $\sx^{k+1} = \sx^k - \alpha_k (\nabla f(\sx^k) - \se^k)$, we obtain
\[
    \|\sx^{k+1} - \sx^k\| = \alpha_k\|\nabla f(\sx^k) - \se^k\| =   \alpha_k\sqrt{\|\nabla f(\sx^k)\|^2 + \|\se^k\|^2} \geq \alpha_k\|\se^k\| = \alpha_k.
\]
Under the usual condition $\sum_{k=1}^\infty \alpha_k=\infty$, \cite{borkar2009stochastic,bottou2018optimization,chung1954stochastic,robbins1951stochastic}, it then follows $\sum_{k=1}^\infty \|\sx^{k+1} - \sx^k\| = \infty$ a.s.. Consequently, we can not mimic the conventional, deterministic analysis route to prove iterate convergence of stochastic methods in the nonconvex case. \\[1mm]
\noindent\textbf{Discussion.} The finite length of $\{x^k\}_k$ is not necessary for convergence. In fact, it is sufficient to show that $\{x^k\}_k$ is a Cauchy sequence. To achieve this, we introduce an infinite and increasing subsequence $\{\ti_k\}_k\subset \N$ and define $\Ti_k:=\{t\in\N : \ti_k < t \leq \ti_{k+1}\}$. Our goal is to verify that
\begin{equation*}
    {\sum}_{k=1}^\infty \|x^{\ti_{k+1}} - x^{\ti_{k}}\|<\infty \quad \text{and} \quad {\max}_{t\in\Ti_k} \|x^{t} - x^{\ti_k}\| \to 0\quad \text{as\quad $k\to\infty$}.
\end{equation*}
Hence, for any given $\varepsilon>0$, there exists $k_0 \in \N$ such that for all $k \geq k_0$, it holds that ${\sum}_{t=k}^\infty \|x^{\ti_{t+1}} - x^{\ti_{t}}\|<\frac{\varepsilon}{3}$ and ${\max}_{t\in\Ti_k} \|x^{t} - x^{\ti_k}\| < \frac{\varepsilon}{3}$. Furthermore, for all $n>m \geq \gamma_{k_0}$, there are integers $k_2\geq k_1 \geq k_0$ such that $m\in\Ti_{k_1}$ and $n\in\Ti_{k_2}$. Thus, we obtain
\[
\begin{aligned}
    \|x^n - x^m\| &\leq \|x^n - x^{\ti_{k_2}}\| + \|x^{\ti_{k_2}} - x^{\ti_{k_1}}\| + \|x^m - x^{\ti_{k_1}}\|  \\
    & \leq \max_{t\in\Ti_{k_1}} \|x^{t} - x^{\ti_{k_1}}\| + \max_{t\in\Ti_{k_2}} \|x^{t} - x^{\ti_{k_2}}\| + {\sum}_{t=k_1}^\infty \|x^{\ti_{t+1}} - x^{\ti_{t}}\| < \varepsilon,
\end{aligned}
\]
which indicates that $\{x^k\}_k$ is a converging Cauchy sequence. The specific construction of $\{\gamma_k\}_k$ is tied to ``time windows'' and will be presented in the following subsection. The proof strategy sketched here will be formally explored in \Cref{sec:convergence analysis}. 

\subsection{The Time Window}\label{subsec:time win}
Inspired by \cite{tadic2015convergence} and classical stochastic approximation literature \cite{borkar2009stochastic,kushner2003stochastic,ljung1977analysis}, we study the behavior of $\SGDM$ using the natural time scales \[\tg_{k,k}=0 \quad \text{and} \quad \tg_{k,n} = {\sum}_{i=k}^{n-1}\alpha_i \quad \text{for $k<n$}.
\] 
Specifically, similar to \cite{tadic2015convergence}, let us define the mapping \[ \varpi:\N\times\R_+ \to \N, \quad \varpi(k,\tw):=\max \left\{k+1,\sup\{n \geq k: \tg_{k,n} \leq \tw\} \right\}. \] 
Here, $\tw\in\R_+$ is referred to as a \textit{time window} and the associated \textit{time indices} $\{\ti_k\}_k$ are defined recursively via: 
\[\ti_1 = 1 \quad \text{and} \quad  \ti_{k+1}:=\varpi(\ti_k,\tw) \quad \text{for $k\geq 1$.}\]
Based on the time window and indices, we define the collection $\Ti_k$ of indices within the $k$-th time window as $\Ti_k:=\{t\in\N: \ti_k < t \leq \ti_{k+1}\}$.

\begin{figure}[t]
\centering
\begin{tikzpicture}[scale = .95]
\begin{axis}[
 at={(0,0)},
   width=18.8cm,
    height=4cm, 
    domain=0.8:11.2,
    samples=100,
    ymin=-0.5, ymax=0.82,
    xmin=0.8, xmax=11.2,
    legend pos=north east,
    ytick={1},
    xtick={1, 2, 3, 4, 5, 6, 7, 8, 9, 10, 11},
    xticklabels={$\alpha_1$,$\alpha_2$,$\alpha_3$,$\alpha_4$,$\alpha_5$,$\alpha_6$,$\alpha_7$,$\alpha_8$,$\alpha_9$,$\alpha_{10}$,$\alpha_{11}$},
    restrict y to domain=-0.5:1,
   tick align=outside,
    tickpos=left, 
    tick style={draw=black!50!, line cap=round, major tick length=4pt, minor tick length=2pt,  thick}, 
    axis line style={draw=black!50,  thick}, 
    every axis plot/.append style={thick},
    legend style={at={(.98,.94)}, draw=black, thick, font=\footnotesize},
    axis background/.style={fill=red!0},
    every axis/.append style={font=\color{black}},
]
\addplot[opacity=0.8, black, line width=2pt] {1/x};
    
\addplot[green!60!blue, mark=*, only marks, mark size=1.5pt] coordinates {(1,1)};
\addplot[green!60!blue, mark=*, only marks, mark size=1.5pt] coordinates {(2,1/2)};
\addplot[green!60!blue, mark=*, only marks, mark size=1.5pt] coordinates {(3,1/3)};
\addplot[green!60!blue, mark=*, only marks, mark size=1.5pt] coordinates {(4,1/4)};
\addplot[green!60!blue, mark=*, only marks, mark size=1.5pt] coordinates {(5,1/5)};
\addplot[green!60!blue, mark=*, only marks, mark size=1.5pt] coordinates {(6,1/6)};
\addplot[green!60!blue, mark=*, only marks, mark size=1.5pt] coordinates {(7,1/7)};
\addplot[green!60!blue, mark=*, only marks, mark size=1.5pt] coordinates {(8,1/8)};
\addplot[green!60!blue, mark=*, only marks, mark size=1.5pt] coordinates {(9,1/9)};
\addplot[green!60!blue, mark=*, only marks, mark size=1.5pt] coordinates {(11,1/11)};

\draw[black!10, line width=5mm] (1,-0.3) -- (2,-0.3);
\node at (1.5,-0.3) {\textcolor{blue}{\footnotesize $\tg_{\ti_1,\ti_2}\approx \tw$}};
\draw[black!10, line width=5mm] (2,-0.3) -- (4,-0.3);
\node at (3,-0.3) {\textcolor{blue}{\footnotesize $\tg_{\ti_2,\ti_3}\approx \tw$}};
\draw[black!10, line width=5mm] (4,-0.3) -- (10,-0.3);
\node at (7,-0.3) {\textcolor{blue}{\footnotesize $\tg_{\ti_3,\ti_4}\approx \tw$}};
\draw[black!10, line width=5mm] (10,-0.3) -- (12,-0.3);

\fill[white] (0.97,-0.45) rectangle (1.03,-0.1);
\fill[white] (1.97,-0.45) rectangle (2.03,-0.1);
\fill[white] (3.97,-0.45) rectangle (4.03,-0.1);
\fill[white] (9.97,-0.45) rectangle (10.03,-0.1);

\draw[green!60!blue,densely dotted] (1,1) -- (1,-0.5); 
\draw[green!60!blue,densely dotted] (2,1/2) -- (2,-0.5); 
\draw[green!60!blue,densely dotted] (4,1/4) -- (4,-0.5); 
\draw[green!60!blue,densely dotted] (10,1/10) -- (10,-0.5); 

\node[draw=green!60!blue,fill=white] at (1,0.01) {\textcolor{black}{\footnotesize $\ti_1$}};
\node[draw=green!60!blue,fill=white] at (2,0.01) {\textcolor{black}{\footnotesize $\ti_2$}};
\node[draw=green!60!blue,fill=white] at (4,0.01) {\textcolor{black}{\footnotesize $\ti_3$}};
\node[draw=green!60!blue,fill=white] at (10,0.01) {\textcolor{black}{\footnotesize $\ti_4$}};

\end{axis}
\end{tikzpicture}
\caption{Time window and indices. Illustration for $\alpha_k = \frac1k$ and $\tw = 1$.}
    \label{fig:enter-label}
\end{figure}
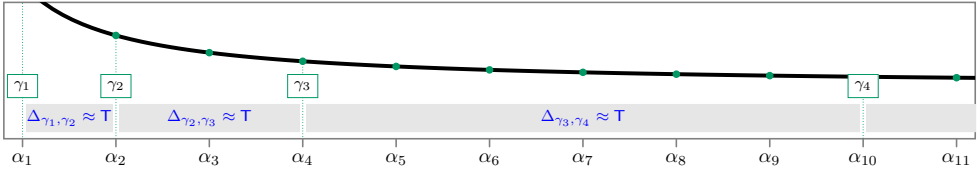

We now provide a connection between the step sizes $\{\alpha_k\}_k$ and the time window. 
\begin{lemma}\label{lem:time-length}
Assume that $\{\alpha_k\}_k$ satisfies $\lim_{k\to\infty} \alpha_k =0$ and $\sum_{k=1}^\infty \alpha_k = \infty$. Then, for any given time window $\tw>0$ and $\delta\in[0,1)$, there exists $K_\delta \in \N$ such that 
$\delta \tw \leq \tg_{\ti_k,\ti_{k+1}} \leq \tw$ for all $k \geq K_\delta$.
\end{lemma}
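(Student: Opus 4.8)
The plan is to read off both inequalities from the (near-)maximality encoded in the definition of $\varpi$, using the two standing hypotheses on the step sizes in complementary roles: $\sum_k \alpha_k = \infty$ guarantees that the relevant suprema are finite, while $\alpha_k \to 0$ controls the ``overshoot'' and forces the supremum branch of $\varpi$ to dominate its forced-increment branch. First I would record that, since $\ti_{k+1} = \varpi(\ti_k,\tw) \geq \ti_k + 1$, the sequence $\{\ti_k\}_k$ is strictly increasing and hence $\ti_k \to \infty$. Using $\alpha_k \to 0$ together with $(1-\delta)\tw > 0$, I fix $N \in \N$ with $\alpha_k \leq (1-\delta)\tw \,(\leq \tw)$ for all $k \geq N$, and then choose $K_\delta$ so that $\ti_k \geq N$ for all $k \geq K_\delta$.

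Next, fix $k \geq K_\delta$ and set $n^* := \sup\{n \geq \ti_k : \tg_{\ti_k,n} \leq \tw\}$. Because $\sum_i \alpha_i = \infty$, we have $\tg_{\ti_k,n} \to \infty$ as $n \to \infty$, so $n^*$ is finite and (being a supremum over integers of a nonempty bounded set containing $\ti_k$) the defining inequality $\tg_{\ti_k,n^*} \leq \tw$ is attained. Since $\ti_k \geq N$ gives $\tg_{\ti_k,\ti_k+1} = \alpha_{\ti_k} \leq \tw$, we moreover have $n^* \geq \ti_k + 1$, whence $\ti_{k+1} = \max\{\ti_k+1, n^*\} = n^*$. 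The upper bound is then immediate: $\tg_{\ti_k,\ti_{k+1}} = \tg_{\ti_k,n^*} \leq \tw$.

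For the lower bound, maximality of $n^*$ yields $\tg_{\ti_k,\ti_{k+1}+1} > \tw$, and since $\ti_{k+1} > \ti_k \geq N$ implies $\alpha_{\ti_{k+1}} \leq (1-\delta)\tw$, I conclude
\[
\tg_{\ti_k,\ti_{k+1}} = \tg_{\ti_k,\ti_{k+1}+1} - \alpha_{\ti_{k+1}} > \tw - (1-\delta)\tw = \delta\tw .
\]
Together with the previous paragraph this establishes $\delta\tw \leq \tg_{\ti_k,\ti_{k+1}} \leq \tw$ for all $k \geq K_\delta$.

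The only genuinely delicate point is the interaction between the two branches of the $\max$ in the definition of $\varpi$: a priori $\ti_{k+1}$ could equal the forced increment $\ti_k+1$ rather than $n^*$, in which case $\tg_{\ti_k,\ti_{k+1}} = \alpha_{\ti_k}$ might exceed $\tw$ and the upper bound would fail. Showing that this degenerate case does not occur eventually is exactly where $\alpha_k \to 0$ is needed (beyond its role in the overshoot estimate for the lower bound), through the inequality $\alpha_{\ti_k} \leq \tw$ valid for large $k$; the finiteness of $n^*$ is where $\sum_k\alpha_k = \infty$ is needed, and the remainder is routine bookkeeping on the indices.
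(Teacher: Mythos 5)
Your proof is correct and follows essentially the same route as the paper's: fix $K$ so that $\alpha_k \leq (1-\delta)\tw$ beyond index $K$, observe that the supremum branch of $\varpi$ then takes over (giving the upper bound), and use near-maximality of $\ti_{k+1}$ to get $\tg_{\ti_k,\ti_{k+1}} \geq \tw - \alpha_{\ti_{k+1}} \geq \delta\tw$. You merely spell out a few points the paper leaves implicit (finiteness and attainment of the supremum via $\sum_k \alpha_k = \infty$, and the bookkeeping for why the forced-increment branch $\ti_k+1$ is eventually inactive).
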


The proof of \Cref{lem:time-length} is elementary. Due to $\alpha_k \to 0$, there is $K\in\N$ such that $\alpha_k \leq (1-\delta)\tw \leq \tw$ for all $k\geq K$. This implies $\varpi(k,\tw) = \sup\{n \geq k: \tg_{k,n} \leq \tw\}$ for all $k \geq K$ and $\tg_{\ti_k,\ti_{k+1}} \leq \tw$ for all $k$ with $\ti_k \geq K$. By the optimality of $\ti_{k+1} = \varpi(\ti_k,\tw)$, we can further infer $ \tg_{\ti_k,\ti_{k+1}}  \geq \tw - \alpha_{\ti_{k+1}} \geq \delta \tw$ for all $k$ and $\ti_k$ satisfying $\ti_k \geq K$. Here, let us also refer to the proof of \cite[Lemma 6.2]{tadic2015convergence}.

The time window-based formalism will allow us to control certain aggregated error terms that are related to the stochastic errors $\{\se^k\}_k$. \\[1mm]
\noindent\textbf{Stochastic Error Estimates.} We consider step sizes $\{\alpha_k\}_k$ of the form 
\begin{equation}
    \label{step size-1}
  \{\alpha_k\}_k \subset \R_{++}\;\text{is non-increasing},\quad 
    {\sum}_{k=1}^\infty \alpha_k = \infty, \quad  {\sum}_{k=1}^\infty \alpha_k^2 \beta_k^2 <\infty,
\end{equation}
for some non-decreasing sequence $\{\beta_k\}_k \subset \R_{++}$. The condition \eqref{step size-1} can be satisfied by, for instance, polynomial step sizes  $\alpha_k \sim 1/k^\gamma$, $\gamma\in(\frac12,1]$, with $\beta_k \equiv 1$.

Based on the time window $\tw$ and the indices $\{\ti_k\}_k$, we introduce the aggregated error $\scs_k$ and the associated event $\cS$:
\begin{equation}
    \label{eq:def-stochastic err}
    \scs_k:=\max_{t\in\Ti_k} \Big\| {\sum}_{i=\ti_k}^{t-1} \alpha_i \se^i \Big\| \quad \text{and} \quad \cS := \left\{\omega \in \Omega: {\sum}_{k=1}^{\infty}\,\beta_{\ti_k}^2\scs_k^2(\omega) < \infty\right\},
\end{equation}
where $\Ti_k =\{t\in\N : \ti_k < t \leq \ti_{k+1}\}$. Next, we provide an almost sure bound for the aggregated error terms $\{\scs_k\}_k$ by showing that the event $\cS$ occurs with probability 1. The proof of \Cref{lem:err_estimate} is based on \cite[Lemma 6.1]{tadic2015convergence} and is presented in \Cref{proof:martingale}.
\begin{lemma}[Error estimate]
	\label{lem:err_estimate}
	Let \Cref{as:sgd} hold and suppose that $\{\alpha_k\}_k$ satisfies the condition \eqref{step size-1}. It then holds that $\Prob(\cS)=1$.
\end{lemma}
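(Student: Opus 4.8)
The plan is to prove the much stronger statement that the nonnegative random variable $S := {\sum}_{k=1}^\infty \beta_{\ti_k}^2 \scs_k^2$ has \emph{finite expectation}; since an integrable nonnegative random variable is finite almost surely, this immediately gives $S<\infty$ a.s., i.e.\ $\Prob(\cS)=1$. The crucial structural observation enabling this reduction is that the time indices $\{\ti_k\}_k$ are generated purely from the deterministic step sizes $\{\alpha_k\}_k$ and the fixed time window $\tw$. Hence the windows $\Ti_k$, their endpoints, and the weights $\beta_{\ti_k}$ are all deterministic, which decouples the window combinatorics from the randomness and lets me estimate $\Exp[\scs_k^2]$ separately on each window before summing.

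First I would fix $k$ and, for $t\in\{\ti_k,\ti_k+1,\dots,\ti_{k+1}\}$, consider the partial sums $M_t := {\sum}_{i=\ti_k}^{t-1}\alpha_i\se^i$ (with $M_{\ti_k}=0$). Using the model \eqref{algo:sgdm}, each $\se^i$ is $\mathcal F_{i+1}$-measurable, so $M_t$ is $\mathcal F_t$-measurable, and $\Exp[M_{t+1}-M_t\mid\mathcal F_t]=\alpha_t\,\Exp[\se^t\mid\mathcal F_t]=0$ by \Cref{as:sgd}. Thus $\{M_t\}$ is a martingale and $\|M_t\|$ is a nonnegative submartingale, to which I apply Doob's $L^2$ maximal inequality over the window to obtain
\[
\Exp\big[\scs_k^2\big]=\Exp\Big[\max_{t\in\Ti_k}\|M_t\|^2\Big]\leq 4\,\Exp\Big[\Big\|{\sum}_{i=\ti_k}^{\ti_{k+1}-1}\alpha_i\se^i\Big\|^2\Big].
\]
The maximum over the window is precisely where the maximal inequality is indispensable: without it one controls only the terminal term, not the running maximum that defines $\scs_k$.

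Next I would expand the terminal second moment via the conditional orthogonality of the increments. For $i<j$ one has $\Exp[\langle\se^i,\se^j\rangle]=\Exp[\langle\se^i,\Exp[\se^j\mid\mathcal F_j]\rangle]=0$, since $\se^i$ is $\mathcal F_j$-measurable, so all cross terms vanish and $\Exp[\|{\sum}_{i=\ti_k}^{\ti_{k+1}-1}\alpha_i\se^i\|^2]={\sum}_{i=\ti_k}^{\ti_{k+1}-1}\alpha_i^2\,\Exp[\|\se^i\|^2]\leq\sigma^2{\sum}_{i=\ti_k}^{\ti_{k+1}-1}\alpha_i^2$. Multiplying by $\beta_{\ti_k}^2$ and exploiting that $\{\beta_k\}_k$ is non-decreasing, so $\beta_{\ti_k}\leq\beta_i$ for every $i\geq\ti_k$, I can absorb the weight into the summand: $\beta_{\ti_k}^2{\sum}_{i=\ti_k}^{\ti_{k+1}-1}\alpha_i^2\leq{\sum}_{i=\ti_k}^{\ti_{k+1}-1}\beta_i^2\alpha_i^2$.

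Finally, because $\ti_1=1$ and the index blocks $\{\ti_k,\dots,\ti_{k+1}-1\}$ partition $\N$, summing over $k$ and invoking Tonelli's theorem yields
\[
\Exp[S]={\sum}_{k=1}^\infty\Exp\big[\beta_{\ti_k}^2\scs_k^2\big]\leq 4\sigma^2{\sum}_{k=1}^\infty{\sum}_{i=\ti_k}^{\ti_{k+1}-1}\beta_i^2\alpha_i^2=4\sigma^2{\sum}_{i=1}^\infty\alpha_i^2\beta_i^2<\infty,
\]
where the final bound is exactly the summability condition in \eqref{step size-1}. Hence $S<\infty$ almost surely and $\Prob(\cS)=1$. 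I expect the only genuinely delicate points to be the verification of the measurability/martingale structure underpinning both Doob's inequality and the orthogonality step, together with the correct vector-valued form of Doob's $L^2$ maximal inequality applied to $\|M_t\|$; the remaining manipulations (Pythagoras for the increments, monotonicity of $\{\beta_k\}_k$, and telescoping the partition of $\N$) are routine.
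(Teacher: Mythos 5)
Your proof is correct, and it reaches the same quantitative bound $\Exp\big[\sum_k \beta_{\ti_k}^2\scs_k^2\big] \le C\sigma^2\sum_i \alpha_i^2\beta_i^2$ as the paper, but by a slightly more direct route. The paper builds the window martingale from the \emph{weighted} increments $\alpha_i\beta_i\se^i$, applies the Burkholder--Davis--Gundy inequality to the resulting maximum $\bar\scs_k$, and then must recover the unweighted quantity $\scs_k$ via an Abel summation identity, using the monotonicity of $\{\beta_k\}_k$ to telescope the coefficients and conclude $\scs_k \le \beta_{\ti_k}^{-1}\bar\scs_k$. You instead work with the unweighted martingale $M_t=\sum_{i=\ti_k}^{t-1}\alpha_i\se^i$ directly, control $\scs_k$ by Doob's $L^2$ maximal inequality (which for $p=2$ plays exactly the role BDG plays in the paper, with the quadratic-variation step replaced by your conditional-orthogonality expansion), and then inject the weight afterwards via $\beta_{\ti_k}\le\beta_i$ for $i\ge\ti_k$. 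This uses the monotonicity of $\{\beta_k\}_k$ in a cleaner place and dispenses with the summation-by-parts manipulation entirely; the trade-off is negligible, since both arguments need the same hypotheses (the martingale structure from \Cref{as:sgd}, the measurability of $\se^i$ with respect to $\mathcal F_{i+1}$, monotone $\{\beta_k\}_k$, and the summability in \eqref{step size-1}). Your closing remarks correctly identify the only delicate points, and the Tonelli/partition step at the end is sound because $\ti_1=1$ and the blocks $\{\ti_k,\dots,\ti_{k+1}-1\}$ are consecutive and nonempty.
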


Note that \Cref{lem:err_estimate} provides an upper bound for the aggregated errors and implies $\scs_k = o(\beta_{\ti_k}^{-1})$ a.s.. Since $\{\beta_k\}_k \subseteq \R_{++}$ is assumed to be non-decreasing, we can further infer $\sum_{k=1}^\infty \scs_k^2 < \infty$ a.s..  

\subsection{Iterate Bounds and Descent-type Property} \label{subsec:merit func}

To study the convergence of $\SGDM$, we introduce the following auxiliary sequence $\{\sz^k\}_k$:
\begin{equation}
    \label{def:z}
    \sz^k := \frac{1}{1-\lambda} \sx^k - \frac{\lambda }{1-\lambda} \sx^{k-1}\quad \text{for all} \; k \geq 1.
\end{equation}
Auxiliary sequences and interpolations of this form are frequently used in the analysis of momentum methods, see, e.g., \cite{ghadimi2015global,liu2020improved}. Moreover, the definition \eqref{def:z} directly implies:
\begin{equation}
    \label{eq:lem-iter-bound-z-x}
    \sz^k-\sx^k = \frac{\lambda}{1-\lambda}(\sx^k-\sx^{k-1})\quad \text{and} \quad \sz^{k+1} = \sz^k - \frac{\alpha_k}{1-\lambda} \sg^k\quad \text{for all} \; k \geq 1. 
\end{equation}
We further define the stochastic process $\{\sd_k\}_k$:
\begin{equation}
    \label{def:d}
    \sd_k = \max\{{\max}_{\ell\in\Ti_k}\|\sx^\ell-\sx^{\ti_k}\|,\;{\max}_{\ell\in\Ti_k}\|\sz^\ell-\sz^{\ti_k}\|\}.
\end{equation}
Next, we provide several almost sure iterate bounds for terms involving the sequences $\{\sx^k\}_k$, $\{\sz^k\}_k$, and $\{\scs^k\}_k$. The proof of \Cref{lem:iterate-bound} is deferred to \cref{Proof:lem:iterate-bound}.

\begin{lemma}[Iterate bounds]\label{lem:iterate-bound}
    Let \Cref{as:sgd,Assumption:1} hold and let $\{\sx^k\}_k$ be generated by $\SGDM$ with $\lambda\in[0,1)$, $\mparam \geq 0$, and $\{\alpha_k\}_k$ satisfying \eqref{step size-1}. For every time window $\tw\in(0,\frac{(1-\lambda)^2}{20\sL(1+2\mparam)}]$ and its associated time indices $\{\ti_k\}_k$, there exists $K_\tw \geq 1$ such that the following bounds hold almost surely for all $k\geq K_\tw$:
    \begin{equation}
        \label{eq:tedious-01-later}
        \sd_k^2\leq  \frac{3}{2}\|\sz^{\ti_k} - \sx^{\ti_k}\|^2 + \frac{15}{(1-\lambda)^2}[\tw^2 \|\nabla f(\sz^{\ti_k})\|^2 + \scs_{k}^2],\quad \text{and}
    \end{equation}
    \begin{equation}
        \label{eq:tedious-02-later} \|\sz^{\ti_{k+1}}-\sx^{\ti_{k+1}}\|^2 \leq \frac{1+\lambda}{2}\|\sz^{\ti_k} - \sx^{\ti_k}\|^2 + \frac{8}{(1-\lambda)^3}[\tw^2\|\nabla f(\sz^{\ti_k})\|^2 + 4\scs_k^2].
    \end{equation}
\end{lemma}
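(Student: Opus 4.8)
The plan is to convert both displacements appearing in $\sd_k$ into gradient-plus-noise sums anchored at the window start $\ti_k$, and then close a self-referential estimate whose small coefficient is supplied by the restriction on $\tw$. The natural starting point is the clean recursion \eqref{eq:lem-iter-bound-z-x}: telescoping $\sz^{t}-\sz^{\ti_k}=-\tfrac{1}{1-\lambda}\sum_{i=\ti_k}^{t-1}\alpha_i\sg^i$ and substituting $\sg^i=\nabla f(\nx^i)-\se^i$ splits the $\sz$-displacement into a gradient sum and a noise sum, where for $t\in\Ti_k$ the noise sum is exactly what $\scs_k$ controls, so $\|\sum_{i=\ti_k}^{t-1}\alpha_i\se^i\|\le\scs_k$ for free. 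For the $\sx$-displacement I would first record the increment recursion $\sx^{i+1}-\sx^{i}=\lambda(\sx^{i}-\sx^{i-1})+(1-\lambda)(\sz^{i+1}-\sz^{i})$, i.e. the increments of $\sx$ form an exponential moving average of those of $\sz$. Unrolling from $\ti_k$ and summing yields $\sx^{t}-\sx^{\ti_k}=r_k-\tfrac{1}{1-\lambda}\sum_{i=\ti_k}^{t-1}(1-\lambda^{t-i})\alpha_i\sg^i$, where the boundary term $r_k$ carrying the initial increment $\sx^{\ti_k}-\sx^{\ti_k-1}$ collapses, via the geometric sum and \eqref{def:z}, to $(1-\lambda^{t-\ti_k})(\sz^{\ti_k}-\sx^{\ti_k})$, of norm at most $\|\sz^{\ti_k}-\sx^{\ti_k}\|$: the $\tfrac1\lambda$ in the increment is killed by the $\lambda$ from the geometric factor. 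This cancellation is exactly what keeps every constant free of $\tfrac1\lambda$, hence valid down to $\lambda=0$.

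Second, I would bound the gradient sums. Writing $\|\nabla f(\nx^i)\|\le\|\nabla f(\sz^{\ti_k})\|+\sL\|\nx^i-\sz^{\ti_k}\|$ by \Cref{Assumption:1}, the task reduces to estimating $\|\nx^i-\sz^{\ti_k}\|$. For an interior index $i\in\Ti_k$ both $i$ and $i-1$ lie in $\{\ti_k\}\cup\Ti_k$, so $\|\sx^i-\sx^{\ti_k}\|\le\sd_k$ and $\|\sx^i-\sx^{i-1}\|\le 2\sd_k$, giving $\|\nx^i-\sz^{\ti_k}\|\le(1+2\mparam)\sd_k+\|\sz^{\ti_k}-\sx^{\ti_k}\|$. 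The only exceptional term is $i=\ti_k$, where $\nx^{\ti_k}-\sz^{\ti_k}$ reaches back to $\sx^{\ti_k-1}$ and carries a factor $\tfrac1\lambda$; but it is weighted by the single step size $\alpha_{\ti_k}$, so taking $K_\tw$ large and using $\alpha_k\to0$ makes its contribution an arbitrarily small multiple of $\|\sz^{\ti_k}-\sx^{\ti_k}\|$. Using $\sum_{i=\ti_k}^{t-1}\alpha_i\le\tg_{\ti_k,\ti_{k+1}}\le\tw$ (valid for $\ti_k\ge K$ by the argument of \Cref{lem:time-length}) and, for the weighted noise sum in the $\sx$-displacement, a summation-by-parts estimate reducing it to $2\scs_k$ thanks to the monotone weights $1-\lambda^{t-i}$, both displacements obey a common bound of the form $\tfrac{1}{1-\lambda}\bigl[\tw\|\nabla f(\sz^{\ti_k})\|+\sL\tw\bigl((1+2\mparam)\sd_k+\|\sz^{\ti_k}-\sx^{\ti_k}\|\bigr)+2\scs_k\bigr]+\|\sz^{\ti_k}-\sx^{\ti_k}\|$.

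Third, I would close the loop. Taking the maximum over $t\in\Ti_k$ and over the two families gives $\sd_k\le c\,\sd_k+(\text{terms in }\|\sz^{\ti_k}-\sx^{\ti_k}\|,\ \tw\|\nabla f(\sz^{\ti_k})\|,\ \scs_k)$ with $c=\tfrac{\sL\tw(1+2\mparam)}{1-\lambda}$. The restriction $\tw\le\tfrac{(1-\lambda)^2}{20\sL(1+2\mparam)}$ forces $c\le\tfrac{1-\lambda}{20}\le\tfrac1{20}$, so the $c\,\sd_k$ term is absorbable and $\sd_k$ is bounded by a fixed multiple of $\|\sz^{\ti_k}-\sx^{\ti_k}\|+\tfrac{\tw}{1-\lambda}\|\nabla f(\sz^{\ti_k})\|+\tfrac{\scs_k}{1-\lambda}$ (the squared leading coefficient sits safely below $\tfrac32$); a squaring step with a weighted Young's inequality, peeling off $\tfrac32\|\sz^{\ti_k}-\sx^{\ti_k}\|^2$ and lumping the remainder into $\tfrac{15}{(1-\lambda)^2}[\tw^2\|\nabla f(\sz^{\ti_k})\|^2+\scs_k^2]$, yields \eqref{eq:tedious-01-later}. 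For \eqref{eq:tedious-02-later} I would use $\sz^{\ti_{k+1}}-\sx^{\ti_{k+1}}=\tfrac{\lambda}{1-\lambda}(\sx^{\ti_{k+1}}-\sx^{\ti_{k+1}-1})$ with the same increment-EMA unrolling: the carried-over part equals $\lambda^{\ti_{k+1}-\ti_k}(\sz^{\ti_k}-\sx^{\ti_k})$ with $\lambda^{\ti_{k+1}-\ti_k}\le\lambda$ (windows have length $\ge1$), while the residual geometrically weighted gradient/noise sum is controlled by $\|\nabla f(\sz^{\ti_k})\|$, $\scs_k$ and the already-bounded $\sd_k$. Squaring and exploiting the slack $\lambda^2\le\tfrac{1+\lambda}{2}$ in Young's inequality produces the contraction factor $\tfrac{1+\lambda}{2}$ and the remainder $\tfrac{8}{(1-\lambda)^3}[\tw^2\|\nabla f(\sz^{\ti_k})\|^2+4\scs_k^2]$.

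The main obstacle is closing the circular estimate: since $\|\nx^i-\sz^{\ti_k}\|$, and hence the gradient sum, is itself bounded in terms of $\sd_k$, the argument only goes through once the window is small enough to make the feedback coefficient $c<1$, which is the precise role of the bound on $\tw$. A secondary but essential delicacy is the momentum/boundary bookkeeping: routing the heavy-ball term through the auxiliary sequence $\sz$ via the increment-EMA so the $\tfrac1\lambda$ factors cancel (keeping constants uniform in $\lambda\in[0,1)$ and $\mparam\ge0$), and discharging the single $i=\ti_k$ gradient term using $\alpha_{\ti_k}\to0$ and a sufficiently large $K_\tw$.
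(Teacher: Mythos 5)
Your proposal is correct and follows essentially the same route as the paper's proof: unrolling the momentum recursion into a geometrically weighted sum anchored at $\ti_k$ (with the boundary term collapsing to $(1-\lambda^{t-\ti_k})(\sz^{\ti_k}-\sx^{\ti_k})$, which is exactly the paper's $\frac{\lambda(1-\lambda^{\ell-m})}{1-\lambda}(\sx^m-\sx^{m-1})$ term), controlling the noise via $\scs_k$, closing the self-referential bound on $\sd_k$ through the smallness of $\tw$, discharging the single $i=\ti_k$ term via $\alpha_k\to0$, and extracting the contraction $\frac{1+\lambda}{2}$ from the slack between $\lambda^2$ and $\frac{1+\lambda}{2}$ in the second estimate. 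The only cosmetic difference is that you anchor the gradient at $\sz^{\ti_k}$ throughout, whereas the paper anchors at $\sx^{\ti_k}$ and converts via $\|\nabla f(\sx^{\ti_k})\|\le\|\nabla f(\sz^{\ti_k})\|+\sL\|\sz^{\ti_k}-\sx^{\ti_k}\|$ only at the end.
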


In order to establish a descent-type property for $\SGDM$, we introduce a merit function $\mer:\Rn\times\Rn\to\R$. Setting $\zeta := \frac{3\sL}{1-\lambda}$, $\mer$ is defined via:
\begin{equation}
    \label{def:merit} 
    \mer(x,z):=f(z) + \zeta \|z-x\|^2 \quad \implies \quad
    \nabla \mer(x,z) = \begin{bmatrix}
    2\zeta(x-z)\\\nabla f(z) + 2\zeta(z-x)
    \end{bmatrix}.
\end{equation}

Similar merit functions --- typically of the form ``$f(x^k)+\zeta\|x^k-x^{k-1}\|^2$'' --- have been employed in the analysis of momentum-type methods \cite{ruszczynski1987linearization,zavriev1993heavy,OchCheBroPoc14,josz2023convergence}. In our setting, analogous usage of ``$f(x^k)$'' in $\mer$ causes additional complications. Thus, our merit function is based on ``$f(z^k)$'' instead of ``$f(x^k)$''. Below, we list several important bounds involving $\nabla \mer(x,z)$:
\begin{equation} \label{grad-ineq}
    \begin{aligned}
    & 4\zeta^2\|x-z\|^2 \leq \|\nabla \mer(x,z)\|^2, \quad \|\nabla f(z)\|^2 \leq 2\|\nabla \mer(x,z)\|^2, \\
    & \|\nabla \mer(x,z)\|^2 \leq 12 \zeta^2 \|z-x\|^2 + 2 \|\nabla f(z)\|^2.
\end{aligned}
\end{equation}
The latter inequalities follow from
\[
\begin{aligned}
\|\nabla \mer(x,z)\|^2 &= 4\zeta^2\|x-z\|^2 + \|\nabla f(z) + 2\zeta(z-x)\|^2 \\&= 8\zeta^2\|x-z\|^2 + \|\nabla f(z)\|^2 +  4\zeta \langle \nabla f(z), z-x \rangle,
\end{aligned}
\]
and $-\frac{\varepsilon}{2}\|\nabla f(z)\|^2 - \frac{8\zeta^2}{\varepsilon}\|z-x\|^2 \leq 4\zeta\langle \nabla f(z), z-x \rangle \leq \|\nabla f(z)\|^2 + 4\zeta^2\|z-x\|^2$ for $\varepsilon \in \{1,2\}$.
We now present an approximate, time window-based descent property for $\SGDM$. The proof can be found in \Cref{proof:lem:approx-desent}.

\begin{lemma}[Approximate descent]\label{lem:approx-desent}
     Suppose \Cref{as:sgd,Assumption:1} hold and $\SGDM$ is run with $\lambda\in[0,1)$, $\nu\geq0$, and $\{\alpha_k\}_k$ fulfilling \eqref{step size-1}. For any time window $\tw\in(0,\frac{(1-\lambda)^3}{50\sL(1+2\mparam)^2}]$ and its associated time indices $\{\ti_k\}_k$, there is $K_\tw \geq 1$ such that 
\[
    \mer(\sx^{\ti_{k+1}},\sz^{\ti_{k+1}}) + \frac{\sL}{12}\sd_k^2  + \frac{\tw\|\nabla \mer(\sx^{\ti_{k}},\sz^{\ti_{k}})\|^2}{100(1-\lambda)}
    \leq \mer(\sx^{\ti_{k}},\sz^{\ti_{k}})  + \frac{8\scs_k^2}{(1-\lambda)\tw}.
\]
a.s. for all $k \geq K_\tw$.
\end{lemma}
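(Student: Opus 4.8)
The plan is to establish the approximate descent property by tracking how the merit function $\mer(\sx^{\ti_k},\sz^{\ti_k})$ evolves across one time window, exploiting the clean dynamics of the auxiliary sequence $\{\sz^k\}_k$ recorded in \eqref{eq:lem-iter-bound-z-x}. The starting point is the $\sL$-smoothness of $f$ applied to the update $\sz^{\ti_{k+1}} = \sz^{\ti_k} - \frac{1}{1-\lambda}\sum_{i=\ti_k}^{\ti_{k+1}-1}\alpha_i\sg^i$, which follows by telescoping the recursion in \eqref{eq:lem-iter-bound-z-x}. First I would write the descent inequality
\[
f(\sz^{\ti_{k+1}}) \leq f(\sz^{\ti_k}) + \iprod{\nabla f(\sz^{\ti_k})}{\sz^{\ti_{k+1}}-\sz^{\ti_k}} + \tfrac{\sL}{2}\|\sz^{\ti_{k+1}}-\sz^{\ti_k}\|^2,
\]
and then split the aggregated step $\sz^{\ti_{k+1}}-\sz^{\ti_k}$ into a ``drift'' part driven by $\nabla f$ and a ``noise'' part driven by $\se^i$. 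The drift part should contribute a negative term of order $\tw\|\nabla f(\sz^{\ti_k})\|^2$, which is the source of the favorable $\|\nabla\mer\|^2$ term on the left-hand side; this requires controlling $\tg_{\ti_k,\ti_{k+1}} \approx \tw$ via \Cref{lem:time-length} (choosing $\delta$ close to $1$, which fixes $K_\tw$) and bounding the gap between $\nabla f(\nx^i)$ at the running iterates and $\nabla f(\sz^{\ti_k})$ at the window's start using Lipschitz continuity together with the diameter bound $\sd_k$ from \eqref{eq:tedious-01-later}.

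Next I would handle the quadratic term $\|z-x\|^2$ in the merit function: the change $\zeta\|\sz^{\ti_{k+1}}-\sx^{\ti_{k+1}}\|^2 - \zeta\|\sz^{\ti_k}-\sx^{\ti_k}\|^2$ is controlled directly by the contraction estimate \eqref{eq:tedious-02-later}, which yields a factor $\frac{1+\lambda}{2}<1$ and hence a net decrease in $\|z-x\|^2$ up to the $\tw^2\|\nabla f(\sz^{\ti_k})\|^2$ and $\scs_k^2$ error terms. Adding the $f(z)$-descent and the $\|z-x\|^2$-contraction, I would then reassemble the left-hand side: the negative $\|z-x\|^2$ contribution and the negative $\|\nabla f(\sz^{\ti_k})\|^2$ contribution must be combined — using the three inequalities in \eqref{grad-ineq} — into the single term $\frac{\tw}{100(1-\lambda)}\|\nabla\mer(\sx^{\ti_k},\sz^{\ti_k})\|^2$. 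Separately, the diameter term $\frac{\sL}{12}\sd_k^2$ is produced by invoking \eqref{eq:tedious-01-later} to absorb a portion of the drift; since \eqref{eq:tedious-01-later} bounds $\sd_k^2$ from above by $\|z-x\|^2$, $\tw^2\|\nabla f\|^2$, and $\scs_k^2$ terms, I would reserve enough slack in the constants when estimating the descent so that after moving $\frac{\sL}{12}\sd_k^2$ to the left, everything still balances.

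The main obstacle is the simultaneous bookkeeping of the two competing error sources — the Lipschitz gap between running gradients and the anchor gradient $\nabla f(\sz^{\ti_k})$, and the stochastic aggregate $\scs_k$ — while keeping all constants consistent so that the specific coefficients $\frac{\sL}{12}$, $\frac{1}{100(1-\lambda)}$, and $\frac{8}{(1-\lambda)\tw}$ emerge exactly. Concretely, the cross term $\iprod{\nabla f(\sz^{\ti_k})}{\text{noise}}$ and the expansion of $\|\sz^{\ti_{k+1}}-\sz^{\ti_k}\|^2$ both generate products of $\|\nabla f(\sz^{\ti_k})\|$ with $\scs_k$ (and with $\sd_k$), which I would bound by Young's inequality with carefully chosen weights so that the $\|\nabla f\|^2$ pieces stay small enough to be dominated by the reserved negative drift while the $\scs_k^2$ pieces accumulate into the single $\frac{8\scs_k^2}{(1-\lambda)\tw}$ residual. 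The restriction $\tw \leq \frac{(1-\lambda)^3}{50\sL(1+2\mparam)^2}$ is what guarantees each $O(\sL\tw)$-type correction is a small fraction of the leading $\tw$-order decrease, so the plan is to verify at each Young-inequality step that the induced constant is absorbed by this smallness of $\tw$; getting these thresholds to line up — rather than any single estimate — is the delicate part.
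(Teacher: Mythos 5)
Your plan follows essentially the same route as the paper's proof: apply $\sL$-smoothness to the telescoped $z$-update over one window, split $\sz^{\ti_{k+1}}-\sz^{\ti_k}$ into drift, Lipschitz-gap, and noise parts bounded via Young's inequality, use \eqref{eq:tedious-02-later} for the contraction of $\zeta\|z-x\|^2$, absorb $\sd_k^2$ via \eqref{eq:tedious-01-later}, and assemble $\|\nabla\mer\|^2$ from \eqref{grad-ineq} under the smallness of $\tw$. The only cosmetic difference is that the paper anchors the drift at $\nabla f(\sx^{\ti_k})$ before converting to $\nabla f(\sz^{\ti_k})$, which does not change the argument.
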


\section{Convergence Analysis}\label{sec:convergence analysis} 
In this section, we provide our main convergence results for $\SGDM$ including global convergence and iterate convergence under the {\L}ojasiewicz inequality. Throughout this section, we utilize the fixed time window 
\[\tw := \frac{(1-\lambda)^3}{50\sL(1+2\mparam)^2}\quad \text{and}\quad \text{$\{\ti_k\}_k$ denotes the associated time indices.}\]
Such choice of $\tw$ allows us to apply the results in \Cref{lem:err_estimate,lem:iterate-bound,lem:approx-desent}.
\subsection{Global Convergence}\label{sec:subsequential}
We first present a basic global convergence result.
\begin{theorem}
	\label{Prop:convergence}
	Let \Cref{as:sgd,Assumption:1} hold and let $\{\sx^k\}_k$ be generated by $\SGDM$ with $\lambda\in[0,1)$, $\mparam\geq0$, and $\{\alpha_k\}_k$ satisfying
	\begin{equation}\label{eq:ass-step}
	\{\alpha_k\}_k \subset \R_{++}\;\text{is non-increasing}, \quad {\sum}_{k=1}^\infty \alpha_k = \infty, \quad {\sum}_{k=1}^\infty \alpha_k^2 < \infty. 
	\end{equation}
	Then, the following statements are valid:
\begin{enumerate} [label=\textup{(\alph*)},topsep=0pt,itemsep=0ex,partopsep=0ex]
     \item It holds that $\lim_{k\to\infty}\sd_k=0$, $\lim_{k\to\infty}\|\sx^k-\sz^k\|=0$, $\lim_{k\to\infty} \|\nabla{f}(\sx^k)\|=0$, and $\lim_{k\to\infty} \|\nabla{f}(\sz^k)\|=0$ a.s..
     \item $\{f(\sx^k)\}_{k}$ and $\{f(\sz^k)\}_k$ converge to some $\sct{f}^* :\Omega \to \R$ a.s..
 \end{enumerate} 
\end{theorem}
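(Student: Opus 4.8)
The plan is to read \Cref{lem:approx-desent} as an almost-supermartingale-type recursion along the window subsequence $\{\ti_k\}_k$ and then propagate the resulting subsequential limits to the full index set using the per-window oscillation bound $\sd_k$. Throughout I would work on the single full-measure event on which $\cS$ occurs (so that $\sum_k \scs_k^2 < \infty$ by the remark following \Cref{lem:err_estimate}) intersected with the a.s.\ conclusions of \Cref{lem:iterate-bound,lem:approx-desent}; on this event every assertion becomes a deterministic statement about a fixed trajectory. Writing $M_k := \mer(\sx^{\ti_k},\sz^{\ti_k})$ and $r_k := \frac{8}{(1-\lambda)\tw}\sum_{j \geq k}\scs_j^2$ (finite, with $r_k \to 0$), the descent inequality rearranges into
\[
(M_{k+1}+r_{k+1}) + \frac{\sL}{12}\sd_k^2 + \frac{\tw}{100(1-\lambda)}\|\nabla\mer(\sx^{\ti_k},\sz^{\ti_k})\|^2 \leq M_k + r_k
\]
for all $k \geq K_\tw$. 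Since $f \geq \bar f$ forces $M_k \geq \bar f$ and $r_k \geq 0$, the quantity $V_k := M_k + r_k$ is non-increasing and bounded below, hence converges to some $V^* \geq \bar f$; as $r_k \to 0$ this yields convergence of $M_k = \mer(\sx^{\ti_k},\sz^{\ti_k})$ to $V^*$. Telescoping the same inequality from $K_\tw$ gives $\sum_k \sd_k^2 < \infty$ and $\sum_k \|\nabla\mer(\sx^{\ti_k},\sz^{\ti_k})\|^2 < \infty$, so in particular $\sd_k \to 0$ and $\|\nabla\mer(\sx^{\ti_k},\sz^{\ti_k})\| \to 0$ a.s.

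I would then convert the vanishing of $\|\nabla\mer\|$ into the individual quantities via \eqref{grad-ineq}: from $4\zeta^2\|\sx^{\ti_k}-\sz^{\ti_k}\|^2 \leq \|\nabla\mer(\sx^{\ti_k},\sz^{\ti_k})\|^2$ and $\|\nabla f(\sz^{\ti_k})\|^2 \leq 2\|\nabla\mer(\sx^{\ti_k},\sz^{\ti_k})\|^2$ I obtain $\|\sx^{\ti_k}-\sz^{\ti_k}\| \to 0$ and $\|\nabla f(\sz^{\ti_k})\| \to 0$ a.s.\ along the window indices, establishing \textup{(a)} and \textup{(b)} restricted to $\{\ti_k\}_k$.

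The remaining and central step is the transfer from $\{\ti_k\}_k$ to the full sequence, which is exactly what $\sd_k \to 0$ buys. Every sufficiently large $n$ lies in a unique (finite) window $\Ti_j$ with $j \to \infty$ as $n \to \infty$, and by \eqref{def:d} we have $\|\sx^n - \sx^{\ti_j}\| \leq \sd_j$ and $\|\sz^n - \sz^{\ti_j}\| \leq \sd_j$. The triangle inequality then gives $\|\sx^n - \sz^n\| \leq 2\sd_j + \|\sx^{\ti_j} - \sz^{\ti_j}\| \to 0$, and $\sL$-Lipschitzness of $\nabla f$ yields $\|\nabla f(\sz^n)\| \leq \|\nabla f(\sz^{\ti_j})\| + \sL\sd_j \to 0$ as well as $\|\nabla f(\sx^n)\| \leq \|\nabla f(\sz^n)\| + \sL\|\sx^n - \sz^n\| \to 0$, which completes \textup{(a)}. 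For \textup{(b)}, since $\mer(\sx^{\ti_k},\sz^{\ti_k}) \to V^*$ and $\|\sx^{\ti_k} - \sz^{\ti_k}\| \to 0$, the quadratic term in $\mer$ vanishes and $f(\sz^{\ti_k}) \to V^* =: \sct{f}^*$; the descent lemma applied along each window gives $|f(\sz^n) - f(\sz^{\ti_j})| \leq \|\nabla f(\sz^{\ti_j})\|\sd_j + \frac{\sL}{2}\sd_j^2 \to 0$ and $|f(\sx^n) - f(\sz^n)| \leq \|\nabla f(\sz^n)\|\|\sx^n - \sz^n\| + \frac{\sL}{2}\|\sx^n - \sz^n\|^2 \to 0$, so both $f(\sz^n)$ and $f(\sx^n)$ converge to $\sct{f}^*$. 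I expect the main obstacle to be precisely this subsequence-to-full-sequence transfer: it is legitimate only because the time-window construction drives the within-window oscillation $\sd_k$ to zero, whereas the raw increments $\|\sx^{k+1}-\sx^k\|$ need not even be summable, as the toy example in \Cref{subsec:toy-example} shows. All bounds are invoked on the single full-measure event fixed at the outset, which upgrades the deterministic estimates to the claimed a.s.\ statements.
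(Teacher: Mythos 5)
Your proposal is correct and follows essentially the same route as the paper's proof: fix a sample in the full-measure event from \Cref{lem:err_estimate} (with $\beta_k\equiv 1$), read \Cref{lem:approx-desent} as a telescoping descent of $\mer(\sx^{\ti_k},\sz^{\ti_k})+u_k$, extract $\sd_k\to 0$ and $\|\nabla\mer(\sx^{\ti_k},\sz^{\ti_k})\|\to 0$, and propagate to all indices via the within-window oscillation bound and the descent lemma. The only cosmetic difference is that you obtain $\|\sx^{\ti_k}-\sz^{\ti_k}\|\to 0$ from the gradient bound \eqref{grad-ineq}, whereas the paper derives $\max_{\ell\in\Ti_k}\|\sz^\ell-\sx^\ell\|\to 0$ directly from the momentum identity $(1-\lambda)(\sz^m-\sx^m)=\lambda(\sx^m-\sx^{m-1})$ and $\sd_k\to0$; both are valid.
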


\begin{proof}{Proof}
Set $\beta_k = 1$ in \eqref{step size-1} and \eqref{eq:def-stochastic err}. By \Cref{lem:err_estimate}, we have $\Prob(\cS)=1$, where $\cS=\{\omega\in\Omega:{\sum}_{k=1}^\infty \;\scs_k^2(\omega) < \infty\}$. We fix an arbitrary sample $\omega \in \cS$ and consider $x^k \equiv \sx^k(\omega)$, $z^k \equiv \sz^k(\omega)$, $s_k \equiv \scs_k(\omega)$, $d_k\equiv \sd_k(\omega)$, etc.. By \Cref{lem:approx-desent}, there is $K_\tw \geq 1$ such that for all $k\geq K_\tw$, 
\begin{equation}\label{eq:approximate descent use}
  \mer(x^{\ti_{k+1}},z^{\ti_{k+1}}) + u_{k+1} + \frac{\sL}{12}d_k^2 + \frac{\tw\|\nabla \mer(x^{\ti_k}, z^{\ti_k})\|^2}{100(1-\lambda)}  \leq \mer(x^{\ti_{k}},z^{\ti_{k}}) + u_{k},
\end{equation}
where $u_k:=\frac{8}{(1-\lambda)\tw}{\sum}_{i=k}^\infty s_i^2$. Since $f$ is bounded from below, we have $\mer(x^{\ti_{k}},z^{\ti_{k}}) + u_{k}\downarrow f^*$ for some $f^*\in\R$. Telescoping \eqref{eq:approximate descent use} and using \cref{grad-ineq}, this yields
  \begin{equation}
     \label{eq:prop:convergence-important}
     \|\nabla f(z^{\ti_{k}})\| \to 0, \quad d_k\to0, 
 \quad \text{and} \quad {\max}_{\ell\in\Ti_k}\|z^{\ell}-x^{\ell}\| \to 0\quad \text{as\quad $k\to\infty$}.
\end{equation}
Here, the last result follows from \eqref{eq:lem-iter-bound-z-x} and the subsequent relations: 
\[
    (1-\lambda) \|z^{m}-x^{m}\|={\lambda}\|x^m-x^{m-1}\| \leq {\lambda}[\|x^m-x^{\ti_k}\| + \|x^{m-1}-x^{\ti_k}\|]  \leq {2\lambda}d_k\to0, \quad \forall~m\in\Ti_k. 
\]
Based on \eqref{eq:prop:convergence-important} and applying the Lipschitz continuity of $\nabla f$, we obtain
\[
 \begin{aligned}
   {\max}_{\ell\in\Ti_k} \|\nabla f(x^\ell)\| &\leq \|\nabla f(x^{\ti_{k}})\| +   {\max}_{\ell\in\Ti_k} \sL\|x^\ell-x^{\ti_k}\| \leq \|\nabla f(x^{\ti_{k}})\| +   \sL d_k \\&\leq \|\nabla f(z^{\ti_{k}})\| +\sL \|z^{\ti_k}-x^{\ti_k}\| +  \sL d_k \to 0 \quad \text{as\quad $k\to\infty$.}
 \end{aligned}
 \]
Therefore, we can infer $\|\nabla f(x^k)\|, \|\nabla f(z^k)\| \to 0$ as $\|x^k-z^k\|\to0$.

Since $\{\mer(x^{\ti_{k}},z^{\ti_{k}}) + u_{k}\}_k$ converges to $f^*$ and it holds that $u_k, \|z^{\ti_k}-x^{\ti_k}\|\to0 $ as $k\to\infty$, we conclude that $\lim_{k\to\infty} f(z^{\ti_k}) = f^*$. By \Cref{Assumption:1}, we have
\begin{equation}
    \label{eq:prop-descent-use-later}
    \begin{aligned}
        &\hspace{-1ex}|f(y_1) - f(y_2)| \leq  \max\{\|\nabla f(y_1)\|,\|\nabla f(y_2)\|\}\cdot \|y_1-y_2\| + \tfrac{\sL}{2}\|y_1-y_2\|^2 \\ &\leq \tfrac{1}{2\sL}\max\{\|\nabla f(y_1)\|^2,\|\nabla f(y_2)\|^2\} + \sL\|y_1-y_2\|^2,\quad \text{for all}\;\; y_1,y_2\in\Rn.
    \end{aligned}
\end{equation}
Substituting $y_1=x^{\ti_k}$, $y_2=z^{\ti_k}$ in \eqref{eq:prop-descent-use-later} and using $\|z^{\ti_k}-x^{\ti_k}\| \to 0$, we obtain \[|f(x^{\ti_k})-f(z^{\ti_k})| \leq \tfrac{1}{2\sL}\max\{\|\nabla f(x^{\ti_k})\|^2,\|\nabla f(z^{\ti_k})\|^2\} + \sL\|x^{\ti_k}-z^{\ti_k}\|^2\to0.\]
Furthermore, substituting $y_1=x^{\ti_k}$, $y_2=x^{\ell}$ in \eqref{eq:prop-descent-use-later}, it follows 
\[
\max_{\ell\in\Ti_k} |f(x^\ell)-f(x^{\ti_k})| \leq \frac{1}{2\sL}\max_{\ell\in\Ti_k\cup\{\gamma_k\}} \|\nabla f(x^\ell)\|^2 + \sL \cdot \max_{\ell\in\Ti_k} \|x^\ell-x^{\ti_k}\|^2\to0, \;\; k\to\infty.
\]
This implies $\max_{\ell\in\Ti_k} |f(x^\ell)-f^*| \leq |f(x^{\ti_k}) - f^*| + \max_{\ell\in\Ti_k} |f(x^\ell)-f(x^{\ti_k})| \to 0 $ and thus, we have $f(x^k) \to f^*$ as $k\to\infty$. Noting $\|x^k-z^k\|\to0$, we also conclude that $\lim_{k\to\infty} f(z^k) = f^*$.
\end{proof}
	
\subsection{Iterate Convergence under the {\L}ojasiewicz  Inequality} \label{sec:sequential}
In this subsection, we establish iterate convergence results, i.e., the stochastic process $\{\sx^k\}_{k}$ generated by $\SGDM$ is shown to converge to a $\crit(f)$-valued mapping $\sx^*:\Omega\to\crit(f)$ almost surely. This type of convergence is typically interpreted as a \emph{last-iterate convergence} result, previously studied in the (strongly) convex setting, \cite{gadat2018stochastic,sebbouh2021almost}.

\begin{theorem}
	\label{thm:finite sum}
	 Suppose \Cref{as:sgd,Assumption:1,Assumption:2} hold and let $\{\sx^k\}_k$ be generated by $\SGDM$ with $\lambda \in [0,1),\; \mparam \geq 0$, and non-increasing step sizes $\{\alpha_k\}_k\subset \R_{++}$ satisfying: 
	\begin{equation}\label{eq:kl-step}  
		{\sum}_{k=1}^\infty \alpha_k = \infty \quad \text{and} \quad {\sum}_{k=1}^{\infty} \alpha_k^2 \Big( {\sum}_{i=1}^k\alpha_i \Big)^{2r}< \infty \quad \text{for some $r>\frac12$}.
	\end{equation}
 Then, the event 
$\mathcal I := \{\omega\in\Omega: {\lim}_{k\to\infty}\,\|\sx^k(\omega)\|=\infty \; \text{or}\; \sx^k(\omega) \to x^*\in\crit(f) \}$ occurs w.p. $1$.
 \end{theorem}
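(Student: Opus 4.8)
The plan is to prove the dichotomy pathwise on a single full-measure event. First I would instantiate the framework of \Cref{sec:time win} with the weight $\beta_k := (\sum_{i=1}^k \alpha_i)^r$, which is non-decreasing and turns \eqref{eq:kl-step} into the condition \eqref{step size-1}; thus \Cref{lem:err_estimate} applies and $\Prob(\cS)=1$. The decisive gain over \Cref{Prop:convergence} is that on $\cS$ one has $\sum_k \beta_{\ti_k}^2 \scs_k^2 < \infty$, and since \Cref{lem:time-length} gives $\tg_{1,\ti_k}\asymp k\tw$ and hence $\beta_{\ti_k}\asymp k^r$ with $r>\tfrac12$, Cauchy--Schwarz upgrades this to $\sum_k \scs_k < \infty$ almost surely --- \emph{summability} of the aggregated noise, not merely square-summability. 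Since \eqref{eq:kl-step} implies \eqref{eq:ass-step}, I intersect with the full-measure event of \Cref{Prop:convergence} and fix $\omega$ in the intersection; on it $\sd_k\to0$, $\|\sx^k-\sz^k\|\to0$, $\|\nabla f(\sx^k)\|,\|\nabla f(\sz^k)\|\to0$ and $f(\sx^k),f(\sz^k)\to f^*$. Because $\{\lim_k\|\sx^k\|=\infty\}^c=\{\liminf_k\|\sx^k\|<\infty\}$, it suffices to show that on the latter event the iterates converge to a critical point; there a bounded subsequence yields $\sx^{\ti_{k_j}}\to x^*$, and $\nabla f(x^*)=0$, $f(x^*)=f^*$ follow by continuity.

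The core is to establish the finite-length property of the windowed auxiliary sequence, $\sum_k\|\sz^{\ti_{k+1}}-\sz^{\ti_k}\|<\infty$, near $x^*$. I would first convert \Cref{Assumption:2} into a {\L}ojasiewicz inequality for the merit function $\mer$: combining $\|\nabla f(z)\|\le\sqrt2\|\nabla\mer(x,z)\|$ and $\|z-x\|\le\frac{1}{2\zeta}\|\nabla\mer(x,z)\|$ from \eqref{grad-ineq} with $(a+b)^\theta\le a^\theta+b^\theta$ shows $|\mer(x,z)-f^*|^\theta\lesssim\|\nabla\mer(x,z)\|$ in a neighborhood of $(x^*,x^*)$, with the same exponent $\theta$. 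Next I would reduce the path length to the gradient: \eqref{eq:tedious-01-later} bounds $\sd_k\lesssim \|\sz^{\ti_k}-\sx^{\ti_k}\|+\tw\|\nabla f(\sz^{\ti_k})\|+\scs_k$, while telescoping the contraction \eqref{eq:tedious-02-later} (whose factor $\tfrac{1+\lambda}{2}<1$) controls the gap, $\sum_k\|\sz^{\ti_k}-\sx^{\ti_k}\|\lesssim \tw\sum_k\|\nabla f(\sz^{\ti_k})\|+\sum_k\scs_k$. Hence everything reduces to $\sum_k\|\nabla\mer(\sx^{\ti_k},\sz^{\ti_k})\|<\infty$, with the noise now entering only linearly through $\sum_k\scs_k<\infty$.

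For this gradient sum I would run an inexact {\L}ojasiewicz argument on \Cref{lem:approx-desent}, read as a sufficient-decrease inequality $\mer_{k+1}\le\mer_k-\frac{\sL}{12}\sd_k^2+\frac{8}{(1-\lambda)\tw}\scs_k^2$ with per-step error $\propto\scs_k^2$. Writing $p_k:=\mer(\sx^{\ti_k},\sz^{\ti_k})-f^*$ and $\varphi(s)=\frac{c^{-1}}{1-\theta}s^{1-\theta}$, and using a relative-error bound $\|\nabla\mer(\sx^{\ti_k},\sz^{\ti_k})\|\lesssim \sd_{k-1}+\sd_k+\scs_k$ (from the $\sz$-update $\sz^{\ti_{k+1}}-\sz^{\ti_k}=-\frac{1}{1-\lambda}\sum_{i=\ti_k}^{\ti_{k+1}-1}\alpha_i\sg^i$ with $\tg_{\ti_k,\ti_{k+1}}\asymp\tw$, replacing the window-averaged gradient by $\nabla f(\sz^{\ti_k})$ via $\sL$-smoothness), concavity gives $\varphi(p_k)-\varphi(p_{k+1})\ge\varphi'(p_k)(\frac{\sL}{12}\sd_k^2-\mathrm{err}_k)$, and the merit {\L}ojasiewicz inequality turns $\varphi'(p_k)$ into $1/\|\nabla\mer\|$. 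An AM--GM split then yields $\sd_k\le\frac13\sd_{k-1}+C[\varphi(p_k)-\varphi(p_{k+1})]+C\scs_k+C\varphi'(p_k)\scs_k^2$, which I would sum, absorbing $\tfrac13\sd_{k-1}$ on the left and telescoping $\varphi(p_k)$.

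The hard part will be the last error term $\sum_k\varphi'(p_k)\scs_k^2=\sum_k \tfrac1c p_k^{-\theta}\scs_k^2$, where the {\L}ojasiewicz singularity at $f^*$ meets the noise floor: when $p_k$ drops to the order of $\scs_k$, $\varphi'(p_k)$ blows up. I would control this by splitting indices according to whether $p_k\ge\scs_k$ or not; on the first set $p_k^{-\theta}\scs_k^2\le\scs_k^{2-\theta}\le\scs_k$ (using $\theta<1$ and $\scs_k\to0$), which is summable, while the ``deep-noise'' steps $p_k<\scs_k$ keep the value within the noise floor of $f^*$ and are handled by a separate block estimate charging their total displacement to $\sum_k\scs_k$ and the telescoped $\varphi$. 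The remaining work is the classical capture argument --- choosing the entry index so large and $\varphi(p_{k_0})$ together with the noise tail so small that the accumulated length cannot exit the neighborhood where the merit {\L}ojasiewicz inequality holds --- after which $\{\sz^{\ti_k}\}$ is Cauchy, and $\sd_k\to0$, $\|\sx^k-\sz^k\|\to0$ propagate this to $\{\sx^k\}$ exactly as in \Cref{subsec:toy-example}, forcing $\sx^k\to x^*\in\crit(f)$ and hence $\Prob(\mathcal I)=1$.
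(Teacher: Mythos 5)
Your overall architecture matches the paper's: weight $\beta_k=(\sum_{i\le k}\alpha_i)^r$ in \Cref{lem:err_estimate}, intersection with the full-measure event of \Cref{Prop:convergence}, the {\L}ojasiewicz inequality for $\mer$ at $(x^*,x^*)$, a window-wise finite-length bound, and the capture/induction argument. Your observation that Cauchy--Schwarz upgrades $\sum_k\beta_{\ti_k}^2\scs_k^2<\infty$ to $\sum_k\scs_k<\infty$ is correct and is a nice alternative use of the stronger step-size condition. However, the core of your argument --- the inexact KL recursion --- has two genuine gaps at exactly the point where the paper's proof does something different.

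First, you define $\varphi(p_k)$ with $p_k:=\mer(\sx^{\ti_k},\sz^{\ti_k})-f^*$, but $p_k$ need not be nonnegative: \Cref{lem:approx-desent} only forces the \emph{shifted} quantity $\mer(\sx^{\ti_k},\sz^{\ti_k})+u_k$ to decrease monotonically to $f^*$, hence only $p_k\ge -u_k$. Since $\varphi(s)\propto s^{1-\theta}$ with $\theta\in[\tfrac12,1)$, the quantity $\varphi(p_k)$ and its derivative $\varphi'(p_k)=c^{-1}p_k^{-\theta}$ are simply undefined on the (unavoidable) steps where the noise pushes $\mer$ below $f^*$. The paper's proof avoids this entirely by running the KL step on $y_k:=\mer(\sx^{\ti_k},\sz^{\ti_k})-f^*+u_k\ge 0$ with $u_k=\frac{8}{(1-\lambda)\tw}\sum_{i\ge k}\scs_i^2$; the approximate descent then becomes an \emph{exact} descent for $y_k$, and the only price is the subadditivity estimate $[\varrho'(y_k)]^{-1}\le\|\nabla\mer(\sx^{\ti_k},\sz^{\ti_k})\|+\sC u_k^\vartheta$, i.e.\ a purely additive error $u_k^\vartheta$ in the length bound of \Cref{lem:kl-ineq}. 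The summability of $u_k^\vartheta$ (for $\vartheta\ge\theta$ with $\vartheta r>\tfrac12$) is where the hypothesis $r>\tfrac12$ is actually consumed.

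Second, even setting the sign issue aside, your treatment of the term $\sum_k\varphi'(p_k)\scs_k^2$ is incomplete precisely on the ``deep-noise'' set $\{k:p_k<\scs_k\}$. There the proposed ``block estimate charging the displacement to $\sum_k\scs_k$'' is a placeholder, not an argument: on such windows $\sd_k$ is \emph{not} dominated by $\scs_k$, because \eqref{eq:tedious-01-later} also contains $\tw\|\nabla f(\sz^{\ti_k})\|$ and $\|\sz^{\ti_k}-\sx^{\ti_k}\|$, and the {\L}ojasiewicz inequality only lower-bounds the gradient by the function gap --- a small $p_k$ does not force a small $\|\nabla\mer\|$. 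The descent inequality gives $\sum_k\sd_k^2<\infty$, which does not yield $\sum_k\sd_k<\infty$ over those blocks. This is the hard step your proof defers, and it is exactly the step the paper's $+u_k$ shift is engineered to eliminate: with the shifted Lyapunov function no case split is needed, since $y_k\ge u_k$ automatically keeps $[\varrho'(y_k)]^{-1}$ bounded below by $\sC u_k^\vartheta$ on noisy steps. To repair your proof you would essentially have to reintroduce that shift, at which point you recover the paper's \Cref{lem:kl-ineq}.
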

 
 \begin{remark} \label{rem:iter conv}
 We continue with several remarks on \Cref{thm:finite sum}.
 \begin{itemize}[leftmargin=4.5ex]
 \item \emph{No a priori boundedness.} The statement $\Prob(\mathcal I) = 1$ can be interpreted as follows: If the stochastic process $\{\sx^k\}_k$, generated by $\SGDM$, does not diverge to infinity a.s., then it must converge to some stationary points a.s.. The case $\|\sx^k(\omega)\| \to \infty$ can be ruled out if $\{\sx^k(\omega)\}_k$ has at least one accumulation point. Alternatively, we may define the event 
\[
\cX:=\{\omega\in\Omega: {\liminf}_{k\to\infty} \; \|\sx^k(\omega)\|<\infty \}.
\]
The iterates $\{\sx^k\}_k$ then converge a.s$.$ on $\cX$. Moreover, if $\Prob(\cX)=1$, then $\{\sx^k\}_k$ converges a.s$.$ to some stationary points of $f$. The condition $\Prob(\cX)=1$ is significantly weaker than the a priori (a.s.) boundedness of $\{\sx^k\}_k$ required in other {\L}ojasiewicz inequality-based convergence analyses of stochastic methods, \cite{tadic2015convergence,li2023convergence}. 
 \item \emph{Step sizes requirements.} Condition \eqref{eq:kl-step} appeared in a similar form in \cite[Corollary 2.2]{tadic2015convergence} requiring $r > 1$. \eqref{eq:kl-step} holds, e.g., for polynomial step sizes of the form $\alpha_k \sim k^{-\gamma}$, $\gamma\in(\frac23,1]$. Hence, under proper polynomial step sizes, iterate convergence of $\{\sx^k\}_k$ is ensured by  \Cref{thm:finite sum}. 
 \end{itemize}
 \end{remark} 
 
\subsection{Proof of Iterate Convergence Theorem} \label{proof:thm:finite sum}
Before proving \Cref{thm:finite sum}, we first state two preparatory lemmas.
\begin{lemma}\label{lem:mer-kl}
    If $f:\Rn\to\R$ satisfies the {\L}ojasiewicz inequality at $x^*\in\Rn$ with exponent $\theta\in[\frac12,1)$, then the merit function $\mer:\Rn\times \Rn \to \R$ has the {\L}ojasiewicz inequality at $(x^*,x^*)$ with exponent $\theta\in[\frac12,1)$, i.e.,  there are $\eta \in (0,\infty]$ and a neighborhood $U(x^*)$ of $x^*$ such that for all $x,z \in V(x^*) := U(x^*) \cap \{x \in \Rn: |f(x) - f(x^*)| < \eta\}$, we have
  \[\|\nabla \mer(x,z)\|\geq \sC|\mer(x,z)-f(x^*)|^\theta, \quad \text{for some } \sC = \sC(x^*) >0. 
  \]
\end{lemma}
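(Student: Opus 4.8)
The plan is to control $|\mer(x,z)-f(x^*)|^\theta$ from above by a constant multiple of $\|\nabla\mer(x,z)\|$, exploiting the additive structure of the merit function together with the algebraic relations already recorded in \eqref{grad-ineq}. The natural starting point is the decomposition
\[
\mer(x,z)-f(x^*) = \big(f(z)-f(x^*)\big) + \zeta\|z-x\|^2,
\]
which splits the deviation of $\mer$ from $f(x^*)$ into an ``$f$-part'' and a ``proximity-part''. Since $\theta\in[\tfrac12,1)$, the map $t\mapsto t^\theta$ is concave on $[0,\infty)$ with value $0$ at $0$ and hence subadditive, so that
\[
|\mer(x,z)-f(x^*)|^\theta \;\leq\; |f(z)-f(x^*)|^\theta + \zeta^\theta\|z-x\|^{2\theta}.
\]
It then suffices to dominate each of these two terms by $\|\nabla\mer(x,z)\|$.

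First I would fix the neighborhood: let $U(x^*)$ and $\eta$ be furnished by the {\L}ojasiewicz inequality for $f$ at $x^*$, shrinking $U(x^*)$ if necessary so that it is bounded (say of diameter at most $1$), and set $V(x^*)$ as in the statement. For $x,z\in V(x^*)$ the point $z$ lies in the {\L}ojasiewicz region of $f$, so $\|\nabla f(z)\|\ge \sC_f|f(z)-f(x^*)|^\theta$; this bound also holds trivially when $f(z)=f(x^*)$, since then the right-hand side vanishes. Combining it with the second inequality of \eqref{grad-ineq}, namely $\|\nabla f(z)\|\le \sqrt{2}\,\|\nabla\mer(x,z)\|$, disposes of the $f$-part:
\[
|f(z)-f(x^*)|^\theta \leq \tfrac{\sqrt2}{\sC_f}\,\|\nabla\mer(x,z)\|.
\]
For the proximity-part, the first inequality of \eqref{grad-ineq} gives the elementary bound $\|z-x\|\le \tfrac{1}{2\zeta}\|\nabla\mer(x,z)\|$.

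The step I expect to require the most care is the proximity-part, because raising $\|z-x\|$ to the power $2\theta$ naturally yields $\|\nabla\mer(x,z)\|^{2\theta}$ rather than the first power needed in a {\L}ojasiewicz inequality. This is precisely where the hypothesis $\theta\ge\tfrac12$ enters: since $2\theta\ge1$ and $\|z-x\|\le 1$ on the shrunk neighborhood, one has $\|z-x\|^{2\theta}\le\|z-x\|$, whence
\[
\zeta^\theta\|z-x\|^{2\theta}\le \zeta^\theta\|z-x\| \le \tfrac{\zeta^{\theta-1}}{2}\,\|\nabla\mer(x,z)\|.
\]
(Alternatively, one may write $\|\nabla\mer(x,z)\|^{2\theta}=\|\nabla\mer(x,z)\|^{2\theta-1}\,\|\nabla\mer(x,z)\|$ and use that $\nabla\mer$ is bounded on the bounded set $V(x^*)\times V(x^*)$ by continuity of $\nabla f$.) Had $\theta<\tfrac12$, the exponent $2\theta-1$ would be negative and no such uniform bound would survive as $\|\nabla\mer\|\to0$, which is exactly what confines the argument to $\theta\in[\tfrac12,1)$. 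Adding the two estimates gives $|\mer(x,z)-f(x^*)|^\theta\le \sC^{-1}\|\nabla\mer(x,z)\|$ with $\sC^{-1}=\tfrac{\sqrt2}{\sC_f}+\tfrac{\zeta^{\theta-1}}{2}$, i.e.\ the claimed {\L}ojasiewicz inequality for $\mer$ at $(x^*,x^*)$ with the same exponent $\theta$.
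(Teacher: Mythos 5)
Your proof is correct, but it takes a different route from the paper: the paper does not give a self-contained argument at all and instead invokes the calculus rules for {\L}ojasiewicz/KL exponents of separable sums from \cite[Theorem 3.6]{li2018calculus}, adding only the observation (which you also make) that the case $f(z)=f(x^*)$ is trivial so the condition $|f(x)-f(x^*)|>0$ can be dropped. Your argument instead exploits the concrete structure of $\mer(x,z)=f(z)+\zeta\|z-x\|^2$ directly: the decomposition of $\mer(x,z)-f(x^*)$ into the $f$-part and the proximity part, subadditivity of $t\mapsto t^\theta$, the two gradient bounds already recorded in \eqref{grad-ineq}, and the inequality $\|z-x\|^{2\theta}\le\|z-x\|$ on a neighborhood of diameter at most $1$, which is exactly where $\theta\ge\tfrac12$ is used. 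All steps check out (in particular the $f$-part correctly handles $f(z)=f(x^*)$, and the constant $\sC^{-1}=\tfrac{\sqrt2}{\sC_f}+\tfrac{\zeta^{\theta-1}}{2}$ is legitimate). What your approach buys is a transparent, elementary, fully self-contained proof that also makes visible why the restriction $\theta\in[\tfrac12,1)$ is needed; what the paper's approach buys is brevity and the generality of an off-the-shelf exponent-calculus result that would cover more general couplings than $\zeta\|z-x\|^2$.
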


\begin{proof}{Proof} This result essentially follows from the proof of \cite[Theorem 3.6]{li2018calculus}. Note that the condition ``$|f(x)-f(x^*)| > 0$'' can be dropped here, as the {\L}ojasiewicz inequality in \Cref{Assumption:2} and \Cref{lem:mer-kl} trivially hold in the case $f(x) = f(x^*)$. We will omit further details and refer to \cite{li2018calculus}. \end{proof} 

Leveraging \Cref{lem:mer-kl}, we can establish the following technical trajectory-based bound. The proof of \Cref{lem:kl-ineq} is presented in \Cref{proof:lem:kl-ineq}.
\begin{lemma}\label{lem:kl-ineq}
    Let \Cref{as:sgd,Assumption:1,Assumption:2} hold and let $\{\sx^k\}_k$ be generated by $\SGDM$ with $\lambda \in [0,1)$, $\mparam \geq 0$, and $\{\alpha_k\}_k$ satisfying \eqref{step size-1}. Consider fixed $\omega\in\cS$, $x^k\equiv\sx^k(\omega)$, $z^k\equiv\sz^k(\omega)$, $d_k\equiv \sd_k(\omega)$, $s_k\equiv \scs_k(\omega)$, etc.. If there are $x^*\in\crit(f)$ and $k\geq K_\tw$ such that $x^{\ti_k},z^{\ti_k} \in V(x^*)$, $|\mer(x^{\ti_k},z^{\ti_k}) - f(x^*)| < 1$, and $\mer(x^{\ti_{j}},z^{\ti_{j}}) + u_j \geq f(x^*)$, for $j \in \{k,k+1\}$, then it holds that
    \[ d_k \leq \frac{150(1+2\mparam)}{(1-\lambda)^3}[\Psi_k - \Psi_{k+1}] + 3(1+2\mparam)\sC\tw u_k^\vartheta,\quad \text{for all $\vartheta\in[\theta,1)$},
 \]
 where $u_k:=\frac{8}{(1-\lambda)\tw}{\sum}_{i=k}^\infty s_i^2$ and $\Psi_k := \frac{1}{\sC(1-\vartheta)}[\mer(x^{\ti_{k}},z^{\ti_{k}}) - f(x^*) + u_k]^{1-\vartheta}$. 
\end{lemma}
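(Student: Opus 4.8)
The plan is to run the standard {\L}ojasiewicz length argument, but on the time-window skeleton $\{x^{\ti_k}\}_k$ and with the merit function $\mer$ in place of $f$. The first step is to repackage the approximate descent estimate of \Cref{lem:approx-desent} in terms of the monotone quantity $h_k := \mer(x^{\ti_k},z^{\ti_k}) - f(x^*) + u_k$. Since $u_k - u_{k+1} = \frac{8}{(1-\lambda)\tw}s_k^2$ by the definition of $u_k$, adding $u_{k+1}$ to both sides of the descent inequality absorbs the noise term $\frac{8 s_k^2}{(1-\lambda)\tw}$ and yields
\[
h_{k+1} + \tfrac{\sL}{12}d_k^2 + \tfrac{\tw}{100(1-\lambda)}\|\nabla\mer(x^{\ti_k},z^{\ti_k})\|^2 \leq h_k .
\]
In particular $h_k$ is non-increasing, and the hypotheses $\mer(x^{\ti_j},z^{\ti_j}) + u_j \geq f(x^*)$ for $j\in\{k,k+1\}$ guarantee $h_k,h_{k+1}\geq 0$, so that $\Psi_k = \frac{1}{\sC(1-\vartheta)}h_k^{1-\vartheta}$ is well defined. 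This is the decisive modeling choice: by folding the aggregated error into $u_k$, I bound $d_k$ through $\sqrt{h_k - h_{k+1}}$ coming from the \emph{descent} estimate, rather than through the iterate bound \eqref{eq:tedious-01-later}; the latter would reintroduce an $s_k$ term which is only square-summable and hence cannot be controlled by $u_k^\vartheta$ when $\vartheta>\tfrac12$.

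Next I convert the {\L}ojasiewicz inequality for $\mer$ (\Cref{lem:mer-kl}) into a bound on $h_k^\vartheta$. Writing $a_k := \mer(x^{\ti_k},z^{\ti_k}) - f(x^*)$, the admissibility condition $x^{\ti_k},z^{\ti_k}\in V(x^*)$ lets me apply \Cref{lem:mer-kl} to obtain $\|\nabla\mer(x^{\ti_k},z^{\ti_k})\| \geq \sC|a_k|^\theta$. I then establish the perturbed estimate
\[
h_k^\vartheta \leq \tfrac{1}{\sC}\|\nabla\mer(x^{\ti_k},z^{\ti_k})\| + u_k^\vartheta \qquad \text{for all } \vartheta\in[\theta,1),
\]
by a short case split. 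If $a_k\geq 0$, subadditivity of $t\mapsto t^\vartheta$ gives $h_k^\vartheta\leq a_k^\vartheta + u_k^\vartheta$, and since $0\leq a_k<1$ (the hypothesis $|a_k|<1$) together with $\vartheta\geq\theta$ yields $a_k^\vartheta\leq a_k^\theta$, the {\L}ojasiewicz bound closes the case; if $a_k<0$, then $0\leq h_k\leq u_k$ forces $h_k^\vartheta\leq u_k^\vartheta$ outright.

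The third step is the concavity argument. As $t\mapsto \frac{1}{\sC(1-\vartheta)}t^{1-\vartheta}$ is concave on $\R_+$, I get $\Psi_k - \Psi_{k+1} \geq \frac{1}{\sC}h_k^{-\vartheta}(h_k - h_{k+1})$, i.e. $h_k - h_{k+1} \leq \sC\, h_k^\vartheta(\Psi_k - \Psi_{k+1})$. Feeding in the perturbed estimate and the two descent consequences $d_k\leq\sqrt{12/\sL}\,\sqrt{h_k-h_{k+1}}$ and $\|\nabla\mer(x^{\ti_k},z^{\ti_k})\|\leq\sqrt{100(1-\lambda)/\tw}\,\sqrt{h_k-h_{k+1}}$ produces a self-referential inequality of the form $\delta_k \leq M(\Psi_k-\Psi_{k+1})\sqrt{\delta_k} + \sC(\Psi_k-\Psi_{k+1})u_k^\vartheta$ with $\delta_k := h_k - h_{k+1}$ and $M := \sqrt{100(1-\lambda)/\tw}$. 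Viewing this as a quadratic in $\sqrt{\delta_k}$ and solving gives $\sqrt{\delta_k}\leq M(\Psi_k-\Psi_{k+1}) + \sqrt{\sC(\Psi_k-\Psi_{k+1})u_k^\vartheta}$; a weighted Young inequality $\sqrt{\sC P u_k^\vartheta}\leq \tfrac{1}{2t}P + \tfrac{t}{2}\sC u_k^\vartheta$ then splits the cross term, and multiplying by $\sqrt{12/\sL}$ delivers the claimed form $d_k \leq C_3(\Psi_k-\Psi_{k+1}) + C_4\, u_k^\vartheta$.

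I expect the main obstacle to be essentially a matter of careful bookkeeping, in two places. First, the case split in the perturbed estimate must be executed so that a possibly negative $a_k$ (only $h_k\geq 0$, not $a_k\geq 0$, is assumed) does not break the application of \Cref{lem:mer-kl}; this is exactly where the $u_k$-perturbation is delicate. Second, pinning down the precise constants $\frac{150(1+2\mparam)}{(1-\lambda)^3}$ and $3(1+2\mparam)\sC\tw$ requires substituting the fixed window $\tw = \frac{(1-\lambda)^3}{50\sL(1+2\mparam)^2}$ and choosing the Young weight $t$ so that the $u_k^\vartheta$-coefficient collapses to exactly $3(1+2\mparam)\sC\tw$; the remaining $(\Psi_k-\Psi_{k+1})$-coefficient is then of the stated order $\frac{(1+2\mparam)}{(1-\lambda)^3}$. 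The genuine conceptual point, settled already in the first paragraph, is the recognition that $d_k$ must be governed by the squared descent decrement so that the entire stochastic error is carried inside the monotone potential $h_k$ rather than surfacing as a non-summable residual.
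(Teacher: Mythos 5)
Your proposal follows the paper's proof in every essential respect: you fold the aggregated error into the monotone potential $h_k=\mer(x^{\ti_k},z^{\ti_k})-f(x^*)+u_k$ via the approximate descent estimate, derive the perturbed {\L}ojasiewicz bound $\sC h_k^\vartheta\leq \|\nabla\mer(x^{\ti_k},z^{\ti_k})\|+\sC u_k^\vartheta$ from \Cref{lem:mer-kl} together with subadditivity of $t\mapsto t^\vartheta$ and the hypothesis $|\mer(x^{\ti_k},z^{\ti_k})-f(x^*)|<1$, and then use concavity of $t\mapsto t^{1-\vartheta}$ to relate $h_k-h_{k+1}$ to $\Psi_k-\Psi_{k+1}$. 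This is exactly the paper's argument (your case split on the sign of $a_k$ is unnecessary but harmless, since $h_k^\vartheta\leq(|a_k|+u_k)^\vartheta$ holds in both cases).

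The one place you genuinely deviate is the final algebraic extraction of $d_k$, and as executed it cannot deliver the stated constants. The paper keeps the two descent terms together, bounds $\bigl[\tfrac{d_k}{3(1+2\mparam)}+\tw\|\nabla\mer(x^{\ti_k},z^{\ti_k})\|\bigr]^2$ by $2a^2+2b^2$ against the numerator, and after $\sqrt{ab}\leq\tfrac{a}{2}+\tfrac{b}{2}$ the term $\tw\|\nabla\mer(x^{\ti_k},z^{\ti_k})\|$ appearing in the denominator $\tw\|\nabla\mer\|+\sC\tw u_k^\vartheta$ cancels \emph{exactly} against the same term on the left; this cancellation is what produces $\tfrac{150(1+2\mparam)}{(1-\lambda)^3}$ and $3(1+2\mparam)\sC\tw$. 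Your route decouples $d_k$ and $\|\nabla\mer\|$ through $\sqrt{\delta_k}$ and solves a quadratic, which forfeits the cancellation: the resulting leading coefficient on $\Psi_k-\Psi_{k+1}$ is at least $\sqrt{12/\sL}\cdot\sqrt{100(1-\lambda)/\tw}=100\sqrt{6}\,(1+2\mparam)/(1-\lambda)$ (using $\sL\tw=\tfrac{(1-\lambda)^3}{50(1+2\mparam)^2}$), which already exceeds $\tfrac{150(1+2\mparam)}{(1-\lambda)^3}$ for small $\lambda$ before the Young term is even added, and no choice of the Young weight $t$ repairs this. So you prove the lemma with constants of the same form but not the stated ones; since the downstream convergence and rate arguments only use that the constants have this form, the damage is cosmetic, but to obtain the lemma as written you should replace the quadratic-in-$\sqrt{\delta_k}$ step by the paper's bundled square and exact cancellation.
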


\emph{Proof of \Cref{thm:finite sum}.\ }
Setting $\beta_k=( \sum_{i=1}^k\alpha_i )^{r}$ in \eqref{step size-1} and noting $\sum_{k=1}^{\infty} \alpha_k=\infty$, we have $\Prob(\cS)=1$ according to \Cref{lem:err_estimate}. Moreover, the condition $\sum_{k=1}^{\infty} \alpha_k^2 \beta_k^{2}< \infty$ readily implies $\sum_{k=1}^{\infty} \alpha_k^2<\infty$. Thus, \Cref{Prop:convergence} is applicable. 
Let us define the master event $\cE$:
\begin{equation}
    \label{def:Event}
    \begin{aligned}
        \cE:=\cS& \cap \{\omega\in\Omega:\exists\; f^*\in\R\;\;   \text{s.t.}\;\; f(\sx^k(\omega)) \to f^*\; \text{and}\; f(\sz^k(\omega)) \to f^*\} \\&\cap \{\omega\in\Omega:\nabla f(\sx^k(\omega)) \to0,\, \sd_k(\omega)\to0\;\text{and}\;\|\sx^k(\omega)-\sz^k(\omega)\|\to0\}.
    \end{aligned}
\end{equation}
Clearly, $\Prob(\cE)=1$ due to \Cref{lem:err_estimate} and \Cref{Prop:convergence}. 
Let us fix $\omega\in\cE$ and consider the realizations $x^k\equiv\sx^k(\omega)$, $z^k\equiv\sz^k(\omega)$, $d_k\equiv \sd_k(\omega)$, $s_k\equiv \scs_k(\omega)$, etc..

If $\|x^k\| \nrightarrow \infty$, then $\{x^k\}_k$ has at least one accumulation point $x^*\in\Rn$. Since $\nabla f(x^k)\to0$, we conclude that $x^*\in\crit(f)$ and there exists a subsequence $\{x^{\ell_k}\}_k \subseteq \{x^k\}_k$ converging to $x^*$. Since $\{f(x^k)\}_k$ and $\{f(z^k)\}_k$ converge to some $f^*\in\R$ and we have $f(x^{\ell_k})\to f(x^*)$, 
we can infer $f(x^*)=f^*$. Thus, there is $K_f \geq 1$ such that 
\begin{equation}
    \label{eq:thm:kl-1}
  \max\{ |f(x^k) - f(x^*)|,|f(z^k) - f(x^*)|\} < \min\{1,\eta\}\quad \text{for all}\;k \geq K_f.
\end{equation}

Next, we verify that $\{u_k^\vartheta\}_k$ is summable. Due to $\sum_{k=1}^\infty \beta_{\ti_k}^2 s_k^2 < \infty$ and $\beta_k\to\infty$, there is $K \geq  \max\{K_\delta,K_f\}$ such that $\sum_{k=t}^\infty \beta_{\ti_k}^2 s_k^2 <1$, $\beta_{\ti_t} > 1$, and $\tg_{\gamma_t,\gamma_{t+1}} \geq \delta\tw$ for all $t \geq K$ ($K_\delta$ is defined as in \Cref{lem:time-length}). Since $\{\beta_k\}_k$ is non-decreasing and we have $\vartheta r> \frac12$, it follows
\begin{equation*}
    \label{thm:sum-s_k}
    {\sum}_{k=t}^\infty \Big({\sum}_{i=k}^\infty s_i^2\Big)^\vartheta \leq  {\sum}_{k=t}^\infty \Big(\beta_{\ti_k}^{-2}{\sum}_{i=k}^\infty \beta_{\ti_i}^2s_i^2\Big)^{\vartheta} \leq
    {\sum}_{k=t}^\infty \Big( {\sum}_{i=1}^{\ti_k}\alpha_i \Big)^{-2\vartheta r} <\infty,
\end{equation*}
where the last inequality uses ${\sum}_{i=1}^{\ti_k}\alpha_i \geq \sum_{i=K}^{k-1} \tg_{\gamma_i,\gamma_{i+1}} + {\sum}_{i=1}^{\ti_K-1}\alpha_i \geq \delta \tw(k-K)$. Hence, by the definition of $\{u_k\}_k$, we conclude that
\begin{equation}
    \label{thm:sum-u_k}
{\sum}_{k=t}^\infty u_k^\vartheta \to 0 \quad \text{as $t$ tends to infinity.}   
\end{equation}

Due to $x^{\ell_k} \to x^*$, $\max_{\ell\in\Ti_k}\|x^\ell - x^{\ti_k}\|\to0$, and $\bigcup_k \Gamma_k = \N$, there is a subsequence of $\{x^{\ti_k}\}_k$ converging to $x^*$. Moreover, utilizing \Cref{lem:approx-desent}, $f(z^k) \to f(x^*)$, $\|x^k-z^k\|\to0$, and $u_k\to0$, it holds that $\mer(x^{\ti_k}, z^{\ti_k}) + u_k \downarrow f(x^*)$ and $\Psi_k\downarrow0$. Hence, for any given $\rho>0$ with $\cB_{\rho}(x^*) \subseteq U(x^*)$,  there is $t \geq K$ such that $|\mer(x^{\ti_t},z^{\ti_t}) - f(x^*)| < 1$, $z^{\ti_k}, x^{\ti_k} \in V(x^*)$ (cf$.$ \eqref{eq:thm:kl-1}), and
\begin{equation}
	\label{eq:thm-kl-rho}
\max\{\|x^{\ti_{t}} - x^*\|,\|z^{\ti_{t}} - x^*\|\}  + \sM \Psi_{t}+ 3(1+2\mparam)\sC\tw {\sum}_{i=t}^\infty u_i^\vartheta < \rho, 
\end{equation}
where $\sM := \frac{150(1+2\mparam)}{(1-\lambda)^3}$. We now show that the following statements hold for all $k \geq t$: 
	\begin{enumerate}[label=(\alph*),topsep=4pt,itemsep=0ex,partopsep=0ex]
	\item $x^{\ti_k},z^{\ti_k} \in \mathcal{B}_{\rho}(x^*)$ and $|f(x^{\ti_k}) - f^*| < \min\{1,\eta\}$, $|f(z^{\ti_k}) - f^*| < \min\{1,\eta\}$.
 \vspace{1mm}
	\item $\sum_{i=t}^{k}d_i  \leq \sM[\Psi_t - \Psi_{k+1}] + 3(1+2\mparam)\sC\tw \sum_{i=t}^k u_i^\vartheta.$
	\end{enumerate}
We prove these statements by induction. By \Cref{lem:kl-ineq} and the definition of the index $t$, (a) and (b) hold for $k=t$. Assume there is $m > t$ such that the statements (a) and (b) are valid for $k=m$. Let us consider $k=m+1$. From \eqref{eq:thm:kl-1}, it follows $\max\{|f(x^{\ti_{m+1}}) - f^*|,|f(z^{\ti_{m+1}}) - f^*|\} < \min\{1,\eta\}$. Next, we show that $x^{\ti_{m+1}},z^{\ti_{m+1}}\in \mathcal{B}_\rho(x^*)$. Using the triangle inequality and statement (b) for $k=m$, 
	\[\begin{aligned}
		\|x^{\ti_{m+1}} - x^*\| &\leq \|x^{\ti_{m+1}} - x^{\ti_{m}}\| + \|x^{\ti_{m}} - x^{\ti_{t}} \| +\|x^{\ti_{t}} - x^*\| \leq \|x^{\ti_{t}} - x^*\| +  {\sum}_{i=t}^{m}d_i \\ &\leq \|x^{\ti_{t}} - x^*\| + \sM[\Psi_{t} - \Psi_{m+1}] + 3(1+2\mparam)\sC\tw {\sum}_{i=t}^m u_i^\vartheta < \rho,
	\end{aligned}\]
	where the last inequality follows from \eqref{eq:thm-kl-rho} and $\Psi_{k} \geq 0$ for all $k\geq t$. Repeating this step for $z^{\ti_{m+1}}$, we can also show $z^{\ti_{m+1}}\in \mathcal{B}_\rho(x^*)$. This proves (a) for $k={m+1}$, implying that $x^{\ti_{m+1}}, z^{\ti_{m+1}} \in U(x^*)$ and $\max\{|f(x^{\ti_{m+1}}) - f^*|,|f(z^{\ti_{m+1}}) - f^*|\} < \min\{1,\eta\}$. Hence, \Cref{lem:kl-ineq} is applicable for $k=m+1$, i.e., we have
	\[d_{m+1} \leq \sM[\Psi_{m+1} - \Psi_{m+2}] + 3(1+2\mparam)\sC\tw u_{m+1}^\vartheta.\]
	Combining this inequality with the bound in (b) for $k=m$, we can infer ${\sum}_{i=t}^{m+1}d_i  \leq \sM[\Psi_t - \Psi_{m+2}] + 3(1+2\mparam)\sC\tw {\sum}_{i=t}^{m+1} u_i^\vartheta$, i.e.,
	 (b) is also valid for $k=m+1$. 
	 Therefore, both (a) and (b) hold for all $k \geq t$. Statement (b) and \eqref{eq:thm-kl-rho} then imply
	 \[
	 {\sum}_{k=t}^{\infty}d_k \leq \sM\Psi_t  + 3(1+2\mparam)\sC\tw {\sum}_{i=t}^\infty u_i^\vartheta < \rho <\infty.
	 \]
  According to the definition \eqref{def:d} of $d_k$ and following the discussions in \Cref{subsec:toy-example}, this summability condition ensures that $\{x^k\}_k$ is a Cauchy sequence and hence, the whole sequence $\{x^k\}_k$ converges to the stationary point $x^*$. Recalling $\Prob(\cE)=1$, this result holds in an almost sure sense. \hfill $\square$

\section{Convergence Rates}
As discussed in \Cref{rem:iter conv}, the convergence result in \Cref{thm:finite sum} guarantees that the sequence $\{\sx^k\}_k$ converges a.s$.$ to some $\sx^*:\Omega\to\crit(f)$ on $
\cX=\{\omega\in\Omega: {\liminf}_{k\to\infty} \; \|\sx^k(\omega)\|<\infty \}.$  Since the goal of this section is to quantify the (local) rates of $\SGDM$, we will investigate the behavior of $\{\sx^k\}_k$ on $\cX$. If $\Prob(\cX)=1$, then all of our following results hold almost surely.

\subsection{Main Result and Discussions}
We now provide convergence rates for $\SGDM$.
\begin{theorem}
\label{thm:convergence-rate}
Let \Cref{as:sgd,Assumption:1,Assumption:2} hold and let the sequence $\{\sx^k\}_k$ be generated by $\SGDM$ with $\lambda \in [0,1)$, $\nu \geq 0$. For a given function $g: \R \to \R_{++}$, we consider non-increasing step sizes $\{\alpha_k\}_k$ satisfying
\begin{equation} \label{eq:kl-stepsizes} 
{\sum}_{k=1}^{\infty}\alpha_k=\infty \quad \text{and} \quad {\sum}_{k=1}^{\infty}\alpha_k^2 g(\tg_k)^{2} <\infty, \quad \text{where} \quad \tg_k := {\sum}_{i=1}^{k}\alpha_i.
\end{equation}
\begin{enumerate}[label=\textup{\textrm{(\alph*)}},leftmargin=2em,topsep=0pt,itemsep=0ex,partopsep=0ex]
    \item If $g(x):=x^r$, $r>\frac12$, then $\{\sx^k\}_k$ converges to a $\crit(f)$-valued mapping $\sx^*:\Omega\to\crit(f)$ a.s$.$ on $\cX$. Moreover, there is an event $\cR$ with $\Prob(\cX) = \Prob(\cR)$ such that for all $\omega\in\cR$, the corresponding realizations $x^k \equiv \sx^k(\omega)$, $x^* \equiv \sx^*(\omega)$ satisfy
    \[
    \|x^k - x^*\| = \cO(\tg_k^{-\phi(\theta)}) \;\; \text{and} \;\;  \max\{|f(x^k)- f(x^*)|,\|\nabla f(x^k)\|^2\}= \cO(\tg_k^{-\psi(\theta)}),
    \]
     where $\theta\in[\frac12,1)$ denotes the {\L}ojasiewicz exponent at $x^*$ and the rate mappings $\phi, \psi$ are defined as $\phi(\theta):=\min\{r-\tfrac12,\tfrac{1-\theta}{2\theta-1}\}$ and $\psi(\theta):= \min\{2r,  \tfrac{1}{2\theta-1}\}$.
 \item If $g(x):=x^p \cdot \exp(rx)$, $r>0$, $p \geq 0$, then $\{\sx^k\}_k$ converges to a $\crit(f)$-valued mapping $\sx^*:\Omega\to\crit(f)$ a.s$.$ on $\cX$.  Moreover, there is $\cR\in\cF$ with $\Prob(\cX) = \Prob(\cR)$ such that for all $\omega\in\cR$, the realizations $x^k \equiv \sx^k(\omega)$, $x^* \equiv \sx^*(\omega)$ satisfy
      \[
    \|x^k - x^*\| = \cO({g(\tg_k)^{-1}}) \;\; \text{and} \;\;  \max\{|f(x^k)- f(x^*)|,\|\nabla f(x^k)\|^2\}= \cO({g(\tg_k)^{-2}}),
    \]
    when $\theta=\frac12$ and $r<\sC^2/400$. Here, the constant $\sC$ is from \Cref{lem:mer-kl}.
\end{enumerate}
\end{theorem}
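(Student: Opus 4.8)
The plan is to establish the rates by building on the machinery already developed for the iterate convergence proof (\Cref{thm:finite sum}), in particular the summability-based trajectory bound in \Cref{lem:kl-ineq} and the descent-type estimate from \Cref{lem:approx-desent}. The starting point is that on $\cX$, the iterates converge to some $x^* \in \crit(f)$ with {\L}ojasiewicz exponent $\theta$, and the quantity $\Psi_k \sim [\mer(x^{\ti_k},z^{\ti_k}) - f(x^*) + u_k]^{1-\vartheta}$ decreases to zero. The key observation is that the summability $\sum_{i=k}^\infty d_i$ controls the tail $\|x^{\ti_k} - x^*\|$, so to obtain a \emph{rate} I would track how fast $\Psi_k \to 0$ and how fast the stochastic-error tail $u_k = \frac{8}{(1-\lambda)\tw}\sum_{i=k}^\infty s_i^2$ decays. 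The step sizes \eqref{eq:kl-stepsizes} are precisely tuned so that, via the error estimate in \Cref{lem:err_estimate} with $\beta_{\ti_k} = g(\tg_{\ti_k})$, one gets $s_k^2 = o(g(\tg_{\ti_k})^{-2})$ a.s., giving a quantitative bound on $u_k$ in terms of $\tg_{\ti_k}$.

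First I would set up the deterministic recursion on the master event. From the approximate descent inequality, setting $E_k := \mer(x^{\ti_k},z^{\ti_k}) - f(x^*) + u_k$, I expect to derive a recursion of the form $E_{k+1} + c\,\tw\,E_k^{2\theta} \lesssim E_k + (\text{error tail})$, using the {\L}ojasiewicz inequality for $\mer$ (\Cref{lem:mer-kl}) to replace $\|\nabla\mer\|^2$ by $E_k^{2\theta}$. This is a standard ``$a_{k+1} \leq a_k - c\,a_k^{2\theta} + \veps_k$'' type recursion whose solution rate is classical: when $\theta = \frac12$ one gets geometric/exponential-in-$\tg_k$ decay, and when $\theta \in (\frac12,1)$ one gets the polynomial rate $E_k = \cO(\tg_k^{-1/(2\theta-1)})$, provided the perturbation $\veps_k$ decays fast enough. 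The translation from the window index $k$ to the true iteration via $\tg_{\ti_k} \asymp k\tw$ (from \Cref{lem:time-length}) converts rates in $k$ to rates in $\tg_k$. For part (a) with $g(x) = x^r$, the exponent $\psi(\theta) = \min\{2r, \frac{1}{2\theta-1}\}$ reflects the competition between the noise-floor rate $\tg_k^{-2r}$ and the intrinsic {\L}ojasiewicz rate $\tg_k^{-1/(2\theta-1)}$; the iterate rate $\phi(\theta)$ follows by summing $d_i$ from \Cref{lem:kl-ineq} and using $\|x^k - x^*\| \lesssim \sum_{i\geq k} d_i \lesssim \Psi_k + \sum_{i\geq k} u_i^\vartheta$.

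For part (b) the analysis specializes to $\theta = \frac12$, where the {\L}ojasiewicz inequality gives a genuine contraction $E_{k+1} \leq (1 - c\tw)E_k + \veps_k$ with $c\tw$ comparable to $\sC^2\tw$. The condition $r < \sC^2/400$ is what I would expect to be needed so that the contraction factor dominates the growth rate of $g$: iterating the recursion produces $E_k \lesssim g(\tg_{\ti_k})^{-2}$ precisely when $g^2$ grows slower than the accumulated contraction $\prod(1-c\tw)$, i.e. when $2r < c$ for the exponential factor $\exp(rx)$. The polynomial prefactor $x^p$ is harmless and gets absorbed. The main obstacle, and where I would spend the most care, is making the ``translation'' between the discrete contraction recursion indexed by time windows and the continuous-time scale $\tg_k$ rigorous while correctly propagating the stochastic error tail $u_k$: one must verify that $\sum u_k^\vartheta$ and the geometric accumulation of $\veps_k$ are both controlled by $g(\tg_{\ti_k})^{-2}$, and that the constant threshold $\sC^2/400$ emerges cleanly from the interplay of the $\frac{\tw}{100(1-\lambda)}$ coefficient in \Cref{lem:approx-desent}, the factor $2$ in $\|\nabla f(z)\|^2 \leq 2\|\nabla\mer\|^2$ from \eqref{grad-ineq}, and the {\L}ojasiewicz constant $\sC$.
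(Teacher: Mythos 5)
Your proposal follows essentially the same route as the paper's proof: the recursion $y_{k+1} \leq y_k - \sC_\lambda y_k^{2\theta} + (\text{noise tail})$ on $y_k = \mer(x^{\ti_k},z^{\ti_k}) - f^* + u_k$ obtained from \Cref{lem:approx-desent} and \Cref{lem:mer-kl}, the bound $u_k = o(g(\tg_{\ti_k})^{-2})$ from \Cref{lem:err_estimate} with $\beta_k = g(\tg_k)$, a Chung-type resolution of the recursion, the translation $\tg_{\ti_k}\asymp k\tw$ back to the original time scale, the iterate rate via summing $d_k$ from \Cref{lem:kl-ineq} with an optimized adjusted exponent $\vartheta$, and for part (b) the geometric contraction whose factor must beat $\exp(2r\tw)$, which is exactly where $r<\sC^2/400$ enters. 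This matches the paper's argument in structure and in all the key quantitative ingredients.
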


The derivation of rates for $\SGDM$ is technical and is deferred to \Cref{proof:thm:convergence-rate}. Here, we would like to highlight how our proof strategy differs from other existing ones. In the classical deterministic setting, {\L}ojasiewicz inequality-based rate analyses typically necessitate the step sizes $\{\alpha_k\}_k$ to be constant or bounded away from zero, see, e.g., \cite{AttBol09,BolSabTeb14,fragarpey15,li2018calculus,OchCheBroPoc14}. By contrast, in $\SGDM$, we need to utilize diminishing step sizes, $\alpha_k \to 0$, to mitigate the stochastic errors and to ensure convergence. Hence, existing techniques for deterministic algorithms can not be directly applied. A more related rate analysis with diminishing step sizes appeared earlier in \cite{li2023convergence} for the random reshuffling ($\RR$) method. However, the update errors in $\RR$ (when approximating full gradients) can be controlled in a fully deterministic way and hence, the strategies in \cite{li2023convergence} can not be transferred to $\SGDM$. Additional comparisons are made after \Cref{coro:local-rate}. The verification of \Cref{thm:convergence-rate} will again rely on the introduced time window techniques and on suitable aggregations of the stochastic errors. 

The convergence rates in \Cref{thm:convergence-rate} are applicable to a wide spectrum of step size strategies. To use \Cref{thm:convergence-rate}, we simply need to verify if the step sizes fulfill the conditions in \eqref{eq:kl-stepsizes}. Substituting explicit expressions for $\tg_k$ in \Cref{thm:convergence-rate} then yields specific convergence rates. In the following, we illustrate the application of \Cref{thm:convergence-rate} for polynomial step sizes. The proof is presented in \Cref{proof:coro rate}.

\begin{corollary}
	\label{coro:local-rate}
	Under \Cref{as:sgd,Assumption:1,Assumption:2}, let $\{\sx^k\}_k$ be generated with $\alpha_k = \alpha/(k+\beta)^\gamma$ and $ \alpha>0$, $\beta,\nu\geq0$, $\gamma\in(\tfrac23,1]$, and $\lambda \in [0,1)$. Then, $\{\sx^k\}_k$ converges a.s$.$ to some $\sx^*:\Omega\to\crit(f)$ on $\cX$. Moreover, there is $\cR\in\cF$ with $\Prob(\cX) = \Prob(\cR)$ such that for all $\omega\in\cR$, the following rates hold for realizations $x^k \equiv \sx^k(\omega)$, $x^* \equiv \sx^*(\omega)$:
\begin{enumerate}[label=\textup{\textrm{(\alph*)}},leftmargin=2em,topsep=2pt,itemsep=0.5ex,partopsep=0ex]
		\item If $\gamma \in (\frac23,1)$, then for any $\varepsilon>0$, we have $\|x^k - x^*\| = o(1/k^{\Phi(\gamma,\theta) - \varepsilon})$,
  \[|f(x^k)- f(x^*)|= o(1/k^{\Psi(\gamma,\theta) - \varepsilon}) \;\; \text{and} \;\;  \|\nabla f(x^k)\|^2 = o(1/k^{\Psi(\gamma,\theta) - \varepsilon}),\]
  where $\theta\in[\frac12,1)$ is the {\L}ojasiewicz exponent at $x^*$ and the mappings $\Phi,\Psi$ are defined as $\Phi(\gamma,\theta):=\min\{\tfrac32\gamma-1,\tfrac{(1-\gamma)(1-\theta)}{2\theta-1}\}$ and $\Psi(\gamma,\theta):=\min\{2\gamma-1,\tfrac{1-\gamma}{2\theta-1}\}$.
  \item If $\gamma=1$, $\theta=\frac12$ and $\alpha>200/\sC^2$, it holds that $\|x^k - x^*\| = o(\log(k)^{\tfrac12+\varepsilon}/\sqrt{k})$,
  \[|f(x^k)- f(x^*)|= o(\log(k)^{1+\varepsilon}/k) \;\; \text{and} \;\;  \|\nabla f(x^k)\|^2 = o(\log(k)^{1+\varepsilon}/k), \quad \forall \; \varepsilon>0,\]
 where $\theta$ and $\sC$ denote the {\L}ojasiewicz exponent and constant (of $f$ and $\mer$) at $x^*$.
  \end{enumerate}
\end{corollary}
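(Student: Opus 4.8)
The plan is to prove Corollary~\ref{coro:local-rate} by specializing Theorem~\ref{thm:convergence-rate} to the polynomial step sizes $\alpha_k = \alpha/(k+\beta)^\gamma$, converting the abstract rates stated in terms of $\tg_k = \sum_{i=1}^k \alpha_i$ into explicit powers of $k$ (or powers of $\log k$ in the critical case). The entire argument is thus a two-stage reduction: first verify that the chosen step sizes satisfy the hypotheses \eqref{eq:kl-stepsizes} for the appropriate choice of the function $g$, and second substitute the asymptotics of $\tg_k$ into the mappings $\phi,\psi$ (resp. $g(\tg_k)$) furnished by Theorem~\ref{thm:convergence-rate}.

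**Step 1: Asymptotics of $\tg_k$.** First I would record the standard estimates for the partial sums of $\alpha_k$. For $\gamma \in (\tfrac23,1)$, we have $\tg_k = \sum_{i=1}^k \alpha/(i+\beta)^\gamma = \Theta(k^{1-\gamma})$, by comparison with $\int^k x^{-\gamma}\,dx$. For $\gamma=1$, we instead get $\tg_k = \Theta(\log k)$. These are elementary integral-comparison facts, so I would state them without belaboring the constants.

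**Step 2: Case $\gamma\in(\tfrac23,1)$ (part (a)).** Here I set $g(x):=x^r$ and invoke Theorem~\ref{thm:convergence-rate}(a). I must first choose $r>\tfrac12$ so that the summability condition $\sum_k \alpha_k^2 \tg_k^{2r} < \infty$ holds. Since $\alpha_k^2 \tg_k^{2r} = \Theta(k^{-2\gamma} \cdot k^{2r(1-\gamma)}) = \Theta(k^{-2\gamma + 2r(1-\gamma)})$, convergence requires $2\gamma - 2r(1-\gamma) > 1$, i.e. $r < \tfrac{2\gamma-1}{2(1-\gamma)}$. Because $\gamma>\tfrac23$ guarantees $\tfrac{2\gamma-1}{2(1-\gamma)} > \tfrac12$, the admissible interval for $r$ is nonempty, and I can take $r$ arbitrarily close to this supremum. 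Substituting $\tg_k^{-\phi(\theta)} = \Theta(k^{-(1-\gamma)\phi(\theta)})$ and $\tg_k^{-\psi(\theta)} = \Theta(k^{-(1-\gamma)\psi(\theta)})$ with $\phi(\theta)=\min\{r-\tfrac12,\tfrac{1-\theta}{2\theta-1}\}$ and $\psi(\theta)=\min\{2r,\tfrac1{2\theta-1}\}$, I then optimize the exponents over the allowed $r$. Letting $r \uparrow \tfrac{2\gamma-1}{2(1-\gamma)}$, the term $(1-\gamma)(r-\tfrac12)$ tends to $\tfrac32\gamma-1$ and the term $(1-\gamma)\cdot 2r$ tends to $2\gamma-1$, so the limiting exponents are exactly $\Phi(\gamma,\theta)=\min\{\tfrac32\gamma-1,\tfrac{(1-\gamma)(1-\theta)}{2\theta-1}\}$ and $\Psi(\gamma,\theta)=\min\{2\gamma-1,\tfrac{1-\gamma}{2\theta-1}\}$. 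Since the supremum in $r$ is not attained, I would express the result with an arbitrary loss $\varepsilon>0$, which is precisely the little-$o$ form stated. I would also note that the gradient bound follows from $\|\nabla f(x^k)\|^2 \le 2\|\nabla\mer(x^k,z^k)\|^2$ via \eqref{grad-ineq} together with $\|x^k-z^k\|\to 0$.

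**Step 3: Case $\gamma=1$, $\theta=\tfrac12$ (part (b)).** Here the partial sums grow only logarithmically, $\tg_k=\Theta(\log k)$, so a polynomial $g$ can never make $\sum_k \alpha_k^2 g(\tg_k)^2$ converge without killing the rate; the exponential weighting in Theorem~\ref{thm:convergence-rate}(b) is designed for exactly this. I set $g(x):=x^p\exp(rx)$ and must choose $p\ge 0$, $r>0$ so that both $\sum_k \alpha_k^2 g(\tg_k)^2<\infty$ holds and the constraint $r<\sC^2/400$ of part~(b) is met. Writing $\tg_k = \log k + c + o(1)$ for a constant $c$ (the refined integral estimate, where the constant absorbs $\alpha$ and $\beta$), we get $g(\tg_k)^2 = \Theta((\log k)^{2p} \cdot k^{2r})$ after $\exp(2r\log k)=k^{2r}$, so $\alpha_k^2 g(\tg_k)^2 = \Theta(k^{-2}(\log k)^{2p} k^{2r})$, which is summable iff $2-2r>1$, i.e. $r<\tfrac12$, with $p$ then free. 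The hypothesis $\alpha>200/\sC^2$ is what forces the scaling $r\log k$ to have the right constant: since $g(\tg_k)^{-1}=\Theta((\log k)^{-p} k^{-r})$, the rate $\|x^k-x^*\|=\cO(g(\tg_k)^{-1})$ becomes $\cO((\log k)^{-p} k^{-r})$. To reach the stated $o(\log(k)^{1/2+\varepsilon}/\sqrt k)$ I would take $r\uparrow\tfrac12$ and track the interplay between $\alpha$ (which fixes the leading constant of $\tg_k \approx \alpha\log k$) and the admissible $r$; the condition $\alpha>200/\sC^2$ guarantees $r=\tfrac12$ is reachable in the limit while respecting $r<\sC^2/400$, and the polynomial factor $(\log k)^p$ supplies the $\log(k)^{1/2+\varepsilon}$ slack. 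I would convert the resulting $\cO$ bound at the limiting exponent into the $o$-form with the logarithmic correction, again using an arbitrarily small $\varepsilon$.

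**Main obstacle.** The routine part is the integral-comparison asymptotics of $\tg_k$; the genuinely delicate step is the bookkeeping in Step~3, where I must simultaneously (i) track the multiplicative constant in $\tg_k \sim \alpha\log k$, (ii) respect the two competing constraints $r<\tfrac12$ (from summability) and $r<\sC^2/400$ (from the theorem), and (iii) see that the hypothesis $\alpha>200/\sC^2$ is exactly the condition reconciling these so that the exponent $r$ can be pushed to $\tfrac12$ and recover the $\sqrt k$ rate. I expect verifying this constant-chasing — and confirming that the logarithmic slack $(\log k)^{1/2+\varepsilon}$ emerges correctly from the polynomial factor and the limiting procedure — to be where the argument requires the most care.
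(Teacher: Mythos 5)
Your Step 2 (part (a)) is essentially the paper's argument: the paper fixes $\varepsilon\in(0,3\gamma-2)$, sets $r=\frac{2\gamma-1}{2(1-\gamma)}-\frac{\varepsilon}{4(1-\gamma)}>\frac12$, checks $\sum_k\alpha_k^2\tg_k^{2r}<\infty$, and converts the resulting $\cO(k^{-(1-\gamma)\psi(\theta)})$ and $\cO(k^{-(1-\gamma)\phi(\theta)})$ bounds into the stated $o(k^{-\Psi(\gamma,\theta)+\varepsilon})$ and $o(k^{-\Phi(\gamma,\theta)+\varepsilon})$ forms. That part is correct.

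Your Step 3 (part (b)) has a genuine gap. First, a computational slip that matters: $\tg_k=\alpha\log k+\cO(1)$, so the multiplicative constant $\alpha$ does \emph{not} get absorbed into an additive constant; with $g(x)=x^p\exp(rx)$ one has $\exp(2r\tg_k)=\Theta(k^{2r\alpha})$, and the summability condition is $r\alpha<\tfrac12$ (not $r<\tfrac12$), while the theorem's constraint is $r<\sC^2/400$. Reconciling ``$r\alpha$ close to $\tfrac12$'' with ``$r<\sC^2/400$'' is exactly what forces $\alpha>200/\sC^2$; in your normalization this mechanism disappears and the hypothesis on $\alpha$ plays no role. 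Second, and more fundamentally, your plan to take $r$ strictly below the critical value and ``push $r$ up in the limit'' cannot deliver the claimed rate: for each admissible $r$ with $r\alpha<\tfrac12$ you only get $\|x^k-x^*\|=\cO(k^{-r\alpha})$, and knowing this for every such $r$ does not imply $o(\log(k)^{1/2+\varepsilon}/\sqrt{k})$ (e.g.\ $k^{-1/2}\log k$ is $\cO(k^{-\rho})$ for all $\rho<\tfrac12$ but is not $o(\log(k)^{1/2+\varepsilon}/\sqrt k)$ for small $\varepsilon$). The paper instead takes $r=\frac{1}{2\alpha}$ \emph{exactly} (admissible precisely because $\alpha>200/\sC^2$) and chooses $g(x)=x^{-p}\exp(x/(2\alpha))$ with $p=\tfrac12+\tfrac{\varepsilon}{4}$: the \emph{negative} power of $x$ is what rescues summability at the critical exponent, since then $\alpha_k^2 g(\tg_k)^2=\cO(1/(k\log(k)^{2p}))$ with $2p>1$, and $g(\tg_k)^{-1}\leq \alpha^p(1+\log k)^p/\sqrt{k}$ gives the stated logarithmic correction directly. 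Your claim that ``$p$ is then free'' is therefore wrong at the critical scaling, and without the negative power (or an equivalent device) the argument does not close.
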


\Cref{thm:convergence-rate} and \Cref{coro:local-rate} provide novel insights into the asymptotic behavior of $\SGDM$. Specifically, the derived rates are obtained under a more general framework while being faster and improving the existing results even for $\SGD$. We now provide further comparisons and remarks on a.s$.$ convergence rates. 
\begin{itemize}[leftmargin=4.5ex]
    \item \emph{Function value rates.} Liu and Yuan \cite[Theorems {8 and 13}]{liu2022almost} establish convergence rates for $\SHB$ in the strongly convex and convex case\footnote{{Liu and Yuan \cite[Theorems {9 and 13}]{liu2022almost} also establish similar rates for SNAG in the same strongly convex / convex setting.}}. Using polynomial step sizes, $\alpha_k \sim 1/k^{\gamma}$, (a.s.) rates of the form $f(x^k) - f(x^*) = o(1/k^{1-\varepsilon})$ for strongly convex $f$ and $f(x^k) - f(x^*) = o(1/k^{\frac13 -\varepsilon})$ for general convex $f$ are shown in \cite{liu2022almost}. Our results in \Cref{coro:local-rate} can recover the rates in the strongly convex case (i.e., when $\theta = \frac12$) and we can achieve the improved rate $o(\log(k)^{1+\varepsilon}/k)$ when $\gamma=1$. Due to $\Psi(\gamma,\theta) > \frac13$ for all $\theta\in [\frac12,1)$, we generally obtain faster convergence compared to the convex setting studied in \cite{liu2022almost}. For $\SHB$ with particular parameter choices (i.e., $\lambda = \lambda_k\to1$ and $\alpha_k\sim 1/k^{\frac32+\varepsilon}$), Sebbouh et al$.$ derived the rate $f(x^k) - f(x^*) = o(1/k^{\frac12-\varepsilon})$ in the convex case, \cite[Corollary 17]{sebbouh2021almost}. Based on \Cref{coro:local-rate}, in the nonconvex setting, we can match this result if $\gamma \geq \frac34$ and $\gamma+\theta \leq \frac32$.
\item \emph{Iterate rates.} In \cite[Theorem 2.2]{tadic2015convergence}, Tadi\'{c} established rates for $\SGD$-iterates that are more related to our results. If $\gamma\in(\frac34,1)$, Tadi\'{c}'s rates are given by 
\[ \|x^k-x^*\| = o(1/k^{\Phi^\circ(\gamma,\theta) - \varepsilon}) \quad \text{where}\quad 
\Phi^\circ(\gamma,\theta) = \min\{2\gamma - \tfrac32, \tfrac{(1-\theta)(1-\gamma)}{2\theta-1}\}.\]
Due to $2\gamma-\frac32 \leq \frac32\gamma-1$, $\gamma \leq 1$, this rate is slower than our results in \Cref{coro:local-rate} (a). Moreover, \Cref{coro:local-rate} (b) allows us to further cover the case $\gamma=1$ (i.e., $\alpha_k \sim 1/k$). In this scenario, we obtain the iterate rate $o(\log^{\frac12+\varepsilon}(k)/\sqrt{k})$.

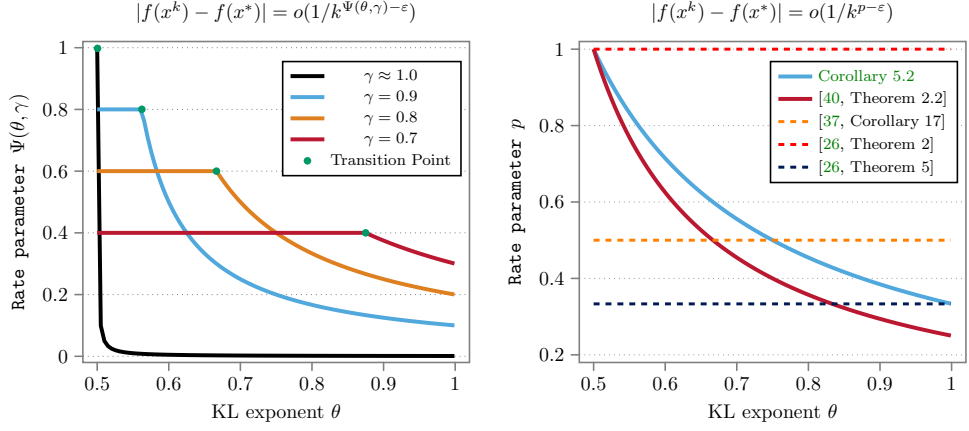
\begin{figure}[t]
\centering

\pgfmathsetmacro{\ga}{0.999}
\pgfmathsetmacro{\gb}{0.9}
\pgfmathsetmacro{\gc}{0.8}
\pgfmathsetmacro{\gd}{0.7}

\pgfmathsetmacro{\xa}{\ga/(4*\ga - 2)}
\pgfmathsetmacro{\ya}{1/(4*\xa-1)}
\pgfmathsetmacro{\xb}{\gb/(4*\gb - 2)}
\pgfmathsetmacro{\yb}{1/(4*\xb - 1)}
\pgfmathsetmacro{\xc}{\gc/(4*\gc - 2)}
\pgfmathsetmacro{\yc}{1/(4*\xc - 1)}
\pgfmathsetmacro{\xd}{\gd/(4*\gd - 2)}
\pgfmathsetmacro{\yd}{1/(4*\xd - 1)}

\begin{tikzpicture}[scale = 0.95]
\begin{axis}[
 at={(0,0)},
    domain=0.5:0.999,
    samples=100,
    title={{\small $|f(x^k) - f(x^*)| = o(1/k^{\Psi(\theta,\gamma)-\varepsilon})$}}, 
    xlabel={{\small {\L}ojasiewicz exponent $\theta$}},
    ylabel={{\small \texttt{Rate parameter $\Psi(\theta,\gamma)$}}},
    ymin=-0.02, ymax=1.02,
    xmin=0.48, xmax=1.02,
    legend pos=north east,
    ytick={0, 0.2, ..., 1},
    xtick={0.5, 0.6, 0.7, 0.8, 0.9, 1},
    restrict y to domain=0:1,
   tick align=outside,
    tickpos=left, 
    tick style={draw=black!50!, line cap=round, major tick length=4pt, minor tick length=2pt,  thick}, 
    axis line style={draw=black!50,  thick}, 
    every axis plot/.append style={thick},
    legend style={at={(.98,.94)}, draw=black, thick, font=\footnotesize},
    axis background/.style={fill=red!0},
    every axis/.append style={font=\footnotesize},
    grid=major,
    grid style={line width=.5pt, draw=gray!100, dotted},
    ymajorgrids=true,
    xmajorgrids=false,
]
\addplot[opacity=0.8, black, line width=2pt] 
    {min(2*\ga - 1, (1-\ga)/(2*x-1))};
\addplot[MyBlue, line width=2pt] 
    {min(2*\gb - 1, (1-\gb)/(2*x-1))};
\addplot[orange!50!brown, line width=2pt] 
    {min(2*\gc - 1, (1-\gc)/(2*x-1))};
\addplot[UMRed, line width=2pt] 
    {min(2*\gd - 1, (1-\gd)/(2*x-1))};
    
\addplot[green!60!blue, mark=*, only marks, mark size=1.5pt] coordinates {(\xa, \ya)};

\legend{$\gamma \approx 1.0$,$\gamma =0.9$,$\gamma =0.8$,$\gamma =0.7$, Transition Point};
\addplot[green!60!blue, mark=*, only marks, mark size=1.5pt] coordinates {(\xb, \yb)};
\addplot[green!60!blue, mark=*, only marks, mark size=1.5pt] coordinates {(\xc, \yc)};
\addplot[green!60!blue, mark=*, only marks, mark size=1.5pt] coordinates {(\xd, \yd)};

\end{axis}

\begin{axis}[
 at={(8.8cm,0)},
    domain=0.5:0.999,
    samples=100,
    title={{\small $|f(x^k) - f(x^*)| = o(1/k^{p-\varepsilon})$}},
    xlabel={{\small {\L}ojasiewicz exponent $\theta$}},
    ylabel={{\small \texttt{Rate parameter $p$}}},
    ymin=0.18, ymax=1.02,
    xmin=0.48, xmax=1.02,
    legend pos=north east,
    ytick={0, 0.2,0.4, ..., 1},
    xtick={0.5, 0.6, 0.7, 0.8, 0.9, 1},
    restrict y to domain=0:1,
   tick align=outside,
    tickpos=left, 
    tick style={draw=black!50, line cap=round, major tick length=4pt, minor tick length=2pt,  thick}, 
    axis line style={draw=black!50,  thick}, 
    every axis plot/.append style={thick},
    legend style={at={(.98,.94)}, draw=black, thick, font=\footnotesize},
    legend cell align=left,
    axis background/.style={fill=red!0},
    every axis/.append style={font=\footnotesize}, 
    grid=major,
    grid style={line width=.5pt, draw=gray!80, dotted},
    ymajorgrids=true,
    xmajorgrids=false,
]
\addplot[MyBlue, line width=2pt] {1/(4*x-1)};
\addplot[UMRed, line width=2pt] 
   {1/(6*x-2)};
\addplot[opacity=1, orange, line width=1.5pt, dashed] 
    {1/2};
\addplot[opacity=1, redp, line width=1.5pt, dashed] 
{1};
\addplot[opacity=1, OxfordBlue, line width=1.5pt, dashed] 
    {1/3};

\legend{\Cref{coro:local-rate},\cite[Theorem 2.2]{tadic2015convergence}, \cite[Corollary 17]{sebbouh2021almost},\cite[Theorem 8]{liu2022almost}, \cite[Theorem 13]{liu2022almost}};
\end{axis}
\end{tikzpicture}

\caption{Left: Function value rates in \Cref{coro:local-rate} for different $\gamma$. Right: Plot of the rate $p$ for the optimal choice of $\gamma$. The rates in \cite[Theorems 8 and 13]{liu2022almost} and \cite[Corollary 17]{sebbouh2021almost} are derived under (strong) convexity and do not depend on $\theta$.
}
\label{fig:vis}
\end{figure}
\item \emph{On the choice of $\gamma$.} When the {\L}ojasiewicz exponent $\theta$ is known, the rate functions in \Cref{coro:local-rate} (a) can be optimized in terms of $\gamma$. In particular, the optimal choice is given by $\gamma^* = \frac{2\theta}{4\theta-1}$ resulting in $\Psi(\gamma^*,\theta) = \frac{1}{4\theta-1}$, $\Phi(\gamma^*,\theta) = \frac{1-\theta}{4\theta-1}$ and $|f(x^k)-f(x^*)| = o(1/k^{\frac{1}{4\theta-1} - \varepsilon})$\footnote{The same almost sure and optimal rate for $\{f(x^k)\}_k$ was recently shown for $\SHB$ in the preprint \cite[Theorem 4.2]{weissmann2024almost}. The result in \cite{weissmann2024almost} requires the global $\theta$-Polyak-{\L}ojasiewicz condition to hold (a global variant of the {\L}ojasiewicz inequality stated in \Cref{Assumption:2}).} and $\|x^k-x^*\| = o(1/k^{\frac{1-\theta}{4\theta-1} - \varepsilon})$ for all $\varepsilon>0$.
%
%
Similarly, the optimal choice of $\gamma$ for $\Phi^\circ(\gamma,\theta)$ is $\gamma^*=\frac{4\theta-1}{6\theta-2}$ and the rates in \cite{tadic2015convergence} reduce to $|f(x^k)-f(x^*)| = o(1/k^{\frac{1}{6\theta-2} - \varepsilon})$ and $\|x^k-x^*\| = o(1/k^{\frac{1-\theta}{6\theta-2} - \varepsilon})$. In \Cref{fig:vis}, we provide a visualization of the obtained rates in different scenarios.  
\end{itemize}

\subsection{Derivations of Rates under Polynomial Step Sizes}\label{proof:coro rate}
This subsection presents the proof of \Cref{coro:local-rate}. W.l.o.g., we assume $\beta = 0$, i.e., $
    \alpha_k = \alpha/k^\gamma$. Clearly, we have $\alpha_k\to0$ and $\tg_k = \sum_{i=1}^{k}\alpha_i\to\infty$ as $k\to\infty$. To derive the claimed rates, we consider the cases $\gamma \in (\frac23,1)$ and $\gamma = 1$ separately. \\[1mm]
\noindent \textbf{Part (a).} When $\gamma \in (\frac23,1)$, we fix $\varepsilon \in (0, 3\gamma - 2)$ and set $r = \frac{2\gamma - 1}{2(1-\gamma)}-\frac{\varepsilon}{4(1-\gamma)}> \frac12$. By the integral test, we have $\tg_k =\sum_{i=1}^{k}\alpha_i= \Theta(k^{1-\gamma})$. Since $r<\frac{2\gamma-1}{2(1-\gamma)}$, it follows that $\sum_{k=1}^\infty \alpha_k^2 \tg_k^{2r} = \cO(\sum_{k=1}^\infty 1/k^{2\gamma-2(1-\gamma)r})<\infty$. 
Hence, \Cref{thm:convergence-rate} (a) is applicable. As a result, $\{\sx^k\}_k$ converges a.s.\ to some $\sx^*:\Omega\to\crit(f)$ on $\cX$ and there is $\cR\in\cF$ with $\Prob(\cR) = \Prob(\cX)$ such that the realizations $x^k \equiv \sx^k(\omega)$, $x^* \equiv \sx^*(\omega)$, $\omega\in\cR$, satisfy
 \[ \max\{|f(x^k)-f^*|,\|f(x^k)\|^2\} = \cO(1/k^{(1-\gamma)\psi(\theta)}) \quad \text{and} \quad  \|x^k-x^*\| = \cO(1/k^{(1-\gamma)\phi(\theta)}),\]
where $f^*:=f(x^*)$, $\psi(\theta)=\min\{2r,\frac{1}{2\theta-1}\}$, and $\phi(\theta)=\min\{r-\frac12,\frac{1-\theta}{2\theta-1}\}$. Next, we compare $(1-\gamma)\psi(\theta)$ with $\Psi(\gamma,\theta)=\min\{2\gamma-1,\tfrac{1-\gamma}{2\theta-1}\}$. Noting $\tfrac{1-\gamma}{2\theta-1} \geq \Psi(\gamma,\theta)$ and 
\[ 
 2r(1-\gamma) = 2\gamma-1- \varepsilon/2 > \Psi(\gamma,\theta)- \varepsilon,  \quad \text{we have} \quad 
 (1-\gamma)\psi(\theta) > \Psi(\gamma,\theta)- \varepsilon.\]
 Thus, we can re-express the rates for $|f(x^k)-f^*|$ and $\|f(x^k)\|^2$ as follows:
 \[|f(x^k)-f^*| = o(1/k^{\Psi(\gamma,\theta) - \varepsilon}) \quad \text{and} \quad \|f(x^k)\|^2= o(1/k^{\Psi(\gamma,\theta) - \varepsilon}) .
	 \]
Similarly, we can derive $\|x^k-x^*\| = o(1/k^{\Phi(\gamma,\theta) - \varepsilon})$ by showing that $(1-\gamma)\phi(\theta) > \Phi(\gamma,\theta)- \varepsilon$, where $\Phi(\gamma,\theta)=\min\{\frac32\gamma-1,\tfrac{(1-\gamma)(1-\theta)}{2\theta-1}\}$.\\[-2mm]

  \noindent \textbf{Part (b).} When $\gamma=1$, for any $\varepsilon>0$, we define $g(x) = x^{-p} \cdot \exp(\frac{x}{2\alpha})$ with $p = \frac12 + \frac{\varepsilon}{4}$. By the integral test, we obtain $\alpha \log(k) \leq \tg_k \leq \alpha(1+\log(k))$ and
   	\[
	{\sum}_{k=1}^{\infty}\;\alpha_k^2 \cdot  g(\tg_k)^2 \leq \alpha^{2-2p}  \exp(1) \cdot {\sum}_{k=1}^{\infty}\; 1/(k\cdot\log(k)^{2p}) <\infty.
	\] 
The result is due to \Cref{thm:convergence-rate} (b) and $\frac{1}{g(\tg_k)} \leq \frac{\alpha^p[1+\log(k)]^p}{\sqrt{k}}=o({\log(k)^{\frac{1+\varepsilon}{2}}}/{\sqrt{k}})$. 

\newpage
\bibliographystyle{siam}
\bibliography{references} 

\newpage
\appendix
\addtocontents{toc}{\protect\setcounter{tocdepth}{1}}

\section{Proof of Key Lemmas}
To guide the derivations in the following sections, we provide a summary of the main notations in \Cref{table2}.

\begin{table}[t]
\centering
{
\NiceMatrixOptions{cell-space-limits=2pt}
{\small
\begin{NiceTabular}{|p{2.45cm}|p{13.4cm}|}%
 [ 
   code-before = 
    \rectanglecolor{lavender!40}{3-1}{3-2}
    \rectanglecolor{lavender!40}{6-1}{6-2}
    \rectanglecolor{lavender!40}{8-1}{8-2}
 ]
\toprule
\Block[c]{1-1}{\textbf{Notation}} & \Block[c]{1-1}{\textbf{Description}}  \\\Hline   
\Block{1-1}{$\mer$} & \Block[l]{1-1}{merit function $\mer(x,z) := f(z) + \zeta \|z-x\|^2$ with $\zeta := \frac{3\sL}{1-\lambda}$;}  \\
\Block{1-1}{$\tg_{m,n}$} & \Block[l]{1-1}{natural time scale $\tg_{m,n}=\sum_{i=m}^{n-1}\alpha_i$ for $n> m$;} \\[1mm]
\Block{2-1}{$\tw$, $\ti_k$, $\Ti_k$} & \Block[l]{1-1}{time window; associated time indices satisfying $\tg_{\ti_k,\ti_{k+1}} \leq \tw$; time indices collections} \\[-1mm]
 & \Block[l]{1-1}{$\Ti_k:=\{t\in\N:\ti_k<t\leq \ti_{k+1}\}$;} \\
\Block{1-1}{$\se^k$} & \Block[l]{1-1}{stochastic approximation errors $\se^k := \nabla f(\tilde \sx^k) - \sg^k$;} \\
\Block{1-1}{$\scs_k$} & \Block[l]{1-1}{aggregation of stochastic errors $\scs_k := \max_{\ell\in\Ti_k}\|\sum_{i=\gamma_k}^{\ell-1} \alpha_i\se^i\|$;} \\
\Block{1-1}{$\sd_{k}$} & \Block[l]{1-1}{iterate distance $\sd_{k} := \max\{ \max_{\ell\in\Ti_k}\|\sx^\ell-\sx^{\ti_k}\|,  \max_{\ell\in\Ti_k}\|\sz^\ell-\sz^{\ti_k}\|\}$;} \\
\bottomrule
\end{NiceTabular}
}
}
\caption{List of variables, parameters, and functions.}
  \label{table2}
  \vspace{-4mm}
\end{table}
\subsection{Proof of \texorpdfstring{\Cref{lem:err_estimate}}{Lemma 3.2}} \label{proof:martingale}
The proof of \Cref{lem:err_estimate} relies on standard martingale techniques and is based on the Burkholder-Davis-Gundy inequality, \cite{BurDavGun72,Str11}.
\begin{lemma}[Burkholder-Davis-Gundy Inequality]
		\label{Thm:BDG}
		Let $\{\sw^{k}\}_k$ be a vector-valued martingale with an associated filtration $\{\mathcal U_k\}_k$ and $\sw^{0}=0$. For all $p \in (1,\infty)$, there then is $C_p > 0$ such that
		\[ \Exp\left[{{\sup}_{k \geq 0} \|\sw^k\|^p}\right] \leq C_p \cdot \Exp\Big[\Big( {\sum}_{k=1}^\infty \|\sw^{k}-\sw^{k-1}\|^2 \Big)^{{p}/{2}} \Big]. \]
\end{lemma}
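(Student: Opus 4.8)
The statement is the classical Burkholder--Davis--Gundy (BDG) inequality, so strictly speaking it can be invoked verbatim from \cite{BurDavGun72,Str11}; nonetheless, I sketch a self-contained route for the single direction actually needed here, namely the \emph{upper} bound controlling the maximal function by the square function. Write $S^*:={\sup}_{k\ge0}\|\sw^k\|$ and $Q:=(\sum_{k\ge1}\|\sw^k-\sw^{k-1}\|^2)^{1/2}$; the goal is $\Exp[(S^*)^p]\le C_p\,\Exp[Q^p]$ for $p\in(1,\infty)$. The case $p=2$ is the anchor: orthogonality of martingale increments gives $\Exp[\|\sw^n\|^2]=\sum_{k=1}^n\Exp[\|\sw^k-\sw^{k-1}\|^2]$, and Doob's $L^2$ maximal inequality yields $\Exp[(S^*)^2]\le 4\,\Exp[Q^2]$. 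Since $x\mapsto\|x\|$ is convex, $\{\|\sw^k\|\}_k$ is a nonnegative submartingale, so Doob's $L^p$ inequality already reduces the maximal term to the terminal one; the real content is thus the \emph{square-function inequality} ${\sup}_n\Exp[\|\sw^n\|^p]\le C_p\Exp[Q^p]$.

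The plan for general $p$ is the \textbf{good-$\lambda$ method}. I would first establish a distributional inequality of the form
\[
\Prob\big(S^*>\beta\lambda,\ Q\le\delta\lambda\big)\ \le\ \varepsilon(\delta)\,\Prob\big(S^*>\lambda\big),\qquad \beta>1,
\]
where $\varepsilon(\delta)\to 0$ as $\delta\to 0$ (with $\beta$ fixed). This is obtained by introducing the stopping times $\tau=\inf\{n:\|\sw^n\|>\lambda\}$, $\sigma=\inf\{n:\|\sw^n\|>\beta\lambda\}$, and $\rho=\inf\{n:\sum_{k\le n}\|\sw^k-\sw^{k-1}\|^2>(\delta\lambda)^2\}$, and analyzing the stopped martingale $\sw^{n\wedge\sigma\wedge\rho}-\sw^{n\wedge\tau\wedge\rho}$: on the event in question its square function is at most $\delta\lambda$, so a second-moment (Chebyshev) estimate bounds the probability that it grows by the required amount $(\beta-1)\lambda$ between the crossings of $\lambda$ and $\beta\lambda$, yielding the factor $\varepsilon(\delta)\asymp\delta^2/(\beta-1)^2$.

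Given the good-$\lambda$ inequality, I would integrate against $p\lambda^{p-1}\,d\lambda$. Using $\Prob(S^*>\beta\lambda)\le\Prob(S^*>\beta\lambda,\,Q\le\delta\lambda)+\Prob(Q>\delta\lambda)$ and the layer-cake formula gives $\beta^{-p}\Exp[(S^*)^p]\le\varepsilon(\delta)\Exp[(S^*)^p]+\delta^{-p}\Exp[Q^p]$; choosing $\delta$ small enough that $\varepsilon(\delta)<\beta^{-p}$ lets the $(S^*)^p$ term be absorbed, which is exactly the claimed bound with an explicit $C_p$ (a preliminary localization/truncation ensures $\Exp[(S^*)^p]<\infty$ before absorbing). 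For $p\ge 2$ there is a shorter alternative I could use instead: since $\|\cdot\|^p$ is $C^2$ with $\|\nabla^2(\|x\|^p)\|\lesssim\|x\|^{p-2}$, a second-order Taylor expansion of $\|\sw^k\|^p-\|\sw^{k-1}\|^p$ (whose first-order term has zero conditional mean) gives $\Exp[\|\sw^n\|^p]\lesssim\Exp[(S^*)^{p-2}Q^2]$, and Hölder turns this into a self-improving inequality for $\Exp[(S^*)^p]$.

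I expect the \textbf{main obstacle} to be the good-$\lambda$ distributional estimate itself: controlling the increment of the stopped martingale between the two threshold crossings requires care with the jump at the first crossing and with the a priori integrability needed before absorption, and the range $1<p<2$ cannot be handled by the elementary Taylor argument (hence the good-$\lambda$ route is needed for full generality). Since BDG is standard and only the upper bound for a single exponent is used downstream (indeed, in \Cref{lem:err_estimate} even the crude $p=2$ version via Doob suffices), the pragmatic choice in the paper is simply to cite \cite{BurDavGun72,Str11}; the sketch above records the proof I would reconstruct if a self-contained argument were desired.
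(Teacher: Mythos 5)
The paper does not prove this lemma at all: it is stated as a classical result and cited directly from \cite{BurDavGun72,Str11}, which is exactly the pragmatic route you identify at the end of your proposal. Your supplementary sketch (the $p=2$ anchor via orthogonality of increments plus Doob, and the good-$\lambda$ distributional inequality with absorption for general $p\in(1,\infty)$) is the standard Burkholder argument and is sound in outline; it is also worth noting, as you do, that the downstream application in \Cref{lem:err_estimate} only ever invokes the constant $C_2$, for which the elementary $p=2$ argument already suffices.
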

We now start with the proof of \Cref{lem:err_estimate}. 
\begin{proof}{Proof}  Let us define $\mathcal{U}_\ell:=\mathcal{F}_{\ti_k+\ell}$ and let the sequence $\{\sy^\ell\}_\ell$ be given by $\sy^0:=0$, $\sy^\ell := {\sum}_{i=\ti_k}^{\min\{\ti_k+\ell,\ti_{k+1}\}-1}\alpha_i\beta_i \se^i$ for all $\ell \geq 1$.
Then, each of the functions $\sy_\ell$ is $\mathcal U_\ell$-measurable and for all $\ell \in \{1,\dots,\ti_{k+1}-\ti_k-1\}$, we have 
\[ {\Exp[{\sy^{\ell+1} \mid \mathcal U_\ell}] = {\sum}_{i=\ti_k}^{\ti_k+\ell} \alpha_i\beta_i \Exp[{\se^i \mid \mathcal U_\ell}] = \sy^\ell + \alpha_{\ti_k+\ell}\beta_{\ti_k+\ell} \Exp[{\se^{\ti_k+\ell} \mid \mathcal{F}_{\ti_k+\ell}}] = \sy^\ell.} \]
(similarly for $\ell \geq \ti_{k+1}-\ti_k$). Thus, $\{\sy^\ell\}_\ell$ defines a $\{\mathcal{U}_\ell\}$-martingale. By \Cref{Thm:BDG}, \Cref{as:sgd},  \eqref{step size-1}, and setting $\bar\scs_k :=\max_{j \in \Ti_k} \| {\sum}_{i=\ti_k}^{j-1} \alpha_i \beta_i \se^i\|$, it follows
%
\begin{equation}\label{eq:app-bdg}
    \begin{aligned}
        \Exp[{\bar\scs_k^2}] =\Exp\left[{{\sup}_{\ell>0} \|\sy^\ell\|^2}\right] & \leq C_2 \cdot  \Exp\left[{ {\sum}_{\ell=1}^{\infty}\|\sy^\ell-\sy^{\ell-1}\|^2}\right] \\ & = C_2 {\sum}_{i=\ti_k}^{\ti_{k+1}-1}\alpha_i^2 \beta_i^2 \Exp[{ \|\se^i\|^2}]  \leq C_2 \sigma^2 {\sum}_{i=\ti_k}^{\ti_{k+1}-1} \alpha_i^2\beta_i^2 < \infty. 
    \end{aligned}
\end{equation}
%
Next, for all $j \in \Ti_k$ and similar to \cite[Lemma 6.1]{tadic2015convergence}, we have
\[
\begin{aligned} {\sum}_{i= \ti_k}^{j-1} \alpha_i \se^i & = \beta_{j-1}^{-1} {\sum}_{i=\ti_k}^{j-1} \alpha_i\beta_i \se^i -  \beta_{j-1}^{-1} {\sum}_{i=\ti_k}^{j-2} \alpha_i \beta_i \se^i + {\sum}_{i=\ti_k}^{j-2} \alpha_i \se^i \\ & = \dots =  \beta_{j-1}^{-1} {\sum}_{i=\ti_k}^{j-1} \alpha_i\beta_i \se^i + {\sum}_{\ell=\ti_k}^{j-2}  (\beta_\ell^{-1}-\beta_{\ell+1}^{-1}) {\sum}_{i = \ti_k}^\ell \alpha_i \beta_i \se^i.  \end{aligned}
\]
Since $\{\beta_k\}_k$ is assumed to be non-decreasing, this yields
\[
\begin{aligned}
	 \scs_k =  \max_{j\in\Ti_k } \left\| {\sum}_{i=\ti_k}^{j-1} \alpha_i \se^i\right\| \leq \max_{j\in\Ti_k } \left[ \beta_{j-1}^{-1}+ {\sum}_{\ell=\ti_k}^{j-2}(\beta_\ell^{-1}-\beta_{\ell+1}^{-1}) \right] \cdot \bar\scs_k = \beta_{\ti_k}^{-1} \cdot \bar\scs_k, 
\end{aligned}
\]
for all $k \geq 1$. The monotone convergence theorem and \eqref{eq:app-bdg} imply $\Exp [{ {\sum}_{k=1}^\infty \beta_{\ti_k}^2{\scs_k^2}}] = {\sum}_{k=1}^\infty {\Exp[ { \beta_{\ti_k}^2\scs_k^2 }] } \leq  {\sum}_{k=1}^{\infty}\Exp[\bar\scs_k^2] < \infty$.
Hence, we obtain $\sum_{k=1}^\infty \beta_{\ti_k}^2{\scs_k^2} < \infty$ a.s..
\end{proof}


\subsection{Proof of \texorpdfstring{\Cref{lem:iterate-bound}}{Lemma 3.3}} \label{Proof:lem:iterate-bound}
\begin{proof}{Proof}
Let us  pick any $\delta \in [0,1)$ and $\tw \in (0,\frac{1-\lambda}{\sL\tau}]$ where $\tau:=\frac{20(1+2\mparam)}{1-\lambda}$. According to $\alpha_k\to0$ and  \Cref{lem:time-length}, there is $ K_\tw \geq 1$, such that for all $k\geq K_\tw$, it holds that
\begin{equation}\label{eq:step size requirement}
    \begin{aligned}
    \nonumber &{\sum}_{i=\ti_k}^{\ell-1}\alpha_i \leq \tg_{\ti_k,\ti_{k+1}} \leq \tw \quad  \text{for all } \ell \in \Ti_k=\{t\in\N : \ti_k < t \leq \ti_{k+1}\} \\
    &  \sL \mparam  \alpha_{\ti_k} \leq \lambda \iota \quad \text{where}\quad \iota:=\frac{1}{10} \cdot \min\Big\{\frac{1}{10},\frac{\mparam (1-\lambda)}{1+2\mparam}\Big\}. 
\end{aligned}  
\end{equation}
  %
%
To simplify the notations, we denote $m:=\ti_k$ and $n:=\ti_{k+1}$.\\[2mm]
\noindent\textbf{Step 1:} \emph{Bounding $\max_{\ell\in\Ti_k}  \|\sx^\ell-\sx^{m}\|^2$.} By the update of $\SGDM$ in \cref{algo:sgdm}, 
\[ \sx^\ell - \sx^m = \lambda (\sx^{\ell-1} - \sx^{m-1}) - \sum_{i=m}^{\ell-1}\alpha_i \sg^i = \lambda (\sx^{\ell-1} - \sx^{m}) + \lambda (\sx^{m} - \sx^{m-1}) - \sum_{i=m}^{\ell-1}\alpha_i \sg^i
\]
holds for all $\ell\in\Ti_k$. Setting $\sy^\ell := \lambda^{-\ell}(\sx^\ell - \sx^m)$ and $\sct{\eta}^\ell:=\lambda^{-\ell}[(\sx^m-\sx^{m-1}) - \lambda^{-1}\sum_{i=m}^{\ell}\alpha_i \sg^i]$, we can rewrite the above expression as $\sy^\ell = \sy^{\ell-1} + \sct{\eta}^{\ell-1}$. Unfolding this recursion, it follows $\sy^\ell= \sum_{j=m}^{\ell-1} \sct{\eta}^j$ and
\[
\begin{aligned}
    \sx^\ell - \sx^m = \lambda^\ell \sy^\ell &= {\sum}_{j=m}^{\ell-1}\lambda^{\ell-j}\Big[(\sx^m-\sx^{m-1}) - \lambda^{-1}{\sum}_{i=m}^{j}\alpha_i\sg^i \Big] \\
    &=\frac{\lambda(1-\lambda^{\ell-m})}{1-\lambda}(\sx^m-\sx^{m-1}) - {\sum}_{j=0}^{\ell-m-1}\lambda^j\,{\sum}_{i=m}^{\ell-j-1}\alpha_i \sg^i.
\end{aligned}
\]
Noting $\sg^i=\nabla f(\nx^i) - \se^i = \nabla f(\sx^m) - \se^i + [\nabla f(\nx^i) -  \nabla f(\sx^m)]$, we obtain
\begin{equation}\label{eq:lem-sgdm-important}
\begin{aligned}
   \sx^\ell - \sx^m &= \frac{\lambda(1-\lambda^{\ell-m})}{1-\lambda}(\sx^m-\sx^{m-1}) - {\sum}_{j=0}^{\ell-m-1}\lambda^j\;{\sum}_{i=m}^{\ell-j-1}\alpha_i \nabla f(\sx^m) \\
   &\hspace{3ex}+{\sum}_{j=0}^{\ell-m-1}\lambda^j\;{\sum}_{i=m}^{\ell-j-1} \Big[\alpha_i \se^i + \alpha_i \big(\nabla f(\sx^m)-\nabla f(\nx^i) \big) \Big].
\end{aligned}
\end{equation}
Taking norm in \eqref{eq:lem-sgdm-important} and using $\sum_{j=0}^{\ell-m-1}\lambda^j \leq \frac{1}{1-\lambda}$ and $\sum_{i=m}^{\ell-1}\alpha_i  \leq \tw$, this yields 
\begin{equation}
    \label{eq:def sW}
    \begin{aligned}
 (1-\lambda)\|\sx^\ell - \sx^m\|  & \leq \lambda\|\sx^m - \sx^{m-1}\| + \tw \|\nabla f(\sx^m)\|  \\ 
&\hspace{4ex} + \max_{m<j\leq \ell}\Big\|{\sum}_{i=m}^{j-1}\alpha_i \se^i\Big\| + {\sum}_{i=m}^{n-1}\alpha_i \|\nabla f(\nx^i)- \nabla f(\sx^m)\| \\
& \leq \lambda\|\sx^m - \sx^{m-1}\| + \tw \|\nabla f(\sx^m)\| + \scs_{k} + \sL{\sum}_{i=m}^{n-1}\alpha_i \|\nx^i -\sx^m\| =:(1-\lambda)\sW,
\end{aligned}
\end{equation}
where we applied $\max_{m<j\leq \ell}\|\sum_{i=m}^{j-1}\alpha_i \se^i\| \leq \scs_{k}$ and \Cref{Assumption:1} to obtain the second inequality. Due to $\|\nx^i - \sx^m\| \leq(1+\mparam)\|\sx^i - \sx^m\| + \mparam \|\sx^{i-1} - \sx^m\|$ and since $\{\alpha_k\}_k$ is non-increasing, it further follows
\[ {\sum}_{i=m}^{n-1}\alpha_i \|\nx^i -\sx^m\| \leq  \mparam \alpha_m \|\sx^m - \sx^{m-1}\| + (1+2\mparam){\sum}_{i=m}^{n-1}\alpha_i \|\sx^i -\sx^m\|. \]
Thus, combining this with \eqref{eq:step size requirement} and defining $\sV:={\sum}_{i=m}^{n-1}\alpha_i \|\sx^i-\sx^m\|$, we can infer 
\[ (1-\lambda)\sW \leq (1+\iota)\lambda\|\sx^m - \sx^{m-1}\| + \tw \|\nabla f(\sx^m)\| + \scs_{k} + \sL(1+2\mparam)\sV. \]
Next, using $(1-\lambda)(\sz^k - \sx^k) =\lambda(\sx^k - \sx^{k-1})$ (cf$.$ \eqref{eq:lem-iter-bound-z-x}) and $\|\nabla f(\sx^m)\|\leq \|\nabla f(\sz^m)\| + \sL\|\sz^m-\sx^m\|$, it follows
%
\begin{equation}\label{eq:lem-sgdm-bounded-update-1}
    \begin{aligned}
     (1-\lambda)\sW  & \leq  (1+\iota)(1-\lambda)\|\sz^m - \sx^{m}\| + \tw \|\nabla f(\sx^m)\| + \scs_{k}  + \sL(1+2\mparam)\sV \\ 
    &\leq [(1+\iota)(1-\lambda)+\sL\tw]\|\sz^m - \sx^{m}\| + \tw \|\nabla f(\sz^m)\| + \scs_{k}  + \sL(1+2\mparam)\sV \\
    &\leq \underbracket{(1-\lambda)[1+\iota+\tau^{-1}] \|\sz^m - \sx^{m}\| + {\tw}\|\nabla f(\sz^m)\| + {\scs_k}}_{=:(1-\lambda)\sv} + {\sL(1+2\mparam)\sV}, 
\end{aligned}
\end{equation}
where the last line is due to $\sL\tw \leq (1-\lambda)/\tau$.
Since $\sV={\sum}_{i=m}^{n-1}\alpha_i \|\sx^i-\sx^m\|$ and $\max_{m\leq i\leq n}\|\sx^i-\sx^m\| \leq \sW$ according to \eqref{eq:def sW}, it holds that $\sV \leq ({\sum}_{i=m}^{n-1}\alpha_i) \sW \leq (\sum_{i = m}^{n-1} \alpha_i) \cdot [\frac{(1+2\mparam)\sL}{1-\lambda}\sV + \sv]$, where the last inequality follows from \eqref{eq:lem-sgdm-bounded-update-1}. Rearranging this estimate and utilizing $\sum_{i = m}^{n-1} \alpha_i \leq \tw \leq \frac{1-\lambda}{\sL\tau}$, this yields
\begin{equation}
    \label{eq:lem:bound V W}
    \sV \leq \Big[1-\frac{1+2\mparam}{\tau}\Big]^{-1} \frac{1-\lambda}{\sL\tau} \sv \leq \frac{1-\lambda}{\sL(\tau-1-2\mparam)} \sv \quad \text{and} \quad \sW  \leq \frac{\tau}{\tau-1-2\nu}\sv.
    \end{equation}
Invoking $\tau = \frac{20(1+2\nu)}{1-\lambda}$, it further follows $\frac{\tau}{\tau-1-2\nu} \leq \frac{20}{19}$ and hence, we obtain
\begin{equation}
    \label{eq:lem:bound-state a}
\max_{\ell\in\Ti_k}  \|\sx^\ell-\sx^m\| \leq \sW \leq  \frac{20}{19} \Big[\Big(1+\iota + \frac{1}{\tau}\Big)\|\sz^m - \sx^{m}\| + \frac{\tw\|\nabla f(\sz^m)\|+\scs_k}{1-\lambda}\Big].
\end{equation}
Finally, using $(a+b+c)^2 \leq (1+2/\varepsilon)a^2 + (2+\varepsilon)(b^2+c^2)$ with $\varepsilon=10$, $\frac{400\cdot 12}{361} \leq 15$, and $(1+\frac15)\frac{400}{361}(1+\iota+\tau^{-1})^2 \leq \frac43 (\frac{53}{50})^2 \leq \frac32$, (due to $\tau \geq 20$ and $\iota \leq \frac{1}{100}$), we have 
\begin{equation}
    \label{eq:lem:bound-state a 2}
\max_{\ell\in\Ti_k}  \|\sx^\ell-\sx^m\|^2 \leq \sW^2 \leq \frac{3}{2} \|\sz^m - \sx^{m}\|^2 + \frac{15[\tw^2 \|\nabla f(\sz^m)\|^2 + \scs_{k}^2]}{(1-\lambda)^2}.
\end{equation}

\noindent\textbf{Step 2:} \emph{Bounding $\max_{\ell\in\Ti_k}\|\sz^\ell - \sz^{m}\|$.}
Based on \eqref{eq:lem-iter-bound-z-x}, it holds for all $\ell\in\Ti_k$ that
\begin{equation}\label{eq:recursion-z}
    \begin{aligned}
        (1-\lambda)(\sz^\ell - \sz^m) & = - {\sum}_{i=m}^{\ell-1} \alpha_i \sg^i \\ &=  - \tg_{m,\ell} \nabla f(\sx^m) + {\sum}_{i=m}^{\ell-1} \alpha_i \se^i - {\sum}_{i=m}^{\ell-1} \alpha_i[\nabla f(\nx^i) -  \nabla f(\sx^m)], 
        \end{aligned}
\end{equation}    
    which yields
    \begin{equation}
        \label{eq:lem:bound-state b}
             \max_{\ell\in\Ti_k}\|\sz^\ell - \sz^m\| \leq \frac{1}{1-\lambda} \Big[\tw\|\nabla f(\sx^m)\| + \scs_k + \sL {\sum}_{i=m}^{n-1}\alpha_i \|\nx^i-\sx^m\| \Big] \leq \sW,
    \end{equation}
    where the last step is due to the definition of $\sW$ in  \eqref{eq:def sW}. Taking square on both sides of \eqref{eq:lem:bound-state b} and using \eqref{eq:lem:bound-state a 2}, this establishes the first statement \eqref{eq:tedious-01-later} of \Cref{lem:iterate-bound}.\\[1mm]   
\textbf{Step 3:} \emph{Bounding $\|\sx^{n} - \sx^{n-1}\|^2$.}
W.l.o.g., we assume $\lambda \neq 0$ and obtain 
\[
     \frac{\sx^n-\sx^{n-1}}{\lambda^n} = \frac{\sx^{n-1}-\sx^{n-2}}{\lambda^{n-1}} - \frac{\alpha_{n-1}\sg^{n-1}}{\lambda^{n}} = \cdots = \frac{\sx^m-\sx^{m-1}}{\lambda^m} - {\sum}_{i=m}^{n-1}\; \frac{\alpha_{i}\sg^{i}}{\lambda^{i+1}}.
\]
Substituting $\sg^i= \nabla f(\sx^m) - \se^i + [\nabla f(\nx^i) -  \nabla f(\sx^m)]$, using the triangle inequality, and setting $\tilde \tau := (1+2\mparam)^{-1}\tau$ and $\bar\sr:=\sum_{i=m}^{n-1}\alpha_i\lambda^{-i}\se^i$, we obtain
\begin{equation}\label{eq:lem-sgdm-bounded-update-2}
    \begin{aligned}
         \|\sx^n-\sx^{n-1}\| & \leq  \lambda^{n-m}\|\sx^m-\sx^{m-1}\| + \lambda^{n-1}\Big\|{\sum}_{i=m}^{n-1} \alpha_{i}\lambda^{-i} \sg^{i} \Big\|\\ 
    &\leq  \lambda\|\sx^m-\sx^{m-1}\| +  {\sum}_{i=m}^{n-1} \alpha_{i} [\|\nabla f(\sx^m)\| + \sL\|\nx^i-\sx^m\|] + \lambda^{n-1} \|\bar\sr\| \\
     &\leq (1-\lambda) \sW - \scs_k +\lambda^{n-1}\|\bar\sr\|, 
    \end{aligned}
\end{equation}
where the second line utilizes $\lambda^{n-1}/\lambda^i \leq 1$ and the $\sL$-smoothness of $f$, and the last line follows from $\sum_{i=m}^{n-1} \alpha_i \leq \tw$ and \eqref{eq:def sW}. Setting $\sr^\ell:=\sum_{i=m}^{\ell-1}\alpha_i \se^i$, we then obtain
%
\[\begin{aligned}
 \bar\sr  & = \lambda^{1-n}(\sr^n - \sr^{n-1}) + {\sum}_{i=m}^{n-2} \alpha_{i}\lambda^{-i}\se^i= \cdots \\
    & = {\sum}_{i=m+2}^n\lambda^{1-i}(\sr^i - \sr^{i-1}) + \lambda^{-m}\sr^{m}= \lambda^{1-n}\sr^n -  (1-\lambda){\sum}_{i=m+1}^{n-1} \lambda^{-i}\sr^i.
\end{aligned}\]
%
Noting $\|\sr^\ell\| \leq \scs_k$ for all $\ell \in \Ti_k$ and using $\sum_{i=0}^{n-1}\lambda^{-i} = \frac{\lambda^{1-n}-\lambda}{1-\lambda}$, this yields
\[ \|\bar\sr\| = \Big\|{\sum}_{i=m}^{n-1} \alpha_{i}\lambda^{-i} \se^{i} \Big\| \leq \lambda^{1-n}\scs_k + (1-\lambda) \scs_k {\sum}_{i=m+1}^{n-1} \lambda^{-i} \leq 2\lambda^{1-n}\scs_k.
\]
Recalling $\tilde \tau = \frac{\tau}{1+2\mparam}$, $\tau = \frac{20(1+2\nu)}{1-\lambda}$ and using \eqref{eq:lem:bound V W} with $\frac{\tau}{\tau-1-2\nu} = \frac{\tilde \tau}{\tilde \tau - 1}$ and the relation $(1-\lambda)\sv=  (1+\iota+\tau^{-1})\lambda\|\sx^m - \sx^{m-1}\| + \tw\|\nabla f(\sz^m)\| + \scs_k$ (cf$.$ \eqref{eq:lem-sgdm-bounded-update-1}, \eqref{eq:lem-iter-bound-z-x}), we have
\[\begin{aligned}
    \|\sx^n-\sx^{n-1}\| &\leq (1-\lambda) \sW + \scs_k\\ 
    &\leq  \frac{\tilde \tau}{\tilde\tau-1} (1+\iota+\tau^{-1})\lambda \cdot \|\sx^m - \sx^{m-1}\|  + \frac{\tilde\tau}{\tilde\tau-1} [\tw \|\nabla f(\sz^m)\|+ 2\scs_k] \\
    &\leq \frac{\tilde\tau+1}{\tilde\tau-1}\lambda \cdot \|\sx^m - \sx^{m-1}\| + \frac{\tilde\tau}{\tilde\tau-1} [\tw \|\nabla f(\sz^m)\|+ 2\scs_k],
\end{aligned}\]
where the last line is due to $\iota \leq \frac{\mparam (1-\lambda)}{10(1+2\mparam)} = 2\mparam \tau^{-1}$ and $\iota+\tau^{-1} \leq (2\mparam + 1)\tau^{-1} = {\tilde \tau}^{-1}$. Thus, applying $(a+b+c)^2 \leq (1+2/\varepsilon)a^2 + (2+\varepsilon)(b^2+c^2)$ with  $\varepsilon=\frac{2(2\lambda+1)}{1-\lambda}$ to the last estimate, it holds that
\begin{equation}\label{eq:lem-sgdm-bounded-update-3}
\|\sx^n-\sx^{n-1}\|^2 \leq c_1 \|\sx^m - \sx^{m-1}\|^2 + c_2 [\tw^2\|\nabla f(\sz^m)\|^2 + 4\scs_k^2].
\end{equation}
where $c_1 = (\frac{\tilde\tau+1}{\tilde\tau-1})^2\frac{2+\lambda}{2\lambda+1}\lambda^2$ and $c_2 = \frac{2(2+\lambda)}{1-\lambda}(\frac{\tilde\tau}{\tilde\tau-1})^2$. Moreover, we have $\frac{\tilde\tau+1}{\tilde\tau-1} = 1+ \frac{2}{\tilde\tau-1}$ with $\frac{2}{\tilde\tau-1}= \frac{2(1-\lambda)}{19+\lambda}$, $\frac{2}{\tilde\tau-1} \leq \frac{1}{9}$ and $\frac{2}{\tilde\tau-1} \leq \frac{1-\lambda}{8} = \frac{9}{40}\frac{5(1-\lambda)}{3\cdot 3} \leq \frac{9(1-\lambda)(4\lambda+5)}{38(\lambda+2)(2\lambda+1)}$, based on the definition of $\tilde\tau$ and $\tau$. Next, utilizing $\lambda \leq \frac{2\lambda+1}{3}$, we obtain
\[
    c_1 \leq \Big(1+\frac{38}{9(\tilde\tau-1)}\Big)\frac{(2+\lambda)\lambda^2}{(2\lambda+1)} \leq \Big[1+\frac{(1-\lambda)(4\lambda+5)}{2(2+\lambda)(2\lambda+1)}\Big]\cdot\frac{(\lambda+2)(2\lambda+1)}{9} = \frac{1+\lambda}{2},
\]
where we first applied $(1+ \frac{2}{\tilde\tau-1})^2 = 1 + \frac{2}{\tilde\tau-1}(2+ \frac{2}{\tilde\tau-1}) \leq 1 + \frac{38}{9(\tilde\tau-1)}$. In addition, due to $(\frac{\tilde\tau}{\tilde\tau-1})^2 = (\frac{20}{19+\lambda})^2 \leq (\frac{20}{19})^2 \leq \frac43$, it holds that $c_2 \leq \frac{8}{1-\lambda}$ and, by \eqref{eq:lem-sgdm-bounded-update-3}, we can infer
\[
     \|\sx^n-\sx^{n-1}\|^2 \leq \frac{1+\lambda}{2}\|\sx^m - \sx^{m-1}\|^2 + \frac{8}{1-\lambda}[\tw^2\|\nabla f(\sz^m)\|^2 + 4\scs_k^2].
\]
Finally, using $\sz^k - \sx^k = \frac{\lambda}{1-\lambda}(\sx^k - \sx^{k-1})$, this completes the proof of \eqref{eq:tedious-02-later}.
\end{proof}

\subsection{Proof of \texorpdfstring{\Cref{lem:approx-desent}}{Lemma 3.4}}\label{proof:lem:approx-desent}
\begin{proof}{Proof}
Let us set $\delta = 0.99$ and pick $\tw \in (0,\frac{(1-\lambda)^3}{50\sL(1+2\mparam)^2}] \subset (0,\frac{(1-\lambda)^2}{20\sL(1+2\mparam)}]$. Then, \Cref{lem:time-length,lem:iterate-bound} are applicable and there is $ K_\delta \geq 1$, such that for all $k\geq K_\delta$,
\begin{equation}
    \label{eq:lem:descent-0}
    \begin{aligned}
        &\delta \tw \leq \tg_{\ti_k,\ti_{k+1}} \leq \tw, \quad
        \sL \mparam^2 \alpha_{\gamma_k} \leq 
        \lambda^2/80, \quad\text{and} \quad \eqref{eq:tedious-01-later}\;\&\; \eqref{eq:tedious-02-later}\;\,\text{hold}. 
    \end{aligned}
\end{equation}
We denote $m:=\ti_k$ and $n:=\ti_{k+1}$. First, using the $\sL$-continuity of $\nabla f$, it holds that
\begin{equation}
\label{eq:sgdm-descent-prop-1}
 f(\sz^n)  \leq f(\sz^m) + \langle \nabla f(\sz^m),\sz^n-\sz^m \rangle + \frac{\sL}{2}\sd_k^2. 
\end{equation}
Next, applying \eqref{eq:recursion-z}, we obtain $(1-\lambda)\langle \nabla f(\sz^m),\sz^n-\sz^m \rangle = \tg_{m,n}T_1 + T_2 + T_3$, where
\[ T_1 := - \langle\nabla f(\sz^m),\nabla f(\sx^m)\rangle , \quad T_2 := {\sum}_{i=m}^{n-1}\alpha_i \langle \nabla f(\sz^m),\nabla f(\sx^m)-\nabla f(\nx^i) \rangle, \]
and $T_3 := \langle \nabla f(\sz^m),{\textstyle\sum}_{i=m}^{n-1} \alpha_i \se^i \rangle$. By Young's inequality and for any $\varepsilon_1,\varepsilon_2>0$, we further have $T_1 = -\|\nabla f(\sz^m)\|^2 + \langle \nabla f(\sz^m),\nabla f(\sz^m)-\nabla f(\sx^m) \rangle \leq -(1-\frac{\varepsilon_1}{2})\|\nabla f(\sz^m)\|^2 + \frac{\sL^2}{2\varepsilon_1}\|\sz^m-\sx^{m}\|^2$ and  
\[
\begin{aligned}
    T_2 &  \leq {\sum}_{i=m}^{n-1}\alpha_i \|\nabla f(\sz^m)\| \cdot \sL[(1+\mparam)\|\sx^m- \sx^i\| + \mparam\|\sx^m- \sx^{i-1}\| ]\\
    & \leq {\sum}_{i=m}^{n-1}\alpha_i [\varepsilon_2\|\nabla f(\sz^m)\|^2 + \tfrac{\sL^2(1+\mparam)^2}{2\varepsilon_2} \|\sx^m -\sx^i\|^2 + \tfrac{\sL^2\mparam^2}{2\varepsilon_2} \|\sx^m -\sx^{i-1}\|^2] \\ 
    &  \leq \varepsilon_2\tg_{m,n}\|\nabla f(\sz^m)\|^2 + \tfrac{(1+2\mparam)^2\sL^2}{2\varepsilon_2} \tg_{m,n}\sd_k^2 + \tfrac{\sL^2\mparam^2}{2\varepsilon_2} \alpha_m\|\sx^m -\sx^{m-1}\|^2.
\end{aligned}
\]
By the definition of $\scs_k$, it holds that
$T_3 \leq \frac{\varepsilon_3\tg_{m,n}}{2}\|\nabla f(\sz^m)\|^2 + \frac{\scs_k^2}{2\varepsilon_3\tg_{m,n}}$ for all $\varepsilon_3>0$. 
Thus, combining the previous estimates, \eqref{eq:lem-iter-bound-z-x}, and $\sL \mparam^2 \alpha_{m} \leq {\lambda^2}/{80}$, we obtain
\[\begin{aligned}
    & \hspace{-2ex} \langle \nabla f(\sz^m),\sz^n-\sz^m \rangle + (1- \tfrac{\varepsilon_1+2\varepsilon_2+\varepsilon_3}{2})\tfrac{\tg_{m,n}}{1-\lambda}\|\nabla f(\sz^m)\|^2 \\ &\leq  \big[\tfrac{\sL^2\tg_{m,n}}{2\varepsilon_1(1-\lambda)} +  \tfrac{\sL}{160\varepsilon_2} \big]\|\sz^m-\sx^{m}\|^2 + \tfrac{(1+2\mparam)^2\sL^2\tg_{m,n}}{2\varepsilon_2(1-\lambda)}\cdot\sd_k^2 + \tfrac{1}{2\varepsilon_3(1-\lambda)\tg_{m,n}}\cdot \scs_k^2.
\end{aligned}\]
Setting $\varepsilon_1 = \frac{1}{5}$, $\varepsilon_2 = \frac{1}{8}$, $\varepsilon_3 = \frac{1}{9}$, recalling $\delta=0.99$, and noticing $1-\frac{\varepsilon_1+2\varepsilon_2+\varepsilon_3}{2} = \frac{259}{360} \geq \frac{7}{10\delta}$, it follows
\[\begin{aligned}
    & \hspace{-2ex} \langle \nabla f(\sz^m),\sz^n-\sz^m \rangle + \tfrac{7\tw}{10(1-\lambda)}\|\nabla f(\sz^m)\|^2 \\ &\leq  \big[\tfrac{5\sL^2 \tw}{2(1-\lambda)} +  \tfrac{\sL}{20}\big] \|\sz^m-\sx^{m}\|^2 + \tfrac{4(1+2\mparam)^2\sL^2\tw}{1-\lambda}\cdot\sd_k^2 + \tfrac{9}{2(1-\lambda)\delta \tw}\cdot \scs_k^2,
\end{aligned}\]
where we applied $\delta \tw \leq \tg_{m,n} \leq \tw$ (cf$.$ \eqref{eq:lem:descent-0}). Plugging this estimate into \eqref{eq:sgdm-descent-prop-1} and using $\tw \leq \frac{1-\lambda}{50\sL(1+2\mparam)^2} \leq \frac{1-\lambda}{50\sL}$ and $\frac{4\sL}{50} + \frac{\sL}{2} \leq \frac{7\sL}{12}$, it holds that
\begin{equation}
    \label{eq:lem:descent-1}
    f(\sz^n) + \tfrac{7\tw}{10(1-\lambda)}  \|\nabla f(\sz^m)\|^2 \leq f(\sz^m) + \tfrac{7\sL}{12}\sd_k^2 + \tfrac{\sL}{10}\|\sz^m-\sx^{m}\|^2 +  \tfrac{5}{(1-\lambda)\tw}\cdot \scs_k^2.
\end{equation}
Adding $\frac{3\sL}{1-\lambda}\|\sz^n-\sx^n\|^2 + \frac{\sL}{12}\sd_k^2$ on both sides of \eqref{eq:lem:descent-1}, we can infer
%
\begin{equation}
\label{eq:lem:descent-2}
    \begin{aligned} 
    & \hspace{-2ex} f(\sz^n) + \tfrac{3\sL}{1-\lambda}\|\sz^n-\sx^n\|^2  + \tfrac{\sL}{12}\sd_k^2 + \tfrac{7\tw}{10(1-\lambda)}\|\nabla f(\sz^m)\|^2\\
     &\leq f(\sz^m) + \tfrac{2\sL}{3}\sd_k^2 + \tfrac{\sL}{10} \|\sz^m-\sx^{m}\|^2 + \tfrac{3\sL}{1-\lambda}\|\sz^n-\sx^n\|^2 + \tfrac{5\scs_k^2}{(1-\lambda)\tw} \\
     &\leq f(\sz^m) + \sL\big[\tfrac{11}{10}+\tfrac{3(1+\lambda)}{2(1-\lambda)}\big]\|\sz^m-\sx^{m}\|^2 + \tfrac{1}{1-\lambda}\big[ \tfrac{10\sL\tw}{1-\lambda} + \tfrac{24\sL\tw}{(1-\lambda)^3}\big]\tw\|\nabla f(\sz^m)\|^2\\
      &\hspace{1.4cm} + \tfrac{1}{1-\lambda}\big[\tfrac{5}{\tw} + \tfrac{10\sL}{1-\lambda} + \tfrac{96\sL}{(1-\lambda)^3}  \big]\scs_k^2\\
    &\leq  f(\sz^m) + \sL\big(\tfrac{3}{1-\lambda} - \tfrac{2}{5}\big)\|\sz^m-\sx^m\|^2
    +\tfrac{17\tw}{25(1-\lambda)}\|\nabla f(\sz^m)\|^2 + \tfrac{8}{(1-\lambda)\tw} \scs_k^2, 
\end{aligned}
\end{equation}
where the second inequality uses \eqref{eq:tedious-01-later}--\eqref{eq:tedious-02-later} for $\sd_k^2$ and $\|\sz^n-\sx^n\|^2$ and the last line follows from $\frac{11}{10} + \frac{3(1+\lambda)}{2(1-\lambda)} = \frac{3}{1-\lambda} - \frac{2}{5}$, $\frac{10\sL\tw}{1-\lambda} + \frac{24\sL\tw}{(1-\lambda)^3} \leq \frac{34\sL\tw}{(1-\lambda)^3} \leq \frac{17}{25}$ and $\frac{10\sL}{1-\lambda} + \frac{96\sL}{(1-\lambda)^3} \leq \frac{106\sL\tw}{(1-\lambda)^3\tw} \leq  \frac{3}{\tw}$.
Rearranging the inequality \eqref{eq:lem:descent-2}, we finally obtain
\[\begin{aligned}
    &\hspace{-2ex} f(\sz^n) + \tfrac{3\sL}{1-\lambda} \|\sz^n-\sx^n\|^2 + \tfrac{\sL}{12}\sd_k^2
    - \tfrac{8}{(1-\lambda)\tw}\scs_k^2\\
    &\leq f(\sz^m) + \sL\big(\tfrac{3}{1-\lambda} - \tfrac{2}{5}\big)\|\sz^m-\sx^m\|^2 - \tfrac{\tw}{50(1-\lambda)}\|\nabla f(\sz^m)\|^2 \\
    &\leq f(\sz^m) + \big[\tfrac{3\sL}{1-\lambda} - \big(\tfrac{2\sL}{5} - \tfrac{6\tw\zeta^2}{50(1-\lambda)} \big) \big]\|\sz^m-\sx^m\|^2 - \tfrac{\tw}{100(1-\lambda)} \|\nabla \mer(\sx^m,\sz^m)\|^2\\
    &\leq f(\sz^m) + \tfrac{3\sL}{1-\lambda}\|\sz^m-\sx^m\|^2 - \tfrac{\tw}{100(1-\lambda)} \|\nabla \mer(\sx^m,\sz^m)\|^2 ,
\end{aligned}\]
where the second inequality follows from the gradient bound \cref{grad-ineq} and the last line is due to $\frac{6\tw\zeta^2}{50(1-\lambda)} = \frac{6\tw}{50(1-\lambda)} (\frac{3\sL}{1-\lambda})^2 \leq \frac{2\sL}{5}$ as $\tw \leq \frac{(1-\lambda)^3}{50\sL}$.
\end{proof}

\subsection{Proof of \texorpdfstring{\Cref{lem:kl-ineq}}{Lemma 4.5}}\label{proof:lem:kl-ineq}

\begin{proof}{Proof}
 Let us fix an arbitrary sample $\omega \in \cS$ and let us set $x^t \equiv \sx^t(\omega)$, $z^t \equiv \sz^t(\omega)$, $s_t \equiv \scs_t(\omega)$, $d_t\equiv \sd_t(\omega)$, etc. By \Cref{lem:approx-desent} and as in \eqref{eq:approximate descent use}, we have
 \begin{equation}\label{eq:lem:KL-1} \mer(x^{\ti_{t+1}},z^{\ti_{t+1}}) + u_{t+1} + \frac{\sL d_t^2}{12} + \frac{\tw\|\nabla \mer(x^{\ti_t}, z^{\ti_t})\|^2}{100(1-\lambda)}  \leq \mer(x^{\ti_{t}},z^{\ti_{t}}) + u_{t},
\end{equation}
for all $t \geq K_\tw$, where $u_t=\frac{8}{(1-\lambda)\tw}{\sum}_{i=t}^\infty s_i^2$. Due to $x^{\ti_k},z^{\ti_k} \in V(x^*)$ and $|\mer(x^{\ti_k},z^{\ti_k}) - f(x^*)| < 1$, \Cref{lem:mer-kl} is applicable and for all $\vartheta\in[\theta,1)$, it holds that 
	\begin{equation}\label{eq:lem:kl-key-3}
		\|\nabla \mer(x^{\ti_k},z^{\ti_k})\| \geq {\sC} |\mer(x^{\ti_k},z^{\ti_k}) - f(x^*)|^\theta \geq \sC |\mer(x^{\ti_k},z^{\ti_k}) - f(x^*)|^\vartheta .
	\end{equation}
	We define $\varrho(x):=\frac{1}{\sC(1-\vartheta)} \cdot x^{1-\vartheta}$ (hence $[\varrho^\prime(x)]^{-1}=\sC x^\vartheta$) and  
 \[\Psi_k := \varrho(\mer(x^{\ti_{k}},z^{\ti_{k}}) - f(x^*) + u_k).\]
 Note that $\Psi_k$ and $\Psi_{k+1}$ are well-defined as $\mer(x^{\ti_{j}},z^{\ti_{j}}) + u_j \geq f(x^*)$, for $j \in \{k,k+1\}$. 
 Based on \eqref{eq:lem:kl-key-3} and $\vartheta\in[\frac12,1)$, we have 
\begin{equation}\label{eq:lem-kl-inv}
     \begin{aligned}
          \nonumber &\hspace{-8ex}[\varrho^\prime(\mer(x^{\ti_{k}},z^{\ti_{k}}) - f(x^*) + u_k)]^{-1} \leq \sC[|\mer(x^{\ti_k},z^{\ti_k}) - f(x^*)| + u_k]^\vartheta\\
          &\leq \sC|\mer(x^{\ti_k},z^{\ti_k}) - f(x^*)|^\vartheta + \sC u_k^\vartheta\leq \|\nabla \mer(x^{\ti_k},z^{\ti_k})\| +\sC u_k^\vartheta,
     \end{aligned}
\end{equation}
where the last inequality follows from the subadditivity of $x\mapsto x^\vartheta$ and \eqref{eq:lem:kl-key-3}. Using the concavity of $\varrho$ (and, w.l.o.g., assuming $\|\nabla \mer(x^{\ti_k},z^{\ti_k})\| +\sC u_k^\vartheta \neq 0$), we obtain
\[\begin{aligned}
    &\hspace{-1ex} \Psi_{k} - \Psi_{k+1} \\ & \geq \varrho^\prime(\mer(x^{\ti_{k}},z^{\ti_{k}}) - f(x^*) + u_k) \left [\mer(x^{\ti_{k}},z^{\ti_{k}}) + u_k - \mer(x^{\ti_{k+1}},z^{\ti_{k+1}}) - u_{k+1} \right] \\ 
     & \geq  \varrho^\prime(\mer(x^{\ti_{k}},z^{\ti_{k}}) - f(x^*) + u_k) \left [\tfrac{\sL}{12}d_k^2 + \tfrac{\tw}{100(1-\lambda)} \|\nabla \mer(x^{\ti_k}, z^{\ti_k})\|^2 \right]  \\
     &\geq \frac{\tfrac{\sL}{12}d_k^2 + \tfrac{\tw}{100(1-\lambda)} \|\nabla \mer(x^{\ti_k}, z^{\ti_k})\|^2}{\|\nabla \mer(x^{\ti_k},z^{\ti_k})\| +\sC u_k^\vartheta} = \frac{\tfrac{(1-\lambda)^3}{600(1+2\mparam)^2}d_k^2 + \tfrac{\tw^2}{100(1-\lambda)}\|\nabla \mer(x^{\ti_k},z^{\ti_k})\|^2}{\tw\|\nabla \mer(x^{\ti_k},z^{\ti_k})\| +\sC \tw u_k^\vartheta}\\[1mm]
    &\geq \frac{(1-\lambda)^3}{200} \cdot \frac{2[d_k/(3(1+2\mparam))]^2 + 2[\tw\|\nabla \mer(x^{\ti_k},z^{\ti_k})\|]^2}{\tw\|\nabla \mer(x^{\ti_k},z^{\ti_k})\| +\sC \tw u_k^\vartheta},
\end{aligned}\]
where the third line is due to \eqref{eq:lem:KL-1} and \eqref{eq:lem-kl-inv} and we use $\sL = \frac{(1-\lambda)^3}{50(1+2\mparam)^2\tw}$. Rearranging the above inequality and using $(a+b)^2 \leq 2a^2 + 2b^2$ gives
\[\begin{aligned}
\Big[\tfrac{d_k}{3(1+2\mparam)}  + \tw\|\nabla \mer(x^{\ti_k},z^{\ti_k})\| \Big]^2 &\leq 2\Big[\tfrac{d_k}{3(1+2\mparam)}\Big]^2 + 2[\tw\|\nabla \mer(x^{\ti_k},z^{\ti_k})\|]^2\\&\leq \tfrac{200}{(1-\lambda)^3}[\Psi_k - \Psi_{k+1}] \cdot [\tw\|\mer(x^{\ti_k},z^{\ti_k})\|  + \sC\tw u_k^\vartheta].
\end{aligned}\]
Taking the square root on both sides of this inequality, it finally follows
\[\begin{aligned}
	\tfrac{1}{3(1+2\mparam)} d_k + \tw\|\mer(x^{\ti_k},z^{\ti_k})\|  & \leq \sqrt{\tfrac{100}{(1-\lambda)^3}[\Psi_k - \Psi_{k+1}] \cdot 2[\tw\|\mer(x^{\ti_k},z^{\ti_k})\|  + \sC\tw u_k^\vartheta] } \\
	& \leq \tfrac{50}{(1-\lambda)^3}[\Psi_k - \Psi_{k+1}] +\tw\|\mer(x^{\ti_k},z^{\ti_k})\|  + \sC\tw u_k^\vartheta, 
\end{aligned}\]
where we applied  $\sqrt{ab} \leq \frac{a}{2}+\frac{b}{2}$ for all $a,b\geq0$. This completes the proof.
\end{proof}

\section{Rates Analysis under General Step Sizes Rates} \label{proof:thm:convergence-rate}
In this section, we provide the proof of \Cref{thm:convergence-rate}.
Setting $\beta_k = g(\tg_k)$ in \eqref{step size-1}, \Cref{lem:err_estimate} implies that $\Prob(\cS)=1$ where $\cS$ is defined in \eqref{step size-1}. Moreover, due to ${\sum}_{k=1}^{\infty}\alpha_k^2  g(\tg_k)^{2} <\infty$ (with $g(x) = x^r$ and $g(x)=\exp(rx)/x^p$), all step size requirements in \Cref{Prop:convergence} and \Cref{thm:finite sum} are satisfied. Hence, we have $\|\sz^k - \sx^k\|\to 0$ a.s$.$ and both  $\{\sx^k\}_k$ and  $\{\sz^k\}_k$ converge to some $\crit(f)$-valued mapping $\sx^*:\Omega \to \crit(f)$ a.s$.$ on $\cX$. Let us define the event $\cR\in\cF$, with $\Prob(\cR)= \Prob(\cX)$, as
\begin{equation}
    \label{def:rate-Event}
    \begin{aligned}
        \cR:=& \;
        \cS\cap\cX\cap \{\omega \in \Omega: \sx^k(\omega)\to \sx^*(\omega)\;\;\text{and}\;\; \sz^k(\omega)\to \sx^*(\omega) \}. 
    \end{aligned}
\end{equation}
Let us fix $\omega\in\cR$ and consider the realizations $x^k\equiv\sx^k(\omega)$, $z^k\equiv\sz^k(\omega)$, $d_k\equiv \sd_k(\omega)$, $s_k\equiv \scs_k(\omega)$, etc.. 
Since $\{x^k\}_k$ and $\{z^k\}_k$ converge to $x^*\in\crit(f)$ and $f$ is continuous, the sequences $\{f(x^k)\}_k$ and $\{f(z^k)\}_k$ converge to $f^*:=f(x^*)$. Additionally, there is $\bar k\geq 1$ such that $x^k,z^k\in V(x^*)$ for all $k\geq \bar  k$. Restating \eqref{eq:lem:KL-1}, it holds that
\begin{equation}\label{eq:thm-rate-4}
  \mer(x^{\ti_{k+1}},z^{\ti_{k+1}}) + u_{k+1}  + \tfrac{\tw}{100(1-\lambda)} \|\nabla \mer(x^{\ti_k}, z^{\ti_k})\|^2 \leq \mer(x^{\ti_{k}},z^{\ti_{k}}) + u_{k},
\end{equation}
for all $k\geq \bar k$, where $u_k=\frac{8}{(1-\lambda)\tw}{\sum}_{i=k}^\infty s_i^2$. Rearranging and applying \Cref{lem:mer-kl} yields
 \[\begin{aligned}
 &\hspace{-6ex} 
 [\mer(x^{\ti_{k}},z^{\ti_{k}}) - f^* + u_{k}] - [\mer(x^{\ti_{k+1}},z^{\ti_{k+1}}) - f^* + u_{k+1}]
 \\ &\geq 
 2\sC_\lambda|\mer(x^{\ti_{k}},z^{\ti_{k}}) - f^*|^{2\theta}, \quad \text{where}\quad \sC_\lambda:= \tfrac{\sC^2 \tw}{200(1-\lambda)}. 
 \end{aligned}\]
Adding $2\sC_\lambda u_k^{2\theta}$ on both sides and using $2(|a|^{2\theta} + |b|^{2\theta}) \geq |a+b|^{2\theta}$, $\theta\in[\frac12,1)$, we get
\begin{equation}\label{eq:thm-rate-y-1}
	y_k - y_{k+1} + 2\sC_\lambda u_k^{2\theta} \geq 
 \sC_\lambda y_k^{2\theta} \quad \text{where}\quad y_k:=\mer(x^{\ti_{k}},z^{\ti_{k}}) - f^* +u_k \geq 0.
\end{equation}
(We have $\{y_k\}_k\subset\R_+$, since $\{y_k\}_k$ decreases monotonically to $0$).\\[1mm] 
\noindent\textbf{Part (a).} \emph{Rates under $g(x)=x^r$, $r>\frac12$.} 
Let us substitute $\beta_k = \tg_{k}^r$. According to \Cref{lem:err_estimate} and using the monotonicity of $\{\beta_k\}_k$, we have
\begin{equation}\label{eq:thm-rate-v-1}
\tg_{\ti_k}^{2r} u_k = \beta_{\ti_k}^2 u_k \leq  \frac{8}{(1-\lambda)\tw}{\sum}_{i=k}^\infty \beta_{\ti_i}^2 s_i^2 \to 0 \quad  \Longrightarrow\quad u_k = o(\tg_{\ti_k}^{-2r}).
\end{equation}
Thus, increasing $\bar k$ if necessary, it follows $2\sC_\lambda u_k^{2\theta} \leq \tg_{\ti_k}^{-4r\theta}$ and \eqref{eq:thm-rate-y-1} reduces to 
\begin{equation}\label{eq:thm-rate-y-2}
    y_{k+1} \leq y_k -  \sC_\lambda y_k^{2\theta} +  \tg_{\ti_k}^{-4r\theta} \quad \forall~k \geq \bar k.
\end{equation}
\noindent\textbf{Step 1:} 
\emph{Rates for $\{u_k\}_k$ and main recursion for $\{y_k\}_k$.} By the definition of $\tg_{k}$ and applying \Cref{lem:time-length}, it holds that  
\begin{equation}\label{eq:thm-rate-v}
   \tg_{\ti_k}^{2r} = \Big({\sum}_{i=1}^{\ti_k}\alpha_i\Big)^{2r} \geq \Big[{\sum}_{i=1}^{\ti_t-1}\alpha_i + \delta\tw (k-t) \Big]^{2r} \geq \sD^{-\frac12} k^{2r},\quad \forall\; k \geq t, 
\end{equation} 
for some $\sD \geq 1$ and for some sufficiently large $t \geq \max\{\bar k,K_\delta\}$, where $K_\delta$ is specified in \Cref{lem:time-length}. W.l.o.g., we assume $ k\geq t$ in the subsequent analysis. Combining \eqref{eq:thm-rate-y-2} and \eqref{eq:thm-rate-v} yields
\begin{equation}\label{eq:thm-rate-y-3}
     y_{k+1} \leq y_k -  \sC_\lambda y_k^{2\theta} +  \sD^{\theta} k^{-4\theta r} \leq  y_k -  \sC_\lambda y_k^{2\theta} +  \sD k^{-4\theta r}.
\end{equation}
In addition, by \eqref{eq:thm-rate-v-1} and \eqref{eq:thm-rate-v} the rate for $\{u_k\}_k$ is readily given by:
\begin{equation}\label{eq:thm-rate-v-2}
   u_k = \cO(k^{-2r}).
\end{equation} 
\textbf{Step 2:} \emph{Rates for $\{y_k\}_k$.} In our derivation, we require the following classical result about the rates of certain sequences, cf$.$ \cite[Lemma 1 and 4]{chung1954stochastic} and \cite[Section 2.2]{pol87}.\begin{lemma} \label{lem:generalized-Chung} Let $\{y_k\}_{k}\subseteq \R_{+}$ and $\beta \geq 0$, $s \in (0,1]$, $t > s$ be given with \[ y_{k+1} \leq \big[1- q\cdot (k+\beta)^{-s} \big] y_k + p\cdot (k+\beta)^{-t}, \quad \text{for some $p, q > 0$}. \]
\begin{enumerate}[label=\textup{(\alph*)},topsep=0pt,itemsep=0ex,partopsep=0ex, leftmargin = 25pt]
		\item If $s=1$ and $t< q+1$, then $ y_k \leq \frac{p}{q+1-t}\cdot (k+\beta)^{1-t} + o((k+\beta)^{1-t})$ as $k \to \infty$. \vspace{1mm}
		\item If $s< 1$, then $y_k \leq \frac{p}{q} \cdot (k+\beta)^{s-t} + o((k+\beta)^{s-t})$ as $k \to \infty$.
	\end{enumerate}
\end{lemma}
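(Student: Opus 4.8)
The plan is to treat both parts uniformly by a comparison (super-solution) argument, exploiting the fact that the stated conclusion $y_k \le C(k+\beta)^{-a} + o((k+\beta)^{-a})$ is exactly the assertion $\limsup_{k\to\infty}(k+\beta)^{a}y_k \le C$, where $(a,C) = (t-1,\tfrac{p}{q+1-t})$ in part (a) and $(a,C)=(t-s,\tfrac pq)$ in part (b). Hence it suffices to produce, for every fixed $\varepsilon>0$, a threshold beyond which $(k+\beta)^a y_k \le C+\varepsilon$, and then let $\varepsilon\downarrow0$.

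First I would pin down the constant $C$ by a formal ansatz $w_k := C_\varepsilon (k+\beta)^{-a}$ with $C_\varepsilon := C + \varepsilon$, and ask when $w$ is a \emph{super-solution}, i.e. $w_{k+1}\ge (1-q(k+\beta)^{-s})w_k + p(k+\beta)^{-t}$. Expanding $w_{k+1}-w_k = -aC_\varepsilon(k+\beta)^{-a-1}+\cO((k+\beta)^{-a-2})$ by Taylor's theorem and noting $q(k+\beta)^{-s}w_k = qC_\varepsilon(k+\beta)^{-t}$ (since $s+a=t$ in both cases), the residual has leading order $(k+\beta)^{-t}$. For $s<1$ the difference term lives at the strictly lower order $(k+\beta)^{-a-1}=(k+\beta)^{-t+s-1}$, so the balance identifies the critical constant $qC=p$, i.e. $C=\tfrac pq$; for $s=1$ the difference term also contributes at order $(k+\beta)^{-t}$ and the balance becomes $(q+1-t)C=p$, i.e. $C=\tfrac{p}{q+1-t}$, which is meaningful precisely because $t<q+1$ ensures positivity. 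In either case $C_\varepsilon$ exceeds the critical constant, so the leading residual coefficient is strictly positive and there is $k_0$ with $w_{k+1}\ge (1-q(k+\beta)^{-s})w_k + p(k+\beta)^{-t}$ and $1-q(k+\beta)^{-s}\in(0,1)$ for all $k\ge k_0$.

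Then I would close the argument by a dichotomy at the threshold $k_0$. If $y_{k_1}\le w_{k_1}$ for some $k_1\ge k_0$, the super-solution property of $w$ together with the sub-solution property of $y$ propagates $y_k\le w_k$ to all $k\ge k_1$ by a one-line induction, giving $(k+\beta)^a y_k \le C_\varepsilon$. Otherwise $y_k>w_k$ for every $k\ge k_0$; subtracting the two recursions (the common multiplier $1-q(k+\beta)^{-s}$ being nonnegative) shows $v_k:=y_k-w_k$ obeys $v_{k+1}\le (1-q(k+\beta)^{-s})v_k$, whence $v_k \le v_{k_0}\prod_{j=k_0}^{k-1}(1-q(j+\beta)^{-s})$. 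Bounding the product by $\exp(-q\sum_{j=k_0}^{k-1}(j+\beta)^{-s})$ and comparing the sum with an integral, the product decays like $\exp(-\tfrac{q}{1-s}(k+\beta)^{1-s})$ when $s<1$ (faster than any polynomial) and like $(k+\beta)^{-q}$ when $s=1$. Multiplying by $(k+\beta)^a$ then forces $(k+\beta)^a v_k\to 0$ — in the case $s=1$ precisely because $a-q=t-1-q<0$ — so again $\limsup_k (k+\beta)^a y_k \le C_\varepsilon$. Either branch yields the bound $C+\varepsilon$, and letting $\varepsilon\downarrow0$ completes both parts.

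The main obstacle is the second branch of the dichotomy, where one must quantify that the homogeneous contraction $\prod_{j}(1-q(j+\beta)^{-s})$ beats the polynomial weight $(k+\beta)^{a}$. This is exactly where the hypotheses bite: the divergence $\sum_k(k+\beta)^{-s}=\infty$ (valid for all $s\le1$) drives the product to zero, while in part (a) the sharp condition $t<q+1$ is what makes the resulting polynomial decay $(k+\beta)^{t-1-q}$ vanish — without it the comparison sequence would fail to dominate and the claimed constant $\tfrac{p}{q+1-t}$ would even cease to be positive. The remaining ingredients, namely the Taylor remainder estimate for the super-solution step and the integral comparison for the partial sums, are entirely routine.
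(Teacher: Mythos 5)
Your proposal is correct. Note first that the paper does not actually prove this lemma: it is stated as a classical result with a pointer to Chung \cite{chung1954stochastic} and Polyak \cite{pol87}, so there is no in-paper argument to match against. Your comparison-sequence proof is a valid, self-contained derivation. The super-solution computation is sound: with $a=t-s$ one has $q(k+\beta)^{-s}w_k=qC_\varepsilon(k+\beta)^{-t}$, and the expansion $w_{k+1}-w_k=-aC_\varepsilon(k+\beta)^{-a-1}+\cO((k+\beta)^{-a-2})$ correctly isolates the critical constants ($qC=p$ for $s<1$, $(q+1-t)C=p$ for $s=1$, the latter positive exactly under $t<q+1$), and the slack $\varepsilon>0$ gives a strictly positive leading residual so that $w$ dominates the recursion beyond some $k_0$. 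The dichotomy is exhaustive, the induction in the first branch uses only $1-q(k+\beta)^{-s}\ge0$, and in the second branch the product bound $\prod_j(1-q(j+\beta)^{-s})\le\exp(-q\sum_j(j+\beta)^{-s})$ combined with the integral comparison yields superpolynomial decay for $s<1$ and $(k+\beta)^{-q}$ decay for $s=1$, where $t-1-q<0$ closes the case. Finally, $\limsup_k(k+\beta)^a y_k\le C+\varepsilon$ for every $\varepsilon>0$ is indeed equivalent to the stated $o(\cdot)$ conclusion. The classical proofs cited by the paper instead work directly with the rescaled sequence $u_k=(k+\beta)^a y_k$ and derive a recursion for $u_k$ (or argue by contradiction on its $\limsup$); your route trades that algebra for a comparison principle plus a homogeneous-decay estimate, which is arguably more transparent about where the hypotheses $t<q+1$ and $\sum_k(k+\beta)^{-s}=\infty$ enter. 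The steps you leave as routine (Taylor remainder, integral test) are genuinely routine.
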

We now continue the proof of Step 2. We consider the two cases $\theta=\frac12$ and $\theta \in (\frac12,1)$. \\[1mm]
\noindent\textbf{Case I: $\theta=\frac12$.} The recursion \eqref{eq:thm-rate-y-3} simplifies to 
\[y_{k+1}  \leq (1-\sC_\lambda)y_k +  \sD k^{-2r}. \]
If $\sC_\lambda \geq 1$, then $y_{k+1} \leq \sD k^{-2r}$, implying $y_k = \cO(k^{-2r})$. If $\sC_\lambda < 1$, there exists $K \geq t$ such that $k^{-2r} \leq \frac{2-\sC_\lambda}{2(1-\sC_\lambda)} (k+1)^{-2r}$ for all $k\geq K$. We now make the following claim: for all $k \geq t$ it holds that
\[
y_k \leq \Big[\frac{2-\sC_\lambda}{\sC_\lambda(1-\sC_\lambda)} \cdot \sD + \frac{\max_{t \leq i \leq K}\, y_i}{K^{-2r}}\Big] k^{-2r} =: \sE k^{-2r}. 
\]  
We prove this claim inductively. Obviously, due to $\frac{2-\sC_\lambda}{\sC_\lambda(1-\sC_\lambda)} \geq 0$, it follows $y_k \leq \sE k^{-2r}$ for all $t \leq k \leq K$. Suppose that $y_k \leq \sE k^{-2r}$ holds for some $k > K$. Then, we have 
\[	y_{k+1}  \leq (1-\sC_\lambda) y_k + \sD k^{-2r} \leq [(1-\sC_\lambda) \sE + \sD]  k^{-2r} \leq \sE (k+1)^{-2r},
\]
where the second inequality uses $k^{-2r} \leq \frac{2-\sC_\lambda}{2(1-\sC_\lambda)} (k+1)^{-2r}$. This yields $y_{k} = \cO (k^{-2r})$. \\[1mm]
\noindent\textbf{Case II: $\theta\in(\frac12,1)$.}
Let us define $\mu:=\min\{r,\tfrac{1}{4\theta -2}\}$ and $\sD_\lambda :=[(2\theta-1)(\sC_\lambda \theta)]^{-1}$. Then, we can reformulate \eqref{eq:thm-rate-y-3} as follows
\begin{equation}
    \label{eq:thm-rate-y-4}\begin{aligned}
         y_{k+1} \leq y_k -  \sC_\lambda \big[y_k^{2\theta} -\sD_\lambda^{\frac{2\theta}{2\theta-1}} \cdot k^{-4\theta\mu} \big] +  \sD k^{-4\theta r}.
    \end{aligned}
\end{equation}
Since $\theta>\frac12,$ the function $x \mapsto h_\theta(x) := x^{2\theta}$ is convex on $\R_+$, i.e.,
	\[ h_\theta(y) - h_\theta(x) \geq 2\theta  x^{2\theta-1} (y-x) = 2\theta  x^{2\theta-1}y - 2\theta x^{2\theta} \quad \forall~x,y\in\R_{+}. 
	\]
Rearranging the terms in \eqref{eq:thm-rate-y-4} and using the convexity of $h_\theta$, we have
\[\begin{aligned}
	y_{k+1} &\leq y_k - \sC_\lambda \big[h_\theta(y_k) - h_\theta(\sD_\lambda^{\frac{1}{2\theta-1}} k^{-2\mu})\big] + \sD \cdot  k^{-4\theta r} \\ 
 & \leq \big[1-  2/(2\theta-1) \cdot  k^{-2\mu(2\theta-1)}\big]y_k + \big( 2\theta \sC_\lambda \sD_\lambda^{\frac{2\theta}{2\theta-1}} +  \sD \big) k^{-4\theta \mu},
\end{aligned}\]
where the second inequality utilizes $\mu \leq r$ and $\theta\sC_\lambda\sD_\lambda = (2\theta-1)^{-1}$. Notice that $2\mu(2\theta-1) \leq 1$ and $4\theta\mu - 2\mu(2\theta-1) < 2/(2\theta-1)$. Hence, \Cref{lem:generalized-Chung} is applicable and it follows $y_k = \cO(k^{-2\mu})$. In view of \textbf{Case I \& II}, we conclude that 
\begin{equation}\label{eq:thm-rate-y}
    y_k = \cO(k^{-\psi(\theta)}),\quad \text{where}\quad \psi(\theta):=\min\{2r,\tfrac{1}{2\theta-1}\}.
\end{equation}
\textbf{Step 3:} \emph{Transition to $\|\nabla f(x^k)\|$.}
By \eqref{eq:thm-rate-4} and using $y_k \geq 0$, we can infer
\[
{\tw} \|\nabla \mer(x^{\ti_k}, z^{\ti_k})\|^2\leq  100(1-\lambda)[y_k - y_{k+1}] \leq 100(1-\lambda)y_k= \cO(k^{-\psi(\theta)}).
\]
Thus, combining the gradient bound \cref{grad-ineq} and the $\sL$-smoothness of $f$, we obtain 
\begin{equation}
    \label{eq:thm-rates}
    \max\{\|x^{\ti_k}-z^{\ti_k}\|^2,\|\nabla f(x^{\ti_k})\|^2,\|\nabla f(z^{\ti_k})\|^2\}= \cO(k^{-\psi(\theta)}).
\end{equation}
As shown in \eqref{eq:thm-rate-v-2}, it holds that $s_k^2 = \cO(u_k) = \cO(k^{-2r})$. Hence, in light of \Cref{lem:iterate-bound} and using $\psi(\theta)\leq 2r$, it follows
\begin{equation}
    \label{eq:thm-rate-d}    d_k^2 \leq   \tfrac{3}{2}\|z^{\ti_k} - x^{\ti_k}\|^2 + \tfrac{15}{(1-\lambda)^2}(\tw^2 \|\nabla f(z^{\ti_k})\|^2 + s_{k}^2)  = \cO(k^{-\psi(\theta)}).
\end{equation}
By the definition of $\{d_k\}_k$ (cf.\ \eqref{def:d}) and applying \cref{Assumption:1}, we further have 
\begin{equation}
    \label{eq:thm-rate-grad-1}
    {\max}_{\ell \in \Ti_k} \|\nabla f(x^{\ell})\|^2 \leq 2\|\nabla f(x^{\ti_k})\|^2 + 2\sL^2 d_k^2 = \cO(k^{-\psi(\theta)}).
\end{equation}
Notice that this rate still depends on the time window and time indices $\{\ti_k\}_k$. Hence, our next step is to transfer the obtained results to the original time scale. 
Mimicking the derivations in \eqref{eq:thm-rate-v} and as $\{\alpha_k\}_k$ is non-increasing and $t$ is fixed, we have
%
\begin{equation}
    \label{eq:thm-transition}
    \tg_{\ti_{k+1}} = \tg_{\ti_t-1} + {\sum}_{i=t}^k \tg_{\ti_i,\ti_{i+1}} + \alpha_{\ti_{k+1}} \leq \tg_{\ti_t} + \tw(k+1-t) \leq \bar \sD k 
\end{equation}
for some $\bar\sD > 0$ and all $k$ sufficiently large. Let $\ell \in \N$ be arbitrary. By construction of $\{\ti_k\}_k$, there is $k \geq 1$ such that $\ell\in\Ti_k$. Thus, by \eqref{eq:thm-rate-grad-1}, we can infer $\|\nabla f(x^{\ell})\|^2 = \cO(k^{-\psi(\theta)})$ and, due to $\tg_\ell \leq \tg_{\ti_{k+1}} \leq \bar \sD k$, we finally obtain 
\begin{equation}
    \label{eq:thm-rate-grad-2}
     \|\nabla f(x^{\ell})\|^2 = \cO(\tg_\ell^{-\psi(\theta)})\quad \text{$\forall~\ell$ sufficiently large}.
\end{equation}
\textbf{Step 4:} \emph{Transition to $\{f(x^k)\}_k$.}
Since $y_k=\mer(x^{\ti_{k}},z^{\ti_{k}}) - f^* +u_k=f(z^{\ti_{k}}) - f^* + \zeta\|x^{\ti_{k}}-z^{\ti_{k}}\|^2 +u_k$, by utilizing the triangle inequality, we obtain
\begin{equation}
    \label{eq:thm-rate-func-1}
    |f(z^{\ti_{k}}) - f^*| \leq y_k + \zeta\|x^{\ti_{k}}-z^{\ti_{k}}\|^2 +u_k = \cO(k^{-\psi(\theta)}),
\end{equation}
where the equality holds thanks to \eqref{eq:thm-rate-v-2}, \eqref{eq:thm-rate-y}--\eqref{eq:thm-rates}. Next, we want to show $|f(x^{\ti_{k}}) - f^*|=\cO(k^{-\psi(\theta)})$. Restating \eqref{eq:prop-descent-use-later}, it holds for all $y_1,y_2\in\Rn$ that
\begin{equation}
    \label{eq:thm-descent-use-later}
    |f(y_1) - f(y_2)| \leq \tfrac{1}{2\sL}\max\{\|\nabla f(y_1)\|^2,\|\nabla f(y_2)\|^2\} + \sL\|y_1-y_2\|^2.
\end{equation}
Substituting $y_1=x^{\ti_{k}}$ and $y_2 = z^{\ti_{k}}$ in \eqref{eq:thm-descent-use-later} and using \eqref{eq:thm-rates}, this yields
\[
    |f(x^{\ti_{k}}) - f(z^{\ti_{k}})| \leq \tfrac{1}{2\sL}\max\{\|\nabla f(x^{\ti_{k}})\|^2,\|\nabla f(z^{\ti_{k}})\|^2\} + \sL\|x^{\ti_{k}}-z^{\ti_{k}}\|^2 = \cO(k^{-\psi(\theta)}),
\]
which, along with \eqref{eq:thm-rate-func-1}, further implies 
\begin{equation}
    \label{eq:thm-rate-func-2}
    |f(x^{\ti_{k}}) - f^*| \leq |f(z^{\ti_{k}}) - f^*| +  |f(x^{\ti_{k}}) - f(z^{\ti_{k}})| = \cO(k^{-\psi(\theta)}).
\end{equation}
As discussed before, for every integer $\ell \geq 1$, there is $k \geq 1$ such that $\ell\in\Ti_k$. Replacing $y_1=x^\ell$ and $y_2 = x^{\ti_k}$ in \eqref{eq:thm-descent-use-later}, this yields
\[
  |f(x^{\ell}) - f(x^{\ti_{k}})| \leq \tfrac{1}{2\sL}\max\{\|\nabla f(x^{\ell})\|^2,\|\nabla f(x^{\ti_{k}})\|^2\} + \sL d_k^2 = \cO(k^{-\psi(\theta)}),
\]
where we applied \eqref{eq:thm-rate-d}, \eqref{eq:thm-rate-grad-1}. Combining this with \eqref{eq:thm-rate-func-2} gives $
|f(x^{\ell}) - f^*| \leq |f(x^{\ell}) - f(x^{\ti_{k}})| + |f(x^{\ti_{k}}) - f^*| = \cO(k^{-\psi(\theta)}) $. Using \eqref{eq:thm-transition} and $\tg_\ell \leq \tg_{\ti_{k+1}}$, we have
\[|f(x^{\ell}) - f^*| = \cO(\tg_\ell^{-\psi(\theta)}).\]
\textbf{Step 5:} \emph{Rates for $\{x^k\}_k$.} In this step, we will work with the adjusted {\L}ojasiewicz exponent $\vartheta\in[\theta,1)$ such that $\vartheta > \frac{1}{2r}$ (which ensures the summability of $\{u_k^\vartheta\}_k$). Summing the recursion in \Cref{lem:kl-ineq} from $k=m$ to $n$ leads to
 \[
 {\sum}_{k=m}^n d_k \leq \sM \Psi_m + 3(1+2\mparam)\sC\tw {\sum}_{k=m}^n u_k^\vartheta,
 \]
 where $\Psi_k = \frac{1}{\sC(1-\vartheta)}[\mer(x^{\ti_{k}},z^{\ti_{k}}) - f(x^*) + u_k]^{1-\vartheta} = \frac{1}{\sC(1-\vartheta)} y_k^{1-\vartheta} = \cO(k^{-(1-\vartheta) \psi(\vartheta)})$ and $\sM = \frac{150(1+2\mparam)}{(1-\lambda)^3}$. Letting $n\to\infty$ and noting $u_k= \cO(k^{-2r})$ (by \eqref{eq:thm-rate-v-2}), we have 
 \begin{equation}
    \label{eq:thm-rate-sum-d-1}
 {\sum}_{k=m}^\infty d_k = \cO\Big(m^{-(1-\vartheta)\psi(\vartheta)} + {\sum}_{k=m}^\infty k^{-2r\vartheta} \Big) =  \cO(m^{-(1-\vartheta)\psi(\vartheta)} + m^{1-2r\vartheta}).
 \end{equation}
Since $\vartheta>0$ can be selected freely from $(\tfrac{1}{2r},1)\cap[\theta,1)$, to achieve the best rate given $r>\frac12$ and $\theta\in[\frac12,1)$, we solve the following constrained  optimization problem:
\begin{equation}
    \label{eq:thm-rate-max}
    {\max}_{\vartheta\in[\theta,1)}\; \varphi(\vartheta):=\min\{2r\vartheta-1, 2r(1-\vartheta),\tfrac{1-\vartheta}{2\vartheta-1}\} \quad \text{s.t.}\quad \vartheta > \tfrac{1}{2r}.
\end{equation}
Observe that $\phi(\vartheta) = 2r\vartheta -1$ if $\frac{1}{2r} <\vartheta \leq \frac{1+2r}{4r}$ and $\phi(\vartheta) = \frac{1-\vartheta}{2\vartheta-1}$ if $\frac{1+2r}{4r} < \vartheta < 1$. Clearly, $\varphi(\vartheta)$ increases in $(\frac{1}{2r},\frac{1+2r}{4r})$ and decreases in $(\frac{1+2r}{4r},1)$. Hence, in the case $\theta\geq \frac{1+2r}{4r}$, $\varphi$ attains the maximum at $\theta$; in the case $\frac12 \leq \theta <\frac{1+2r}{4r}$, the maximum is attained at $\frac{1+2r}{4r}$. Consequently, the solution $\vartheta^*$ of \eqref{eq:thm-rate-max} is given by 
\[ \vartheta^*=\begin{cases} \tfrac{1+2r}{4r} & \text{if}\; \frac12 \leq \theta <\frac{1+2r}{4r},\\ \theta  &\text{if}\;\frac{1+2r}{4r} \leq \theta <1, \end{cases}\quad \implies \quad \varphi(\vartheta^*)=\begin{cases} r-\tfrac12 & \text{if}\; \frac12 \leq \theta <\frac{1+2r}{4r}, \\ \tfrac{1-\theta}{2\theta-1} &\text{if}\;\frac{1+2r}{4r} \leq \theta <1,\end{cases} \]
i.e., $\vp(\vartheta^*) = \phi(\theta) := \min\{r-\frac12,\frac{1-\theta}{2\theta-1}\}$, and \eqref{eq:thm-rate-sum-d-1} reduces to ${\sum}_{k=m}^\infty d_k = \cO(m^{-\phi(\theta)})$. 
Recalling $x^k\to x^* \in \crit(f)$, we have $x^{\ti_k} \to x^*$ as $k\to\infty$ and 
\[
\|x^{\ti_m} - x^*\|\leq {\sum}_{k=m}^\infty \|x^{\ti_k} - x^{\ti_{k+1}} \| \leq {\sum}_{k=m}^\infty d_k = \cO(m^{-\phi(\theta)}).
\]
Analogous to our previous steps, for any index $\ell \geq 1,$ there is $m\geq 1$ such that $\ell \in \Ti_m$. Hence, it holds that 
\[
\|x^\ell - x^* \| \leq \|x^\ell - x^{\ti_m}\| + \|x^{\ti_m} - x^*\| \leq d_m + {\sum}_{k=m}^\infty d_k = \cO(m^{-\phi(\theta)}).
\]
Finally, the relation $\tg_\ell \leq \tg_{\ti_{m+1}} \leq \bar \sD m$ (by \eqref{eq:thm-transition}) yields $ \|x^\ell - x^* \| = \cO(\tg_\ell^{-\phi(\theta)})$. \\[1mm]
\noindent\textbf{Part (b).} \emph{Rates under $\theta=\frac12$, $g(x)=\frac{\exp(rx)}{x^p}$ and $r>0$, $p \geq 0$.} Substituting $\theta=\frac12$ in \eqref{eq:thm-rate-y-1} and noting that $g(x)=\frac{\exp(rx)}{x^p}$ and $u_k = o(g(\tg_k)^{-2})$ by \eqref{eq:thm-rate-v-1}, we have
 \begin{equation}\label{eq:p2-thm-recur-y}
    y_{k+1} \leq (1-\sC_\lambda)y_k  +  \tg_{\ti_k}^{2p}\cdot \exp(-2r\tg_{\ti_k}),\quad \text{where}\quad \sC_\lambda=\tfrac{\sC^2 \tw}{200(1-\lambda)}.
\end{equation}
If $\sC_\lambda \geq 1$, we have  $y_{k+1} \leq\tg_{\ti_k}^{2p} \exp(-2r\tg_{\ti_k}) = g(\tg_{\gamma_k})^{-2}$. Next, we consider $\sC_\lambda <1$. 
Since $\alpha_k \to 0$, there is $\tilde k \geq 1$ such that $\alpha_k \leq \tw$ for all $k \geq \tilde k$.
By \Cref{lem:time-length}, we further have for all $k \geq i \geq  \max\{\bar k,\tilde k, K_\delta\}$ that 
\begin{equation}\label{eq:p2-thm-time}
    \tg_{\ti_k} - \tg_{\ti_i} = {\sum}_{j=\ti_i+1}^{\ti_k}\;\alpha_j  \leq \alpha_{\ti_k}+ \tg_{\ti_i,\ti_k} \leq \tw + \tw(k-i).
\end{equation}
Dividing \eqref{eq:p2-thm-recur-y} by $(1-\sC_\lambda)^{k+1}$ gives
$
 \frac{y_{k+1}}{(1-\sC_\lambda)^{k+1}} \leq \frac{y_k}{(1-\sC_\lambda)^k}  +  \frac{\tg_{\ti_k}^{2p}\cdot \exp(-2r\tg_{\ti_k})}{(1-\sC_\lambda)^{k+1}}. 
$
Summing this estimate for $i = t,\dots,k$, $t \geq  \max\{\bar k,\tilde k, K_\delta\}$, and multiplying $(1-\sC_\lambda)^{k+1}$ back on both sides, we obtain 
\[\begin{aligned}
    y_{k+1} &\leq (1-\sC_\lambda)^{k-t+1}   y_t + {\sum}_{i=t}^k \tg_{\ti_i}^{2p} \exp(-2r\tg_{\ti_i})(1-\sC_\lambda)^{k-i}\\
    & \hspace{-0ex}\leq  (1-\sC_\lambda)^{k-t+1}   y_t + \tg_{\ti_k}^{2p} \exp(-2r\tg_{\ti_k}) {\sum}_{i=t}^k  \exp(2r(\tg_{\ti_k}-\tg_{\ti_i})) (1-\sC_\lambda)^{k-i}\\
    &\hspace{-0ex}\leq  (1-\sC_\lambda)^{k-t+1}   y_t + \tg_{\ti_k}^{2p} \exp(-2r\tg_{\ti_k}) {\sum}_{i=t}^k  \exp(2r\tw) [\exp(2r\tw)(1-\sC_\lambda)]^{k-i},
\end{aligned}\]
where the second line uses $\tg_{\ti_k} \geq \tg_{\ti_i}$ and the last line is due to \eqref{eq:p2-thm-time}. The condition $r<\sC^2/400$ implies $\exp(2r\tw)(1-\sC_\lambda)<1$ and hence, we have $(1-\sC_\lambda) < \exp(-2r\tw)$ and ${\sum}_{i=t}^k  \exp(2r\tw) [\exp(2r\tw)(1-\sC_\lambda)]^{k-i} \leq \exp(2r\tw)/[1-\exp(2r\tw)(1-\sC_\lambda)] =:\bar \sC_\lambda$. Thus, due to \eqref{eq:p2-thm-time}, it follows
\[
\begin{aligned}
    y_{k+1} &\leq  \exp(-2r\tw(k-t+1))  y_t  + \bar \sC_\lambda g(\tg_{\gamma_k})^{-2} \\
    &\leq \exp(-2r\tg_{\ti_k}) \cdot \exp(2r\tg_{\ti_t}) y_t  + \bar \sC_\lambda g(\tg_{\gamma_k})^{-2}.
\end{aligned}
\]
Since $\exp(2r\tg_{\ti_t}) y_t$ and $\bar \sC_\lambda$ are fixed, this yields 
$y_{k+1} = \cO(g(\tg_{\gamma_k})^{-2})$. By \eqref{eq:p2-thm-time}, we have $\tg_{\ti_{k+1}} - \tg_{\ti_k} \leq 2\tw$, which implies $\exp(-2r\tg_{\ti_k}) \leq \exp(4r\tw)  \exp(-2r\tg_{\ti_{k+1}})$. Consequently, due to the monotonicity of $\{\tg_{\ti_k}\}_k$, we can infer
\begin{equation}\label{eq:p2-thm-y}
    y_{k+1} = \cO(g(\tg_{\gamma_{k+1}})^{-2})\quad \Longleftrightarrow \quad y_{k} = \cO(g(\tg_{\gamma_k})^{-2}).
\end{equation}
Based on \eqref{eq:p2-thm-y} and $u_k = o(g(\tg_{\gamma_k})^{-2})$, we can obtain the desired rates by fully repeating the procedures used in \textbf{Step 3--5}. We will omit further details. 	
\end{document}